\renewcommand{\AA}{\mathbf{A}}
\newcommand{\DD}{\mathbf{D}}
\newcommand{\CC}{\mathbf{C}}
\newcommand{\GG}{\mathbf{G}}
\newcommand{\PP}{\mathbf{P}}
\newcommand{\QQ}{\mathbf{Q}}
\newcommand{\ZZ}{\mathbf{Z}}
\newcommand{\fN}{\mathfrak{N}}
\newcommand{\fp}{\mathfrak{p}}
\newcommand{\fpb}{\bar{\mathfrak{p}}}
\newcommand{\fP}{\mathfrak{P}}
\newcommand{\cD}{\mathcal{D}}
\newcommand{\cE}{\mathcal{E}}
\newcommand{\cF}{\mathcal{F}}
\newcommand{\cK}{\mathcal{K}}
\newcommand{\cL}{\mathcal{L}}
\newcommand{\cM}{\mathcal{M}}
\newcommand{\cO}{\mathcal{O}}
\newcommand{\cR}{\mathcal{R}}
\newcommand{\cV}{\mathcal{V}}
\newcommand{\cW}{\mathcal{W}}
\newcommand\ac{\mathrm{ac}}
\newcommand{\ab}{\mathrm{ab}}
\newcommand{\et}{\text{\textup{\'et}}}
\newcommand{\f}{\mathrm{f}}
\newcommand{\cris}{\mathrm{cris}}
\newcommand{\rig}{\mathrm{rig}}
\DeclareMathOperator{\Gal}{Gal}
\DeclareMathOperator{\GL}{GL}
\DeclareMathOperator{\Ind}{Ind}
\DeclareMathOperator{\loc}{loc}
\DeclareMathOperator{\Res}{Res}
\DeclareMathOperator{\CH}{CH}
\DeclareMathOperator{\mom}{mom}
\DeclareMathOperator{\Sym}{Sym}
\DeclareMathOperator{\TSym}{TSym}
\DeclareMathOperator{\Hom}{Hom}
\DeclareMathOperator{\Sh}{Sh}
\newcommand{\Qb}{\overline{\QQ}}
\newcommand{\Qp}{\QQ_p}
\newcommand{\Zp}{\ZZ_p}
\newcommand{\Zpnh}{\widehat{\ZZ}_p^{\mathrm{nr}}}
\newcommand{\Qpnh}{\widehat{\QQ}_p^{\mathrm{nr}}}
\newcommand{\wM}{\widetilde{\cM}}
\newcommand{\wD}{\widetilde{\cD}}
\newcommand{\OK}{\cO_{\cK}}
\newcommand{\Dcris}{\DD_{\cris}}
\newcommand{\Drig}{\DD^{\dag}_{\rig}}
\newcommand{\BDP}{L_{\fp}^{\mathrm{BDP}}}
\newcommand{\htimes}{\mathop{\hat\otimes}}
\newcommand{\veps}{\varepsilon}
\renewcommand{\ge}{\geqslant}
\renewcommand{\le}{\leqslant}
\newcommand{\into}{\hookrightarrow}
\newcommand{\tp}[1]{\texorpdfstring{$#1$}{#1}}
\newcommand{\mtwo}[4]{
 \left(
 \begin{smallmatrix}#1&#2\\#3&#4
 \end{smallmatrix}
 \right)
}
\newtheorem{definition}[subsubsection]{\bf Definition}
\newtheorem{corollary}[subsubsection]{\bf Corollary}
\newtheorem{lemma}[subsubsection]{\bf Lemma}
\newtheorem{theorem}[subsubsection]{\bf Theorem}
\newtheorem{proposition}[subsubsection]{\bf Proposition}
\newtheorem{notation}[subsubsection]{\bf Notation}
\newtheorem{maintheorem}{Theorem}
\theoremstyle{definition}
\newtheorem{rmk}[subsubsection]{\it Remark}
\author{Dimitar Jetchev}
\address{Dimitar Jetchev, Ecole Polytechnique F\'ed\'erale de Lausanne, EPFL SB MATHGEOM GR-JET, Switzerland}
\email{dimitar.jetchev@epfl.ch}
\author{David Loeffler}
\address{David Loeffler, Mathematics Institute\\
 University of Warwick\\
 Coventry CV4 7AL, UK.}
\email{d.a.loeffler@warwick.ac.uk}
\urladdr{\href{http://orcid.org/0000-0001-9069-1877}{0000-0001-9069-1877}}
\author{Sarah Livia Zerbes}
\address{Sarah Livia Zerbes, Department of Mathematics\\
 University College London\\
 London WC1E 6BT, UK.}
\email{s.zerbes@ucl.ac.uk}
\urladdr{\href{http://orcid.org/0000-0001-8650-9622}{0000-0001-8650-9622}}
\thanks{Supported by the following grants: Swiss National Science Foundation grant PP00P2-144658 (Jetchev); Royal Society University Research Fellowship ``$L$-functions and Iwasawa theory'' (Loeffler); ERC Consolidator Grant ``Euler systems and the Birch--Swinnerton-Dyer conjecture'' (Zerbes).}
\title{Heegner Points in Coleman Families}
\begin{document}


\begin{abstract}
 We construct two-parameter analytic families of Galois cohomology classes interpolating the \'etale Abel--Jacobi images of generalised Heegner cycles, with both the modular form and Gr\"ossencharacter varying in $p$-adic families.
\end{abstract}

\maketitle

\section{Introduction}

 \subsection{Statement of the theorems}

 Let $f$ be a cuspidal modular newform of level $\Gamma_1(N)$ and weight $\ge 2$, and $\cK$ an imaginary quadratic field in which all primes dividing $N$ are split (the ``Heegner condition''). The landmark article of Bertolini--Darmon--Prasanna \cite{BDP} defines a family of algebraic cycles -- \emph{generalised Heegner cycles} -- associated to twists of $f$ by suitable algebraic Gr\"ossencharacters of $\cK$. This construction includes, as special cases, the familiar Heegner points of Gross--Zagier \cite{gross-zagier} and Kolyvagin \cite{kolyvagin:euler_systems}
 and the Heegner cycles studied by Schoen \cite{schoen1986}, Nekov\'{a}\v{r} \cite{nekovar:kolyvagin}, and others. We refer to the pairs $(f, \chi)$ to which this construction applies as \emph{Heegner pairs}; the conditions on $(f, \chi)$ are recalled in \S\ref{sect:heegnerpairs} below.

 Via the \'etale Abel--Jacobi map, the generalised Heegner cycle for a Heegner pair $(f, \chi)$ gives rise to a class in the cohomology of the conjugate-self-dual Galois representation $V_p(f)^*\otimes \chi$, where $V_p(f)$ is Deligne's $p$-adic representation associated to $f$, and $V_p(f)^* = \Hom(V_p(f), \Qb_p)$. If we assume $p \nmid N$, then these Galois representations naturally vary in 2-dimensional $p$-adic families: one dimension comes from varying $\chi$, and a second dimension comes from varying the modular form $f$ in a Hida or Coleman family. So it is natural to ask whether these \'etale generalised Heegner classes interpolate $p$-adically in these families.

 More precisely, we define a \emph{$p$-stabilised Heegner pair} (see \S \ref{sect:coleman}) to be a triple $(f, \alpha, \chi)$, where $(f, \chi)$ is a Heegner pair, and $\alpha$ is a root of the Hecke polynomial of $f$ at $p$. We introduce in Definition \ref{def:Keigenvar} a 2-dimensional $p$-adic rigid space $\cE_{\cK}(\fN)$, which is a fibre bundle over the level $N$ Coleman--Mazur eigencurve $\cE(N)$; pairs $(f, \alpha)$ determine points on $\cE(N)$, and the covering $\cE_{\cK}(\fN)$ parametrises the additional choice of the character $\chi$. By construction, $p$-stabilised Heegner pairs $(f, \alpha, \chi)$ give rise to points of $\cE_{\cK}(\fN)$, lying over the point $(f, \alpha)$ of $\cE(N)$. We shall show that Heegner classes can be interpolated over neighbourhoods of classical points in $\cE_{\cK}(\fN)$.

 We choose a point of $\cE(N)$ corresponding to a $p$-stabilised eigenform $(f_0, \alpha_0)$ which is \emph{noble} in the sense of \cite{hansen}. Then we can find a neighbourhood $V \ni (f_0, \alpha_0)$ in $\cE(N)$ which is smooth, and \'etale over weight space, corresponding to a Coleman family $\cF$ passing through $(f_0, \alpha_0)$; and a free rank 2 $\cO(V)$-module $M_{V}(\cF)$ with an action of $\Gal(\Qb/\QQ)$ that interpolates the Galois representations $V_p(f)$ for all classical points $(f, \alpha)$ of $V$ (including $(f_0, \alpha_0)$). Let $\tilde V$ be the preimage of $V$ in $\cE_{\cK}(\fN)$, and $\kappa_{\tilde V}$ the universal family of characters over $\tilde V$.

 \begin{maintheorem}
  If $V$ is small enough, there exists a rigid-analytic family of cohomology classes
  \[ z_{\cF} \in H^1(\cK, M_{V}(\cF)^* \otimes \kappa_{\tilde V}), \]
  the ``big Heegner class'', whose specialisation at any point of $\tilde V$ corresponding to a $p$-stabilised Heegner pair $(f, \alpha, \chi)$ is an explicit scalar multiple depending on $\alpha$, of the generalised Heegner class associated to $(f, \chi)$.
 \end{maintheorem}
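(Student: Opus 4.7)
My plan is to build $z_{\cF}$ in two coupled interpolation steps: first, vary the modular form along $\cF$ using overconvergent \'etale cohomology on a modular curve; then encode the variation of $\chi$ via a universal character over the anticyclotomic-type tower parametrised by $\tilde V \to V$.

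First I would construct the geometric source. On a strict neighbourhood of the ordinary locus in a modular curve $Y = Y_1(N) \cap Y_0(p^s)$, with $s$ chosen so that the slope of $\alpha_0$ lies in the range where the overconvergent \'etale cohomology machinery of Andreatta--Iovita--Stevens applies, there is a family of \'etale sheaves whose $H^1$ interpolates $V_p(f)^*$ along the classical specialisations of $\cF$. Shrinking $V$ if needed, one obtains the free $\cO(V)$-module $M_V(\cF)$ of the statement as a direct summand cut out by the $U_p$-eigenvalue $\alpha$, together with a universal cohomology class on $Y$ with coefficients in the big sheaf.

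Next, the Heegner hypothesis furnishes, for each order $\cO \subset \OK$ of conductor coprime to $Np$, a CM point $y_{\cO}$ of $Y$ defined over the ring class field of $\cK$ of conductor $\cO$, corresponding to a CM elliptic curve $A_{\cO}$ with $\Gamma_1(N)$- and $\Gamma_0(p^s)$-level structure provided by the splitting of $N$ and of $p$ in $\cK$. I would pull back the universal class to $y_{\cO}$, using that at a CM point the AIS sheaf admits a canonical trivialisation coming from the CM splitting of the relative Tate module of $A_{\cO}$, to obtain a cohomology class in $H^1(H_{\cO}, M_V(\cF)^*)$. Twisting by the universal character $\kappa_{\tilde V}$ of $\Gal(\Qb/\cK)$ carried by $\tilde V$, and taking a suitable $U_p$-eigen corestriction along a tower of ring class fields, yields the candidate $z_{\cF} \in H^1(\cK, M_V(\cF)^* \otimes \kappa_{\tilde V})$.

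Finally, I must verify the interpolation formula. At a classical point $(f, \alpha, \chi) \in \tilde V$, the specialisation map of the AIS family identifies $M_V(\cF)^*$ with $V_p(f)^*$ and $\kappa_{\tilde V}$ with $\chi$, so the specialisation of $z_{\cF}$ lives in the correct cohomology group, and must be compared with the \'etale Abel--Jacobi class of the classical generalised Heegner cycle of Bertolini--Darmon--Prasanna on the Kuga--Sato variety $W_k \times A_{\cO}^{k-2}$. I expect this comparison to be the main obstacle: one must match the overconvergent class, defined via the local trivialisation of the AIS sheaf at the CM point, with the algebraic cycle-class realisation, and compute the exact scalar factor depending on $\alpha$ produced by replacing the full Hecke action by its $U_p$-eigen projection. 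This parallels the $p$-stabilisation computations in the Beilinson--Flach and Asai--Flach constructions of Loeffler--Zerbes and collaborators, and is the step in which most of the real work is concentrated.
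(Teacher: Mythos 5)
Your proposal captures the broad architecture of the paper's argument: overconvergent \'etale cohomology \`a la Andreatta--Iovita--Stevens on a modular curve of prime-to-$p$ level times $\Gamma_0(p)$, CM points defined over ring class fields of $p$-power conductor, and a comparison with the BDP cycle class at classical specialisations. However, there are two issues, one cosmetic and one that is a genuine gap.

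The cosmetic point first: you say you would ``pull back the universal class to $y_{\cO}$''. The Heegner class is constructed the other way round: the CM locus $S_m$ sits inside the modular curve as a codimension-one subscheme, and the class is a Gysin \emph{pushforward} $\delta_{m*}(\mathbf{e}_{U,j,m})$ of a distinguished section $\mathbf{e}_{U,j,m}$ of $\delta_m^*(D_{U,n})$ over $S_m$, landing in $H^2_{\et}$ with coefficients in the big sheaf, after which Hochschild--Serre extracts the desired class in $H^1(F_m, -)$. Your ``canonical trivialisation coming from the CM splitting'' is the right intuition for where $\mathbf{e}_{U,j,m}$ comes from, but the paper makes this precise by the explicit formula ``evaluate a power series at the CM coordinate $p^m\sigma(\tau^*)$'', which lies in the integral lattice $D^\circ_{U,n}$; this integrality is essential for the next point.

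The genuine gap is in the step ``taking a suitable $U_p$-eigen corestriction along a tower of ring class fields''. In a \emph{Hida} family the $U_p$-eigenvalue $\alpha_{\cF}$ is a unit, so the classes $\alpha_{\cF}^{-m}\operatorname{cores}_{F_m}^{F_1}(z_m)$ converge $p$-adically, and one lands in $H^1$ with Iwasawa-algebra coefficients. In a \emph{finite-slope} Coleman family the factor $\alpha_{\cF}^{-m}$ grows without bound, and there is no a priori reason the ring-class classes should decay to compensate. This is exactly where the main technical input of the paper lives: the congruence lemma shows that the alternating binomial combination $\sum_{j=0}^h (-1)^j \binom{\nabla-j}{h-j}\mathbf{e}_{U,j,m}$ lies in $Cp^{hm}D^\circ_{U,n}$, uniformly in $m$, which is what lets one feed the tower of classes into the abstract interpolation machinery of \cite[Prop.~2.3.3]{loeffler-zerbes:coleman} to produce a class with coefficients in the module $D_\lambda(\Gamma^{\ac}_1)$ of distributions of bounded order $\lambda = v_p(\alpha_{\cF}^{-1})$, rather than measures. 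Simultaneously, the overconvergent projector $\Pi_j$ handles the interpolation across the different $\infty$-types $(k-j,j)$ at the cost of a controlled binomial denominator. Your plan as written would only produce a well-defined $z_{\cF}$ in the ordinary case; for the non-ordinary case one must isolate and prove these growth estimates, and this --- not the comparison with the algebraic cycle class, which is built into the construction via the moment maps $\mom^k(\mathbf{e}_{U,j,m}) = e_m^{[k-j,j]}$ --- is where the real work is concentrated.
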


 Note that we make no assumption on the local behaviour of $p$ in $\cK$: it can be split, inert, or ramified. The proof of Theorem A relies heavily on the techniques developed by two of us in the paper \cite{loeffler-zerbes:coleman} to interpolate Euler systems for $\GL_2 \times \GL_2$ in Coleman families. As in \emph{op.cit.}, the main tool is the theory of \emph{overconvergent \'etale cohomology} introduced by Andreatta--Iovita--Stevens \cite{andreatta-iovita-stevens:overconvES}.

 In \S \ref{sect:selmer}, we study the local properties of the Heegner class $z_{\cF}$, and show that it is naturally a section of a ``Selmer sheaf'' over $\tilde{V}$, defined using Pottharst's theory of trianguline Selmer complexes. It would be interesting to attempt to formulate (and maybe even to prove) an Iwasawa main conjecture in this context; we have not attempted to do this here, for reasons of space, but it will be treated in forthcoming work by members of our research groups.

 In the final two sections of the paper, we impose the additional assumption that $p$ splits in $\cK$. In this case, Bertolini, Darmon and Prasanna have defined a ``square root $p$-adic $L$-function'' $\BDP(f)$, interpolating square roots of central $L$-values $L(f, \chi^{-1}, 1)$ for varying $\chi$. We show in Theorem \ref{thm:B2} that the BDP $L$-functions for varying $f$ interpolate to a 2-parameter $p$-adic $L$-function $\BDP(\fN)$ on $\cE_\cK(\fN)$. Our final main result relates this ``analytic'' $p$-adic $L$-function to the Heegner class in Galois cohomology from Theorem A:

 \begin{maintheorem}
  If $V$ is sufficiently small, then the image of the class $z_{\cF}$ under the Perrin-Riou regulator map is the restriction to $\tilde V$ of the $p$-adic $L$-function $\BDP(\fN)$.
 \end{maintheorem}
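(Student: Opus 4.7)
The plan is to reduce Theorem B to an assertion about the values of two rigid-analytic functions on $\tilde V$ at a Zariski-dense set of classical points, and then to verify the identity pointwise using the main $p$-adic Waldspurger formula of Bertolini--Darmon--Prasanna. Concretely, both the image of $z_{\cF}$ under the Perrin-Riou regulator and the restriction of $\BDP(\fN)$ are, by construction, sections of the same line of rigid analytic functions on $\tilde V$ (after possibly shrinking $V$); so it suffices to show that they agree at a sufficiently large collection of points, say the locus $S \subset \tilde V$ corresponding to $p$-stabilised Heegner triples $(f, \alpha, \chi)$ where $\chi$ has infinity type lying in the ``geometric range'' of the BDP construction, i.e.\ outside the range of classical interpolation of $\BDP(f)$.

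First I would verify Zariski density. Since $\tilde V$ is smooth of dimension $2$, fibred over the smooth curve $V \subset \cE(N)$ with one-dimensional fibres parametrising Gr\"ossencharacters, and since noble classical points are dense in $V$ and characters of the appropriate infinity type form a Zariski-dense subset of each fibre, the set $S$ is Zariski dense in $\tilde V$. Next I would verify the pointwise identity at each $(f, \alpha, \chi) \in S$: by Theorem A, the specialisation of $z_{\cF}$ at this point is an explicit $\alpha$-dependent scalar multiple of the \'etale Abel--Jacobi image of the generalised Heegner cycle $\Delta_{f,\chi}$; by the compatibility of the Perrin-Riou big logarithm with the Bloch--Kato dual exponential (resp.\ logarithm, according to the position of $\chi$ relative to the Hodge--Tate weights of $V_p(f)^*$) at classical specialisations; and by the main $p$-adic Gross--Zagier/Waldspurger identity of \cite{BDP}, the resulting quantity equals the corresponding scalar multiple of the value $\BDP(f)(\chi)$. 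Matching up the $\alpha$-factors from Theorem A with the local Euler factor appearing in the Perrin-Riou regulator gives exactly the value of $\BDP(\fN)$ at the chosen point, as recorded in the construction in Theorem \ref{thm:B2}.

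The main obstacle I expect is bookkeeping of normalisations: one has to show that the scalar $c(\alpha)$ appearing in Theorem A, the local Euler factors built into the two-variable Perrin-Riou regulator over $\cE_{\cK}(\fN)$, and the interpolation factors in the construction of $\BDP(\fN)$ all assemble into the same function of $(f, \alpha, \chi)$. This is essentially a careful comparison of the trianguline $(\varphi, \Gamma)$-module structure on $\Drig(M_V(\cF)^*\otimes \kappa_{\tilde V})$ at classical points with the crystalline data entering the BDP formula, together with the fact that the interpolation of the regulator map over Coleman families (as constructed in \cite{loeffler-zerbes:coleman}) is compatible with base change along the characters $\kappa_{\tilde V}$. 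Once this matching is done, rigid-analytic continuation from $S$ to $\tilde V$ completes the proof.
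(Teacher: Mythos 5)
Your proposal is correct and follows essentially the same route as the paper: construct the ``motivic'' $p$-adic $L$-function as the pairing of $\cL_{\fp,\wM}(\loc_\fp z_{\cF})$ against the family period $\widetilde\omega_{\cF}$, compute its value at classical crystalline Heegner points via Theorem A and the specialisation property of the regulator to the Bloch--Kato logarithm, match this against $\BDP(f)(\chi)$ using the $p$-adic Gross--Zagier formula of \cite[Theorem~5.13]{BDP} (after cancelling the partial Euler factors from $\cE_p(f_\alpha,\chi)$ and $\cE_\fp$), and conclude by Zariski density. The one point to sharpen is that the pointwise comparison must be carried out at \emph{crystalline} points of $\Sigma^{(1)}$, since that is the setting of Theorem~\ref{thm:explrecip}; these are still Zariski dense, so the argument goes through.
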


 (In contrast to Theorem A, the assumption that $p$ split in $\cK$ is essential for our methods here. Andreatta and Iovita \cite{AI-katz} have recently constructed an analogue of the BDP $L$-function of an individual cuspform $f$ when $p$ is inert or ramified in $\cK$, and it would be interesting to investigate whether this can be extended to allow $f$ varying in Coleman families; we understand that this question will be treated in forthcoming work of their research groups.)

 \subsection{Relation to earlier work} Theorem A extends earlier work of a number of authors. In particular, if $\cF$ is an ordinary Coleman family (i.e.~a Hida family), then Howard \cite{howard:variation} has constructed a family of cohomology classes $z_{\cF}$ interpolating the Heegner points at \emph{weight 2} specialisations of $\cF$. It is, however, not clear from Howard's construction whether the specialisations of the family $z_{\cF}$ at higher-weight classical points $(f, \chi)$ coincide with the generalised Heegner cycles, which is the content of Theorem A in this case.

 This compatibility is known if $\chi$ has $\infty$-type $(\tfrac{k}{2}, \tfrac{k}{2})$ and $p$ is split in $\cK$, by work of Castella \cite{castella:padicvar}. Castella has informed us that his method could also be made to work for $\chi$ of more general $\infty$-type; but the restriction to $p$ split is fundamental, since his strategy involves \emph{deducing} Theorem A from a version of Theorem B for Hida families proved in \cite{castellahsieh}, rather than the other way around as in our approach. A direct proof of Theorem A for ordinary families, without assuming $p$ split, can be extracted from recent work of Disegni \cite{disegni}; Disegni's methods are rather closer in spirit to those of the present paper.

 The case of non-ordinary Coleman families does not seem to have received so much attention hitherto. As far as we are aware, the only result in this direction prior to the present work is in unpublished work of Kobayashi \cite{kobayashi18} , which gives a one-parameter family of cohomology classes interpolating the Heegner classes for $(f, \chi)$, for $f$ a fixed, possibly non-ordinary eigenform and varying $\chi$ (assuming $p$ is split in $\cK$ and $f$ has trivial character). This result can be recovered from Theorem A by specialising our family $z_{\cF}$ to the fibre of $\tilde V$ over the point of $V$ corresponding to $f_\alpha$. \medskip

 \noindent\emph{Note}: Shortly after the present paper was first uploaded to the arXiv in June 2019, two other papers appeared focussing on the same problem. The preprint \cite{BL19} of B\"uy\"ukboduk and Lei extends Kobayashi's work to Coleman families, thus giving an alternative proof of Theorem A in the split-prime case. Their proof is significantly different from ours: as with Castella's work in the ordinary case, they first prove an analogue of Theorem B and then deduce Theorem A from it.

 The paper \cite{ota20} proves the same result as our Theorem A in the case of $p$ split in $\cK$ and $\cF$ a Hida family, assumptions similar to those of \cite{castella:padicvar} although the result is stronger than that of \emph{op.cit.} (weaking the assumptions on $\cF$ and allowing all $\infty$-types of $\chi$). Ota's methods are rather different from Castella's, and appear to be somewhat closer to those of \cite{disegni} and the present paper.

 \subsection{Potential generalisations} Our result raises several natural questions which we hope will be addressed in future work. Firstly, although we have imposed a strong Heegner hypothesis which allows us to work with classical modular curves, the construction should extend straightforwardly to Heegner points arising from quaternionic Shimura curves attached to (indefinite) quaternion division algebras over $\QQ$. It should also be possible to extend the construction to Shimura curves over totally real fields; Disegni's results for ordinary families already apply in this generality, and we hope that it may be possible to extend our results to treat the non-ordinary case.

 One can also consider whether the cohomology classes $z_{\cF}$ glue together for different $\cF$'s. The space $\tilde V$ is a small local piece of an eigenvariety for the quasi-split unitary group $\operatorname{GU}(1, 1)$ associated to $\cK$, and it is natural to ask whether the family $z_{\cF}$ can be ``globalised'' over the whole eigenvariety. A result of this kind for Kato's Euler system has been announced in a preprint of Hansen \cite{hansen}, and it would be interesting to generalise this to the Heegner setting.

 The original motivation for this project was to study the behaviour of Heegner classes in the neighbourhood of a critical-slope Eisenstein series $f$. Here the eigenspace in classical \'etale cohomology associated to $f$ is 1-dimensional, and the projection of the Heegner class to this eigenspace is trivially zero; but the eigenspace in overconvergent \'etale cohomology is larger, and we hope that it may be possible to obtain interesting cohomology classes by projecting our families $z_{\cF}$ to these ``shadow'' Eisenstein eigenspaces.

 \subsection{Acknowledgements} This project was begun while the authors were participants in the semester ``Euler Systems and Special Values of L-functions'' at the Bernoulli Centre, EPFL, Switzerland. We are grateful to the Centre Bernoulli Centre for its hospitality.

\section{Notation and conventions}

 \subsection{Gr\"ossencharacters}
  \label{sect:grossen}

  We recall the setting in which Heegner points and Heegner cycles are considered, following \cite[\S 4.1]{BDP}. We shall fix an integer $N$, which will be the prime-to-$p$ part of the level of the modular forms we shall consider.

  Let $\cK$ be an imaginary quadratic field satisfying the classical Heegner hypothesis, that all primes $q \mid N$ are split\footnote{One could also allow some primes $q\mid N$ to ramify in $\cK$ subject to assumptions on the epsilon-factors, as in \cite{BDP}, but for simplicity we shall stick to the above setting.} in $\cK$. This hypothesis implies that there exist ideals $\fN \triangleleft \OK$ such that $\OK / \fN \cong \ZZ /N\ZZ$. We shall choose such an ideal $\fN$. For any integer $c$ coprime to $N$, there is an isomorphism $\cO_c / (\fN \cap \cO_c) \cong \ZZ /N\ZZ$, where $\cO_c$ is the $\OK$-order of conductor $c$. So we can regard any Dirichlet character $\veps$ modulo $N$ as a character of $(\cO_c / \fN \cap \cO_c)^\times$, and hence of the profinite completion $\widehat{\cO}_c^\times$.

  \begin{definition}
   An \emph{algebraic Gr\"ossencharacter of $\cK$ of finite type $(c, \fN, \veps)$ and $\infty$-type\footnote{Our conventions for $\infty$-types are consistent with \cite[\S 4.1]{BDP}, but note that some other references would define this to be $\infty$-type $(-a, -b)$.} $(a, b)$} is a continuous homomorphism $\chi: \AA_{\cK, \f}^\times \to \Qb^\times$ satisfying $\chi(x) = x^a \bar{x}^b$ for $x \in \cK^\times$, whose conductor is divisible by $c$, and whose restriction to $\widehat{\cO}_c^\times$ is $\veps^{-1}$.
  \end{definition}

  The conditions imply that the conductor of $\chi$ is exactly $c \fN_{\veps}$, where $\fN_{\veps}$ is the unique factor of $\fN$ whose norm is the conductor of $\veps$.

  \begin{rmk}
   Gr\"ossencharacters of finite type $(c, \fN, \veps)$ and $\infty$-type $(a, b)$ exist if and only if $\veps(u \bmod \fN) = u^{-a} \bar{u}^{-b}$ for all $u \in \cO_c^\times$. If $\cO_c^\times = \{\pm 1\}$ (which is true in virtually all cases, i.e. all except the case when $c = 1$ and $\cK$ is either $\QQ(\sqrt{-1})$ or $\QQ(\sqrt{-3})$), this reduces to the condition $\veps(-1) = (-1)^{a + b}$.
  \end{rmk}

 \subsection{Heegner pairs}
  \label{sect:heegnerpairs}

  \begin{definition}
   A \emph{Heegner pair} of finite type $(c, \fN, \veps)$ and $\infty$-type $(a,b)$ is a pair $(f, \chi)$, where
   \begin{itemize}
    \item $\chi$ is an algebraic Gr\"ossencharacter of finite type $(c, \fN, \veps)$ and $\infty$-type $(a, b)$, with $a,b \ge 0$;
    \item $f$ is a normalised cuspidal modular newform of level $\Gamma_1(N)$, character $\veps$, and weight $a + b + 2$.
   \end{itemize}
   We say $(f, \chi)$ has \emph{finite type $(c, \fN)$} if it has finite type $(c, \fN, \veps)$ for some $\veps$ modulo $N$ (not necessarily primitive).
  \end{definition}

  If $(f, \chi)$ is a Heegner pair, then $L(f, \chi^{-1}, s)$ has a functional equation centred at $s = 1$, and the sign in the functional equation is $-1$, so $L(f, \chi^{-1}, 1) = 0$.
  \begin{rmk}
   Let $\mathbf{N}$ denote the Gr\"ossencharacter of $\infty$-type $(1, 1)$ such that for all primes $\mathfrak{q}$ of $\cK$, $\mathbf{N}$ maps a uniformizer at $\mathfrak{q}$ to $\#\left( \OK / \mathfrak{q}\right)$. Then $(f, \chi)$ is a Heegner pair precisely when the character $\chi \cdot \mathbf{N}$ lies in the set $\Sigma_{\mathrm{cc}}^{(1)}(\fN)$ associated to $f$, as defined in \S 4.1 of \cite{BDP}. We find it more convenient to work with $\chi$ rather than with $\chi \cdot \mathbf{N}$, which has the effect of shifting the centre of the functional equation from $s = 0$ to $s = 1$.
  \end{rmk}

  \subsection{Galois representations}
   \label{sect:galreps}

   We briefly fix notations for Galois representations. Let $p$ be a prime not dividing $N$, and fix an embedding $\Qb \into \Qb_p$. For simplicity, we suppose $p \ne 2$. We shall also suppose for the remainder of this paper that $N \ge 4$; the case of $N = 1, 2, 3$ can be dealt with by introducing auxiliary level structure in the usual way, but we leave the details to the interested reader.

   \subsubsection{Modular forms} Let $f$ be any cuspidal Hecke eigenform of level $\Gamma_1(N)$ and weight $k + 2 \ge 2$. Via our fixed embedding $\Qb \into \Qb_p$, we can consider the Hecke eigenvalues $a_\ell(f)$ as elements of $\Qb_p$. We write $V_p(f)$ for Deligne's Galois representation associated to $f$, which is the unique isomorphism class of two-dimensional $\Qb_p$-linear representations of $\Gal(\Qb/\QQ)$ on which the trace of geometric Frobenius at a prime $\ell \nmid Np$ is $a_\ell(f)$. Note that the Hodge--Tate weights of $V_p(f)$ are $\{0, -1-k\}$ (where the Hodge--Tate weight of the cyclotomic character is taken to be $+1$).

   \subsubsection{Gr\"ossencharacters as Galois characters} Recall that the Artin map identifies $\Gal(\cK^{\ab}/\cK)$ with the profinite completion of $\AA_{\cK, \f}^\times / \cK^\times$. (We normalise this map so that uniformisers map to \emph{geometric} Frobenius elements.)

   Our choice of embeddings $\cK\subset \Qb \into \Qb_p$ gives us two canonical characters $\sigma, \bar\sigma: (\cK\otimes\Qp)^\times \to \Qb_p^\times$. If $\chi: \AA_{\cK}^\times / \cK^\times \to \CC^\times$ is an algebraic Gr\"ossencharacter of $\infty$-type $(a,b)$ (see footnote above), then the map $\AA_{\cK, \f}^\times \to \Qb_p^\times$ defined by $x \mapsto \chi(x) \sigma(x_p)^{-a} \bar\sigma(x_p)^{-b}$ is trivial on $\cK^\times$ and hence can be regarded as a Galois character. It can be characterised as the unique character unramified outside $p\mathfrak{f}_\chi$ that maps a geometric Frobenius at $\mathfrak{q}$ to $\chi(\varpi_\mathfrak{q})$ for all primes $\mathfrak{q} \nmid p \mathfrak{f}_\chi$, where $\mathfrak{f}_\chi$ is the conductor of $\chi$ and $\varpi_\mathfrak{q}$ is a uniformiser at $\mathfrak{q}$. Note that this construction sends the norm character $\mathbf{N}$ to the inverse cyclotomic character, and the Hodge--Tate weights of $\chi$ at the two embeddings $\cK\into \Qb_p$ are $(-a, -b)$.

   \begin{rmk}
    Our conventions are chosen such that $(f, \chi)$ is a Heegner pair, then the representation $V = V_p(f)^* \otimes \chi$ of $\Gal(\Qb/\cK)$ satisfies $V^\tau \cong V^*(1)$, where $\tau$ is the non-trivial element of $\Gal(\cK/\QQ)$.
   \end{rmk}

   \begin{definition}
    \label{def:fields}
    We consider the following abelian extensions of $\cK$, corresponding via the Artin map to open compact subgroups of $\AA_{\cK, \f}^\times$:
    \begin{itemize}
     \item $F$ denotes the extension corresponding to $(1 + \fN\widehat{\cO}_{\cK})^\times$ (the ray class field modulo $\fN$);
     \item For $m \ge 0$, $\cK_m$ is the extension corresponding to $\widehat{\cO}_{p^m}^\times$ (the ring class field modulo $p^m$);
     \item $F_m$ denotes the extension corresponding to $(1 + \fN\widehat{\cO}_{\cK})^\times \cap \widehat{\cO}_{p^m}^\times$.
    \end{itemize}
   \end{definition}

   Since $N \ge 4$, we have $\OK^\times \cap (1 + \fN) = \{1\}$, so the Artin map restricts to an isomorphism from $(1 + \fN\widehat{\cO}_{\cK})^\times$ to $\Gal(\cK^{\ab} / F)$, where $F$ is the ray class field modulo $\fN$. We thus have a character $\sigma: \Gal(\cK^{\ab} / F) \to \Qb_p^\times$ given by $x \mapsto \sigma(x_p)^{-1}$ on $1 + \fN\widehat{\cO}_{\cK}$, and similarly for $\bar{\sigma}$. If $\chi$ is a Groessencharacter of finite type $(p^m, \fN, \veps)$ and $\infty$-type $(a, b)$, then the Galois character associated to $\chi$ restricts to $\sigma^a \bar\sigma^b$ on $\Gal(\cK^{\ab} / F_m)$. On the slightly larger group $\Gal(\cK^{\ab} / \cK_m)$, it is given by the character of $\widehat{\cO}_{p^m}^\times$ mapping $x$ to $\veps(x \bmod \fN)^{-1} \sigma(x_p)^{-a} \bar\sigma(x_p)^{-b}$; we abuse notation slightly by denoting this Galois character by $\veps \sigma^a \bar\sigma^b$. Thus the Groessencharacters of $\infty$-type $(a,b)$ and finite type $(p^n, \fN, \veps)$ for $n \le m$ correspond to extensions of $\veps \sigma^a \bar\sigma^b$ from $\Gal(\cK^{\ab} / \cK_m)$ to $\Gal(\cK^{\ab} / \cK)$.

 \subsection{Coleman families}
  \label{sect:coleman}

  Recall that the \emph{Hecke polynomial} at $p$ of a newform $f \in S_{k+2}(\Gamma_1(N), \veps)$ is the quadratic polynomial $X^2 - a_p(f) X + p^{k+1} \veps(p)$. Each root $\alpha$ of the Hecke polynomial corresponds to a normalised eigenform $f_\alpha$ of level $\Gamma_1(N) \cap \Gamma_0(p)$ with the same Hecke eigenvalues as $f$ away from $p$, and $U_p$-eigenvalue $\alpha$; we refer to these forms $f_\alpha$ as \emph{$p$-stabilisations} of $f$.

  \begin{definition}
   A \emph{$p$-stabilised Heegner pair} $(f, \alpha, \chi)$ of tame level $\fN$ consists of a Heegner pair $(f, \chi)$ of finite type $(p^s, \fN)$, for some $s \ge 0$, and a choice of root $\alpha$ of its Hecke polynomial.
  \end{definition}

  These are naturally parametrised by a rigid-analytic space, as we now explain. Let $L$ be a finite extension of $\Qp$ containing the values of $\veps$. We write $\cW$ for the 1-dimensional rigid-analytic group variety over $L$ parametrising continuous $p$-adic characters of $\Zp^\times$; we consider $\ZZ$ as a subset of $\cW(L)$ by mapping $k$ to the character $x \mapsto x^k$.

  There is a rigid-analytic space $\cE(N, \veps) \to \cW$, the $N$-new, cuspidal, character $\veps$ part of the \emph{Coleman--Mazur--Buzzard eigencurve} \cite{coleman-mazur, buzzard}, and $p$-stabilised eigenforms $f_\alpha$ as above are naturally points of $\cE(N, \veps)$. (Here, as in \cite{loeffler-zerbes:coleman}, our conventions are such that the fibre over $k \in \ZZ$ corresponds to overconvergent eigenforms of weight $k + 2$.) There is also a rigid-analytic space $\cW_\cK(\fN, \veps)$ parametrising continuous $p$-adic characters of $\AA_{\cK, \f}^\times /\cK^\times$ whose restriction to $(\widehat{\cO}_{\cK}^{(p)})^\times$ is $\veps^{-1}$. Mapping a character of $\AA_{\cK, \f}^\times$ to the inverse of its restriction to $\Zp^\times \subseteq \cO_{\cK,p}^\times$ defines a morphism $\cW_\cK(\fN, \veps) \to \cW$.

  \begin{definition}
   \label{def:Keigenvar}
   With the above notations, we define $\cE_{\cK}(\fN, \veps)$ to be the fibre product
   \[
   \cE_{\cK}(\fN, \veps) \coloneqq \cW_\cK(\fN, \veps) \mathop{\times}_{\cW} \cE(N, \veps).
   \]
   We define $\cE_{\cK}(\fN) = \bigsqcup_{\veps} \cE_{\cK}(\fN, \veps)$, and similarly $\cE(N)$ and $\cW_{\cK}(\fN)$.
  \end{definition}

  This will be the parameter space for our $p$-adic families. By construction, any $p$-stabilised Heegner pair of tame level $\fN$ gives a point on $\cE_{\cK}(\fN)$, which lies above $k \in \ZZ \subset \cW$ if $f$ has weight $k + 2$.

  \begin{rmk}[Notes on the space $\cE_{\cK}(\fN)$] \
   \begin{enumerate}[(i)]
    \item We can interpret $\cE_{\cK}(\fN)$ as an eigenvariety for the quasi-split unitary group
    \[ (\GL_2 \times \Res_{\cK/\QQ} \GG_m) / \GG_m \cong \operatorname{GU}(1, 1).\]

    \item The space $\cE_{\cK}(\fN)$ is a torsor over $\cE(N)$ for the rigid-analytic group variety $\cW_\cK^\ac$ parametrising characters of $\AA_{\cK, \f}^\times / ( \cK^\times\cdot  \widehat{\cO}_{p^\infty}^\times ) \cong \Gal(\cK_\infty /\cK)$.

    \item If $\veps_0$ is the trivial character mod $N$, then the images of both $\cW_{\cK}(\fN, \veps_0)$ and $\cE(N, \veps_0)$ in $\cW$ are contained in the subset $\cW_+$ of the components of $\cW$ parametrising characters with $\kappa(-1) = 1$. Over this subset, one can choose a character $\Theta: \Zp^\times \to \cO(\cW_+)^\times$ satisfying $\Theta^2 = \kappa^{\mathrm{univ}}$, where $\kappa^{\mathrm{univ}}$ is the canonical character. The composite of $\Theta$ with the norm map $\AA_{\cK, \f}^\times / \cK^\times \xrightarrow{\operatorname{Nm}_{\cK/\QQ}} \AA_{\QQ, \f}^\times / \QQ^\times_{>0} \cong \widehat\ZZ^\times \xrightarrow{\Theta} \cO(\cW_+)$ then gives a splitting of the map $\cW_{\cK}(N, \veps_0) \to \cW_+$, and hence an identification
    \[  \cE_{\cK}(\fN, \veps_0) \cong \cE(N, \veps_0) \times \cW_{\cK}^{\mathrm{ac}}. \]
    This is the approach adopted in \cite{howard:variation} and many other subsequent works such as \cite{castella:padicvar}. However, the choice of the square-root character $\Theta$ is non-canonical, and it is awkward to handle non-trivial $\veps$ by this approach, particularly when $\veps(-1) = -1$. So we prefer to use the space $\cE_{\cK}(\fN)$ instead.
   \end{enumerate}
  \end{rmk}

\section{Generalised Heegner cycles}

 \subsection{Groups and embeddings}
  \label{sect:groups}

  If $\tau \in \cK - \QQ$, there is a unique embedding of $\QQ$-algebras $\iota: \cK \into \operatorname{Mat}_{2 \times 2}(\QQ)$ such that $\iota(\cK)$ fixes the line in $\cK^2$ spanned by $\left(\begin{smallmatrix} \tau \\ 1 \end{smallmatrix}\right)$ and acts on the corresponding line by the natural scalar multiplication.

  \begin{rmk}
   Concretely, $\iota$ maps $a + b\tau \in \cK$ to the matrix $\begin{pmatrix} a + (\tau + \bar{\tau}) & -b\tau\bar{\tau} \\ b & a \end{pmatrix} \in M_{2\times 2}(\QQ)$. This coincides with the embedding denoted $\xi$ in \cite[\S 4.3]{BDP}.
  \end{rmk}

  Clearly, the algebra embedding corresponding to $g \cdot \tau$ is $g \iota g^{-1}$. We can also regard $\iota$ as an embedding of algebraic groups $H \into \GL_2$, where $H$ is the torus $\Res_{\cK/\QQ}(\GG_m)$.
%

  \begin{notation}
   We let $\tau^* = -1/\bar\tau$.
  \end{notation}

  Clearly, the vector $v = \begin{pmatrix}\tau \\ 1 \end{pmatrix}$ is an eigenvector for $\iota(\cK^\times)$ acting via the standard representation of $\GL_2(\cK)$ on $\cK^2$: we have $\iota(u)\cdot  v = u v$ for all $u \in \cK^\times$. On the other hand, if $(\cK^2)^\vee$ denotes the dual of the standard representation of $\GL_2(\cK)$, then $v^* = \begin{pmatrix}\tau^* \\ 1 \end{pmatrix}$ is an eigenvector for $\iota(\cK^\times)$ acting on $(\cK^2)^\vee$, i.e. ${}^t\iota(u)^{-1} \cdot v^* = u^{-1}  v^*$.

 \subsection{CM points}

  Recall that the modular curve $Y_1(N)$ can be defined as the moduli space of pairs $(E, P)$, where $E$ is an elliptic curve and $P$ a section of exact order $N$ (both defined over some $\ZZ[1/N]$-algebra). The identification
  \( \Gamma_1(N) \backslash \mathbf{H} \cong Y_1(N)(\CC), \)
  where $\mathbf{H}$ is the upper half-plane, is given by mapping $\tau \in \mathbf{H}$ to the pair $\left(\CC/ \ZZ\tau + \ZZ, \tfrac{1}{N} \bmod \ZZ\tau + \ZZ\right)$.

  Let $A$ denote the elliptic curve $\CC/ \OK$, and let $t$ denote a choice of generator of $\fN^{-1} / \OK$. Then the pair $(A, t)$ defines a point of $Y_1(N)(\CC)$. This point is defined over the ray class field $F$ of $\cK$ modulo $\fN$ (see e.g.~\cite[\S 15.3.1]{kato04}).

  \begin{lemma}
   \label{lem:iota}
   Let $\mathbf{H}$ be the upper half-plane. There exists a point $\tau \in \mathbf{H} \cap \cK$ such that
   \begin{enumerate}[(i)]
    \item the $\Gamma_1(N)$-orbit of $\tau$ represents the pair $(A, t)$,
    \item $\tau$ generates the quotient of additive groups $(\OK \otimes \Zp)/ \Zp$,
    \item $\tau$ is a unit at all primes above $p$.
   \end{enumerate}
  \end{lemma}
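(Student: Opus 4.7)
The idea is to exploit the modular interpretation. A point $\tau\in\mathbf{H}\cap\cK$ represents $(A,t)$ if and only if there exists $\mu\in\cK^\times$ with $\mu(\ZZ\tau+\ZZ)=\OK$ and $\mu/N\equiv t\pmod{\OK}$; equivalently, $\tau=\nu/\mu$ for some $\ZZ$-basis $\{\mu,\nu\}$ of $\OK$ with $\mu\equiv Nt\pmod{N\OK}$ (after negating $\nu$ if necessary so that $\tau\in\mathbf{H}$). My plan is to construct such a basis in which moreover \emph{both} $\mu$ and $\nu$ are units at every prime of $\cK$ above $p$. Then $\tau=\nu/\mu$ lies in $\OK\otimes\Zp$, giving (iii); and $\{1,\tau\}=\mu^{-1}\{\mu,\nu\}$ is a $\Zp$-basis of $\OK\otimes\Zp$, giving (ii).

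To construct $\mu$, use $p\nmid N$ and the Chinese remainder theorem to produce $\mu_0\in\OK$ with $\mu_0\equiv Nt\pmod{N\OK}$ and $\mu_0\equiv 1\pmod{p\OK}$; such $\mu_0$ is automatically a unit at each prime above $p$. I then adjust $\mu_0$ inside its coset $C:=\mu_0+pN\OK$ to a \emph{primitive} element of $\OK$ (so that it extends to a $\ZZ$-basis). The key point is that $C\not\subset q\OK$ for any rational prime $q$: for $q\mid p$, because $\mu_0\equiv 1\pmod p$; for $q\mid N$, because $Nt\notin q\OK$ (otherwise $(N/q)t\in\OK$, contradicting that $t$ has exact order $N$ in $\fN^{-1}/\OK$); and for $q\nmid pN$, because $pN\OK\not\subset q\OK$. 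A M\"obius/inclusion--exclusion sieve then shows that the primitive elements of $C$ have positive natural density, so in particular exist; take $\mu$ to be one.

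Next, pick $\nu_0\in\OK$ completing $\{\mu,\nu_0\}$ to a $\ZZ$-basis of $\OK$, and replace $\nu_0$ by $\nu=\nu_0+k\mu$ for a suitable $k\in\ZZ$. As $k$ varies over $\ZZ/p\ZZ$, the residues $\nu_0+k\mu\bmod p\OK$ trace an affine $\mathbb{F}_p$-line in $\OK/p\OK$ whose direction is the unit $\mu$. Inspecting the split, inert, and ramified cases in turn, one checks that the zero divisors of $\OK/p\OK$ meet this line in at most two points, so (using $p>2$) some $k$ makes $\nu$ a unit at every prime of $\cK$ above $p$.

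With $\mu$ and $\nu$ in hand, set $\tau=\nu/\mu$, negating $\nu$ if necessary so that $\tau\in\mathbf{H}$. Then $\ZZ\tau+\ZZ=\mu^{-1}\OK$, and multiplication by $\mu$ induces an isomorphism $E_\tau\xrightarrow{\sim}A$ sending $\tfrac{1}{N}$ to $\tfrac{\mu}{N}=t$, which is (i); conditions (ii) and (iii) were arranged in advance. The only subtle step is producing $\mu$ primitive with the prescribed congruences; the rest is straightforward bookkeeping.
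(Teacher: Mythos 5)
Your proposal is correct, but it takes a noticeably different route from the one in the paper. You work directly on the lattice side: you translate condition (i) into the existence of a $\ZZ$-basis $\{\mu,\nu\}$ of $\OK$ with $\mu\equiv Nt\pmod{N\OK}$, then manufacture $\mu$ by CRT plus a Mertens-type sieve to get a primitive representative of the coset $\mu_0+pN\OK$, and finally adjust $\nu$ mod $p$ along the affine line in direction $\mu$. The paper instead starts from \emph{any} generator $\tau$ of $\OK/\ZZ$ (which already gives (ii), and gives (iii) after shifting $\tau\mapsto\tau+n$ for the split/ramified case, using $p>2$), notes that such a $\tau$ automatically lies in the correct $\operatorname{SL}_2(\ZZ)$-orbit for $A$, and then uses the group-theoretic fact that $\Gamma(p)$ acts transitively on $\operatorname{SL}_2(\ZZ)/\Gamma_1(N)$ when $p\nmid N$ to move $\tau$ into the right $\Gamma_1(N)$-orbit \emph{while preserving} (ii) and (iii), since $\gamma\in\Gamma(p)$ acts through units in $\OK\otimes\Zp$. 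Your approach buys explicitness — you see the CM basis and the congruences directly, and the sieve step gives quantitative control over how many choices exist — but at the cost of a density/inclusion-exclusion argument. The paper's approach is shorter and avoids the sieve entirely, at the price of relying on the transitivity of $\Gamma(p)$ (strong approximation for $\operatorname{SL}_2$). Both correctly identify $p>2$ as the point where a little room to move is needed.

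One tiny piece of bookkeeping to flag in your writeup: when you negate $\nu$ to force $\tau\in\mathbf{H}$, you should note that $\{\mu,-\nu\}$ is still a $\ZZ$-basis, the congruence on $\mu$ is untouched, and $-\nu$ is still a unit above $p$, so all three conclusions survive; and the displayed identity $\tfrac{\mu}{N}=t$ should read $\tfrac{\mu}{N}\equiv t\pmod{\OK}$. Neither affects correctness.
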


  \begin{proof}
   If $\tau$ is any generator of the quotient group $\OK/ \ZZ$, then $\tau$ certainly generates $(\OK\otimes \Zp) / \Zp$. If $p$ is inert in $\cK$ then it is immediate that $\tau$ is a unit above $p$; if not, we may achieve this by replacing $\tau$ with $\tau + n$ for some $n \in \ZZ$ (using the fact that by assumption $p \ne 2$).

   This gives a $\tau$ which satisfies (ii) and (iii), and which represents the $\operatorname{SL}_2(\ZZ)$-orbit of $(A, t)$. Moreover, this is also true if we replace $\tau$ with $\gamma \cdot \tau$, for any $\gamma \in \Gamma(p)$, where $\Gamma(p)$ is the principal congruence subgroup of level $p$ in $\operatorname{SL}_2(\ZZ)$. Since $p \nmid N$, $\Gamma(p)$ acts transitively on $\operatorname{SL}_2(\ZZ) / \Gamma_1(N)$, so we can choose a $\tau$ which represents $(A, t)$.
  \end{proof}

  \begin{notation}
   \label{notation:CMpts}
   We fix (for the remainder of this paper) a $\tau$ satisfying the conditions of the lemma, and let $\iota: H \into \GL_2$ be the corresponding embedding. For $m \ge 0$, we let $\tau_m = p^{-m} \tau$, and we let $\iota_m = \mtwo {p^{-m}}001 \iota \mtwo {p^m}001$ be the embedding corresponding to $\tau_m$. We let $A_m = \CC / (\ZZ \tau_m + \ZZ)$, $\phi_m: A \to A_m$ the canonical cyclic $p^m$-isogeny, and $t_m$ the $N$-torsion point $\phi_m(t_A)$ of $A_m$, so that $\Gamma_1(N) \cdot \tau_m$ represents the pair $(A_m, t_m)$.
  \end{notation}

  Note that the endomorphism ring of $A_m$ is $\cO_{p^m}$, so $(A_m, \phi_m)$ defines an element of the set $\mathrm{Isog}^\fN_{p^m}(A)$ in the notation of \cite[\S 1.4]{BDP}. The pair $(A_m, \phi_m)$ is defined over the field $F_m$ of Definition \ref{def:fields}.

  Observe that the Tate module $T_p(A)$ is canonically identified with $\OK\otimes\Zp$. From the main theorem of complex multiplication, we have the following characterisation of the \'etale cohomology of $A$:

  \begin{proposition}
   The action of $\Gal(\Qb/F)$ on $T_p(A) \otimes \Qb_p$ is given by $\sigma^{-1} \oplus \bar\sigma^{-1}$, where $\sigma$ and $\bar\sigma$ are regarded as Galois characters as in \S\ref{sect:galreps}.
  \end{proposition}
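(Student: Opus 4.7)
This is a standard computation in complex multiplication theory. The plan is to combine the CM uniformisation of $A$ with the Gr\"ossencharacter attached to $A$ by the main theorem of CM, together with a duality argument to pass from $H^1_{\et}$ to $T_p$.

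First, the uniformisation $A(\CC) = \CC/\cO_\cK$ identifies $T_p(A)$ canonically with $\cO_\cK \otimes_\ZZ \Zp$ as an $\cO_\cK$-module. Because the CM structure of $A$ is defined over the Hilbert class field $H_\cK \subseteq F$, the Galois action of $\Gal(\overline{\QQ}/F)$ on $T_p(A)$ is $\cO_\cK$-linear and hence factors through an idele-class character $\lambda \colon \Gal(\cK^{\ab}/F) \to (\cO_\cK\otimes\Zp)^\times$. After extending scalars via $\cK\otimes_\QQ\Qb_p \cong \Qb_p\oplus \Qb_p$ --- the two factors corresponding to the embeddings $\sigma,\bar\sigma$ --- one obtains a splitting $T_p(A)\otimes\Qb_p = L_\sigma\oplus L_{\bar\sigma}$ on which $\Gal(\overline{\QQ}/F)$ acts by the characters $\sigma\circ\lambda$ and $\bar\sigma\circ\lambda$ respectively.

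Next, identify $\lambda$ using the Shimura--Taniyama reciprocity law: $A$ is attached to the Gr\"ossencharacter of $\cK$ of $\infty$-type $(1,0)$ (trivial on the finite ideles away from $\fN$). Unwinding the definitions of \S\ref{sect:galreps}, this says precisely that the Galois character acting on the $\sigma$-component of $H^1_\et(A_{\overline{\QQ}}, \Qb_p)$ is the character the paper denotes ``$\sigma$''. Since $T_p(A)\otimes\Qb_p$ is Galois-equivariantly dual to $H^1_\et(A_{\overline{\QQ}}, \Qb_p)$, and the $\sigma$-eigenspaces for the $\cO_\cK$-action correspond under this duality, the Galois character on $L_\sigma$ is $\sigma^{-1}$; the same reasoning applied to the complex-conjugate embedding yields $\bar\sigma^{-1}$ on $L_{\bar\sigma}$, which is the claim.

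The main technical difficulty is bookkeeping of sign conventions: covariant versus contravariant Tate module, the geometric- versus arithmetic-Frobenius normalisation of the Artin map, and the twisting factors $\sigma(x_p)^{-a}\bar\sigma(x_p)^{-b}$ used in \S\ref{sect:galreps} to pass from Gr\"ossencharacters to Galois characters. A useful consistency check is the determinant: by the Weil pairing $\det T_p(A) = \Zp(1)$, so the product of the two Galois characters must be the cyclotomic character; and indeed the paper's conventions give $\sigma\bar\sigma = \mathbf{N}$, which is the inverse cyclotomic character, so $\sigma^{-1}\bar\sigma^{-1}$ is cyclotomic, as required.
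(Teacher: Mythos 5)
Your argument reaches the right conclusion, and since the paper simply asserts the proposition as a consequence of the main theorem of CM (with no written proof), your filling-in of the details is the most useful thing to compare. The outline is sound: the Galois action on $T_p(A)$ is $\cO_\cK$-linear, hence factors through a character of $\Gal(\cK^{\ab}/F)$, and after tensoring with $\Qb_p$ one decomposes into $\sigma$- and $\bar\sigma$-eigenlines and pins down the character on each. Your consistency check via the Weil pairing and the identity $\sigma\bar\sigma = \mathbf{N}$ is exactly the kind of sanity check one should do with all these sign conventions.

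Two remarks, though. First, the detour through $H^1_{\et}$ and duality is an unnecessary complication: the Shimura--Taniyama reciprocity law pins down the action on $T_p(A)$ directly, and using it in this form avoids one extra flip of sign. Concretely, if $g \in \Gal(\cK^{\ab}/F)$ corresponds under the paper's geometric-Frobenius-normalised Artin map to $x \in (1+\fN\widehat{\cO}_\cK)^\times$, then (writing $s = x^{-1}$ for the corresponding element under the arithmetic normalisation, which is what the standard statements of the main theorem of CM use) the main theorem gives that $g$ acts on $A_{\mathrm{tors}} \cong \cK/\OK$ by $u\mapsto s^{-1}u$, i.e.\ on $T_p(A)\cong\OK\otimes\Zp$ by multiplication by $s_p^{-1} = x_p$. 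Since the paper's Galois character $\sigma$ is $g\mapsto\sigma(x_p)^{-1}$, the $\sigma$-eigenline of $T_p(A)\otimes\Qb_p$ transforms by $\sigma^{-1}$, and likewise for $\bar\sigma$. This is shorter and avoids having to justify that the $\sigma$-eigenspaces of $T_p$ and $H^1_{\et}$ correspond under duality (which is true, since the $\OK$-action is self-transpose, but is one more thing to check). Second, the parenthetical "(trivial on the finite ideles away from $\fN$)" is not quite right: the Gr\"ossencharacter you have in mind is \emph{unramified} outside $\fN$ (trivial on the local unit groups), not trivial on the full idele group at those places — it must, for example, see the class group. Also note that $A$ is only defined over $F$, not over $\cK$, so "the Gr\"ossencharacter of $\cK$ attached to $A$" needs a gloss: the relevant object is the character of $\Gal(\cK^{\ab}/F)$ arising from the $\OK$-linear Galois action, which is what your first paragraph already correctly isolates.
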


  Since $\sigma$ and $\bar\sigma$ are unramified outside $p$ (and $A$ is independent of $p$), the Neron--Ogg--Shafarevich criterion implies that $A$ has good reduction at every prime of $F$. In particular, $(A, t)$ defines a point on the integral model of $Y_1(N)$ over $\cO_F[1/N]$.

 \subsection{Algebraic representations}

  If $E$ is any commutative ring, we write $\Sym^k E^2$ for the left representation of $\GL_2(E)$ afforded by the space of homogenous polynomials of degree $k$ over $E$ in two variables $X, Y$, with $\mtwo a b c d \cdot f = f\left((X, Y) \cdot \mtwo a b c d\right) = f(aX + cY, bX + dY)$. The dual of $\Sym^k E^2$ as a $\GL_2(E)$-representation is given by $\TSym^k( (E^2)^\vee)$, where $(E^2)^\vee$ is the dual of the standard representation, and $\TSym^k$ denotes symmetric tensors (see (see \cite[\S 2.2]{kings15} or \cite[\S 2.2]{KLZ2} for further details).

 We now assume that $E$ is a field, and that there exists an embedding $\sigma: \cK\into E$ (and we fix such a choice). Then $\sigma$ and its conjugate $\bar\sigma$ are 1-dimensional representations of $H$ over $E$.

 \begin{definition}
  We let $G = \GL_2 \times H$, and for integers $a, b \ge 0$, we define $V_{a,b}$ to be the following representation of $G$ over $E$:
  \[ V_{a,b}=\Sym^{a+b}(E^2) \boxtimes \left(\sigma^{-a} \otimes \bar{\sigma}^{-b}\right). \]
 \end{definition}

 We write $\delta$ for the diagonal embedding $(\iota, \mathrm{id}): H \into G$, where $\iota$ is the embedding $H \into\GL_2$ fixed in Notation \ref{notation:CMpts}, and similarly $\delta_m = (\iota_m, \mathrm{id})$. We write $\delta_m^*$ for the restriction of representations from $G$ to $H$ via $\delta_m$. The following computation is straightforward:

 \begin{proposition}
  For any $m$, the representation $\delta_m^*(V_{a,b}^\vee)$ has a unique summand isomorphic to the trivial representation of $H$.\qed
 \end{proposition}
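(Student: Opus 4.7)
The plan is to diagonalise the standard representation of $H$ on $E^2$ induced by $\iota_m$, and then read off the weight decomposition of the relevant symmetric power. Since we have fixed an embedding $\sigma:\cK\into E$, the base change $H_E$ splits canonically as $\GG_{m,E}\times \GG_{m,E}$, with the two projection characters being $\sigma$ and $\bar\sigma$. The vector $v = \left(\begin{smallmatrix}\tau\\1\end{smallmatrix}\right)$ satisfies $\iota(u)\cdot v = u v$ by the very construction of $\iota$, so it spans a $\sigma$-eigenline in $E^2$; its complex conjugate $\bar v = \left(\begin{smallmatrix}\bar\tau\\1\end{smallmatrix}\right)$ analogously spans a $\bar\sigma$-eigenline. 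These two lines are linearly independent over $E$ because $\tau \notin \QQ$, hence
\[ \iota^*(E^2)\;\cong\;\sigma\oplus\bar\sigma \]
as $H$-representations over $E$. For any $m$, the embedding $\iota_m$ is $\GL_2(\QQ)$-conjugate to $\iota$ by $\mtwo{p^{-m}}001$; such conjugation merely changes the eigenvectors without altering the isomorphism class of the underlying $H$-module, so $\iota_m^*(E^2)\cong\sigma\oplus\bar\sigma$ as well.

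From here the computation is purely formal. Dualising gives $\iota_m^*\bigl((E^2)^\vee\bigr)\cong\sigma^{-1}\oplus\bar\sigma^{-1}$, and since $E$ has characteristic zero, $\TSym^{a+b}$ coincides, as a $\GL_2$-representation, with $\Sym^{a+b}$, so
\[
 \iota_m^*\,\TSym^{a+b}\bigl((E^2)^\vee\bigr) \;\cong\; \bigoplus_{\substack{i,j\ge 0\\ i+j = a+b}} \sigma^{-i}\bar\sigma^{-j}.
\]
Tensoring with the $H$-factor $\sigma^{a}\otimes\bar\sigma^{b}$ of $V_{a,b}^\vee$ (on which $\delta_m$ acts as the identity) yields
\[
 \delta_m^*(V_{a,b}^\vee) \;\cong\; \bigoplus_{\substack{i,j\ge 0\\ i+j = a+b}} \sigma^{a-i}\bar\sigma^{b-j},
\]
and the trivial character $\sigma^{0}\bar\sigma^{0}$ appears for exactly one index, namely $(i,j)=(a,b)$, since the constraint $i+j=a+b$ forces $j$ once $i$ is chosen.

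No step of this argument presents a genuine obstacle: everything reduces to the observation that the CM embedding diagonalises $E^2$ into its two archimedean eigenlines, which is immediate from the splitting of the \'etale $\QQ$-algebra $\cK$ after base change to $E$. The level parameter $m$ plays no role, because $\iota_m$ and $\iota$ are $\GL_2(\QQ)$-conjugate, and because the $H$-factor of $V_{a,b}$ is independent of $m$.
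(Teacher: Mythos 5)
Your proof is correct and is essentially the computation the paper has in mind when it calls the proposition ``straightforward'' (the paper gives no proof beyond exhibiting the explicit basis vector $e_m^{[a,b]}$ of the trivial summand in the paragraph that follows): $\iota_m$ diagonalises $E^2$ into a $\sigma$-eigenline and a $\bar\sigma$-eigenline, so $\TSym^{a+b}((E^2)^\vee)$ splits into the characters $\sigma^{-i}\bar\sigma^{-j}$ with $i+j=a+b$, and twisting by $\sigma^a\bar\sigma^b$ makes the trivial character appear exactly once, at $(i,j)=(a,b)$. The only cosmetic point is that $\bar v$ should be read as $\binom{\bar\sigma(\tau)}{1}\in E^2$ rather than as a vector with entries in $\cK$; with that understood the argument matches the paper's.
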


 If $\iota$ is given by $\tau \in \PP^1_{\cK}$ as above, then we let $e_m = \begin{pmatrix}\sigma(\tau_m^*) \\ 1 \end{pmatrix} \in (E^2)^\vee$, where $\tau_m^* = p^m \tau^*$. On this vector $e_m$, the group $\iota_m(H)$ acts as $\sigma^{-1}$. Hence, for every $a, b \ge 0$, we can consider the vector
 \[ e^{[a, b]}_m = (e_m)^{\otimes a} \cdot \left( \overline{e_m}\right)^{\otimes b} \in \TSym^{a+b} ((E^{2})^\vee) = \left(\Sym^k E^2\right)^\vee, \]
 where $\cdot$ denotes the symmetrised tensor product in the algebra $\TSym^\bullet( (E^{2})^\vee)$. The vector $e^{[a, b]}_m$ transforms under $\delta_m(H)$ via $\sigma^{-a} \bar\sigma^{-b}$, so it is a basis vector of $\delta_m^*(V_{a,b}^\vee)$.

 \subsection{Heegner classes}
 \label{sect:pushfwd}
  As in \cite[\S 2.3]{BDP}, the pair $(\phi_m, A_m)$ determines a cycle
  \[ \Delta_{\phi_m} \in \epsilon_W \epsilon_A \CH^{k+1}\left( (W_k \times A^k)_{F_m}\right)_{\QQ},\]
  for each $k \in \ZZ_{\ge 0}$, where $W_k$ is a compactification of the fibre product $\cE^k$ of the universal elliptic curve over $Y_1(N)$, $\epsilon_W$ and $\epsilon_A$ are certain idempotents defined in \emph{op.cit.}, and $F_m$ is as in Notation \ref{notation:CMpts}. Via pullback, we regard $\Delta_{\phi_m}$ simply as a cycle on $\cE^k \times A^k$. In the language of relative Chow motives, we have
  \[  \epsilon_W \epsilon_A \CH^{k+1}\left( (\cE^k \times A^k)_{F_m}\right) =
  H^2_{\mathrm{mot}}\left( Y_1(N)_{F_m}, \TSym^k(h^1(\cE)(1)) \otimes \TSym^k(h^1(A))(1)\right),\]
  where $h^1(\cE) \in \operatorname{CHM}(Y_1(N)_{\QQ})$ is the degree 1 part of the relative motive of $\cE / Y_1(N)$, and similarly for $A$.

  If we choose $a,b \ge 0$ with $a + b = k$, then inside $\TSym^k(h^1(A)) \otimes \cK$ is a rank-1 direct summand $h^{(a, b)}(A)$ on which the complex multiplication action of $\OK$ is given by $[x]^* = x^a \bar{x}^b$. We write
  \[ z^{[a,b]}_{\operatorname{mot}, m} \in H^2_{\mathrm{mot}}\left( Y_1(N)_{F_m}, \TSym^k(h^1(\cE)(1)) \otimes h^{(a, b)}(A)(1)\right)\]
  for the projection of $\Delta_{\phi_m}$ to this direct summand.

  We can reinterpret this more slickly as follows. Let us write $S_m$ for the canonical model over $\cK$ of the Shimura variety for $H$ of level $U_{\fN, p^m} = \{ x \in \widehat{\cO}_{p^m}^\times: x = 1 \bmod \fN\}$. Since our Shimura data for both $H$ and $\GL_2$ are of PEL type, there is a functor from algebraic representations of $G$ to relative Chow motives on $Y_1(N) \times S_m$, compatible with tensor products and duals, constructed by Ancona \cite{ancona,LSZ:gsp4}; and it follows from the definition of this functor that $\TSym^k(h^1(\cE)(1)) \otimes h^{(a,b)}(A)$ is the relative motive associated to $V_{a,b}^\vee$. A result due to Torzewski \cite{torzewski19} shows that this functor is natural with respect to morphisms of PEL data, so we have a diagram
  \[
   \begin{tikzcd}
   \operatorname{Rep}_{\cK}(G) \rar["\delta_m^*"] \dar["\operatorname{Anc}_G" left]& \operatorname{Rep}_{\cK}(H)\dar["\operatorname{Anc}_H"]\\
   \operatorname{CHM}_{\cK}(Y_1(N) \times S_m) \rar["\delta_m^*"]
   & \operatorname{CHM}_{\cK}(S_m)
   \end{tikzcd}
  \]
  where top arrow $\delta_m^*$ is restriction of algebraic representations, and the bottom arrow is pullback of relative Chow motives. For any $a, b \ge 0$, let $\cV_{a,b}$ be the image of $V_{a, b}$ under Ancona's functor. Since $S_m$ has codimension 1 in $Y_1(N) \times S_m$, the pushforward (Gysin) map $\delta_{m*}$ gives a morphism
  \begin{align*}
   H^0_\mathrm{mot}(S_m, \delta_m^*(\cV_{a,b}^\vee )) \xrightarrow{\delta_{m*}}& H^2_\mathrm{mot}(Y_1(N)_{\cK} \times S_m, \cV_{a,b}^\vee (1))\\
   =& H^2_{\mathrm{mot}}\left(Y_1(N)_{F_m}, \TSym^k(h^1(\cE)(1)) \otimes h^{(a, b)}(A)(1) \right),
  \end{align*}
  for any $a, b$, where the last isomorphism comes from the fact that $S_m$ is isomorphic as a $\cK$-variety to the $\Gal(F_m / \cK)$-orbit of $\tau_m$. The left-hand side is just $(\cV_{a,b}^\vee)^{\delta_m(H)}$, and the pushforward of our basis vector $e_m^{[a, b]}$ is exactly the Heegner class.

  If $L$ denotes a $p$-adic field with an embedding $\sigma: \cK \into L$, then the above motivic cohomology groups map naturally to their \'etale analogues with coefficients in $L$; and the $p$-adic realisation of $\cV_{a, b}$ is exactly the lisse \'etale $L$-sheaf on $\Sh_G(U)$ associated to the representation $V_{a, b}$ of $G(\Qp)$. (We shall abuse notation a little by using the same symbol $\cV_{a, b}$ both for the relative Chow motive and its \'etale realisation.) We write
  \[
   z_{\et, m}^{[a,b]} \in H^2_{\et}\left(Y_1(N)_{F_m}, \cV_{a,b}^\vee(1) \right)
  \]
  for the \'etale realisation of $z^{[a,b]}_{\operatorname{mot}, m}$. The realisation map is compatible with pushforward maps, so we deduce that $z_{\et, m}^{[a,b]}$ is simply the pushforward via $\delta_m$ of the class in $H^0_{\et}(S_m, \delta_m^* \cV_{a,b}^\vee)$ given by $e_m^{[a,b]}$.

  \begin{rmk}
   There is a corresponding description of the realisations of the Heegner class in other cohomology theories admitting functorial realisation maps from motivic cohomology, such as absolute Hodge cohomology, or $p$-adic syntomic cohomology.
  \end{rmk}

 \subsection{Projection to eigenspaces}
  \label{sect:projn}

  Since $Y_1(N)$ is affine, its base-change to $\Qb$ has cohomological dimension 1, so the Hochschild--Serre spectral sequence gives a natural map
  \begin{align*}
  H^2_{\et}\left(Y_1(N)_{F_m}, \cV_{a,b}^\vee(1) \right)& \to H^1\left(F_m, H^1_{\et}(Y_1(N)_{\Qb}, \cV^\vee_k(1)) \otimes \sigma^a \bar{\sigma}^b\right) \\
  &= H^1\left(\cK, H^1_{\et}(Y_1(N)_{\Qb}, \cV^\vee_k(1)) \otimes \Ind_{F_m}^\cK  \sigma^a \bar{\sigma}^b\right)
  \end{align*}
  where $\cV_k$ is the \'etale $\Qp$-sheaf associated to the representation $\Sym^k( \Qp^2 )$ of $\GL_2(\Qp)$, and $\sigma, \bar{\sigma}$ are interpreted as characters $\Gal(\cK^{\ab} / F) \to \Qb{}_p^\times$ via the Artin map.

  As the groups $H^1\left(\cK, -\right)$ are not in general finite-dimensional, it is convenient to introduce the following abuse of notation. Let $\Sigma$ be the set of primes of $\cK$ dividing $pN$. Then all the modular curves and Kuga--Sato varieties that we consider have smooth models over the ring $\OK[1/\Sigma]$, so the elements we consider will in fact land in the finite-dimensional subspace $H^1(\OK[1/\Sigma], -)$ (the cohomology unramified outside $\Sigma$). \emph{We shall abusively write $H^1(\cK, -)$ when we mean $H^1(\OK[1/\Sigma], -)$ henceforth}, and similarly for finite extensions of $\cK$.

  \begin{definition}
   If $f$ is an eigenform of level $N$, weight $k + 2$ and character $\veps$, then for $m \ge 0$ and $0 \le j \le k$ we define
   \[
    z_{\et, m}^{[f, j]} \coloneqq \mathrm{pr}_f\left(z_{\et, m}^{[k-j, j]}\right) \in H^1\left(F_m, V_p(f)^*(\sigma^{k-j} \bar{\sigma}^j) \right).
   \]
  \end{definition}

  Now suppose $(f, \chi)$ is a Heegner pair of finite type $(p^m, \fN, \veps)$ and $\infty$-type $(a,b)$. Then $\chi$ gives an extension of $\sigma^a \bar\sigma^b$ to $\Gal(\Qb /\cK)$.

  \begin{proposition}
   \label{prop:invariant}
   The class $z_{\et, m}^{[f, j]}$ lies in $H^1\left(F_m, V_p(f)^*(\chi) \right)^{\Gal(F_m / \cK_m)} \cong H^1\left(\cK_m, V_p(f)^*(\chi) \right)$.
  \end{proposition}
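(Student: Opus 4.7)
The strategy is to exhibit the natural $\Gal(F_m/\cK_m)$-action on $z_{\et,m}^{[f,j]}$ as exactly the character twist distinguishing two a priori different $\Gal(\Qb/\cK_m)$-structures on the coefficient module.

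Write $a = k-j$, $b = j$, and let $\chi_0 = \sigma^a \bar\sigma^b$, viewed now as a character of $\Gal(\Qb/\cK)$ via the Galois characters $\sigma, \bar\sigma$ of \S\ref{sect:galreps}. Since $\chi_0$ and $\chi$ agree on $\Gal(\Qb/F_m)$, the underlying abelian groups $H^1(F_m, V_p(f)^*(\chi))$ and $H^1(F_m, V_p(f)^*(\chi_0))$ coincide; but the associated $\Gal(\Qb/\cK_m)$-module structures differ by the character $\chi/\chi_0$ of $\Gal(F_m/\cK_m)$. Using the explicit formula for $\chi|_{\Gal(\cK^{\ab}/\cK_m)}$ from \S\ref{sect:galreps} and the fact that representatives of $\Gal(F_m/\cK_m)$ can be chosen in $\widehat{\cO}_{p^m}^\times$ with trivial $p$-component, one reads off that this ratio is the character $\veps^{-1}$ of $\Gal(F_m/\cK_m) \cong (\OK/\fN)^\times \cong (\ZZ/N\ZZ)^\times$. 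Consequently, the desired $\chi$-invariance is equivalent to showing that, under the $\chi_0$-extended $\Gal(F_m/\cK_m)$-action, $\sigma \cdot z_{\et,m}^{[f,j]} = \veps(\sigma) \cdot z_{\et,m}^{[f,j]}$.

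I would prove this equivariance by a direct geometric computation. By \S\ref{sect:pushfwd} together with Notation \ref{notation:CMpts}, the class $z_{\et,m}^{[k-j,j]}$ is built from the cycle $\Delta_{\phi_m}$, which depends on the CM triple $(A_m, \phi_m, t_m)$. The pair $(A_m, \phi_m)$ is defined over $\cK_m$ (its isomorphism class being determined by the order $\cO_{p^m}$), whereas the $\Gamma_1(N)$-level structure $t_m$ is defined only over $F_m$. For $\sigma \in \Gal(F_m/\cK_m)$, the main theorem of complex multiplication gives $\sigma(A_m, \phi_m, t_m) = (A_m, \phi_m, \lambda(\sigma)\, t_m)$, where $\lambda \colon \Gal(F_m/\cK_m) \xrightarrow{\sim} (\ZZ/N\ZZ)^\times$ is induced by the natural action of $(\OK/\fN)^\times$ on $A_m[\fN]$. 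Altering $t_m$ by $\lambda(\sigma)$ is precisely the action of the diamond automorphism $\langle \lambda(\sigma)\rangle$ of $Y_1(N)$, and after projection to the $f$-eigenspace (on which the diamond operators act via the nebentypus $\veps$) one obtains $\sigma \cdot z_{\et,m}^{[f,j]} = \veps(\sigma) \cdot z_{\et,m}^{[f,j]}$ under the $\chi_0$-extended action inherited from the $\Gal(\Qb/\cK)$-representation structure of $V_p(f)^*(\chi_0)$.

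Combining these shows $z_{\et,m}^{[f,j]}$ is $\chi$-twisted $\Gal(F_m/\cK_m)$-invariant, and the isomorphism with $H^1(\cK_m, V_p(f)^*(\chi))$ then follows from inflation-restriction, the obstruction group $H^1(\Gal(F_m/\cK_m), V_p(f)^*(\chi)^{\Gal(\Qb/F_m)})$ being killed by $\#\Gal(F_m/\cK_m)$, which divides $\#(\ZZ/N\ZZ)^\times$ and is therefore a unit in $\Qb_p$. The main delicate point will be matching signs: the $\chi/\chi_0$-twist and the diamond-operator eigenvalue must appear with opposite exponents in $\veps$, which depends on the Artin-map normalisation fixed in \S\ref{sect:galreps}, the direction of the CM action on $A_m[\fN]$, and the convention for diamond operators acting on the dual sheaf $\cV_k^\vee$. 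With the conventions of the paper already in place, these compatibilities are a routine verification.
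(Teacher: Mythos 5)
Your proof is correct and is essentially the paper's argument, phrased through the explicit modular interpretation: in both, the geometric input is that $\Gal(F_m/\cK_m)$ acts on the CM datum $(A_m, \phi_m, t_m)$ only through the level structure $t_m$, i.e.~through the diamond operators (the paper phrases this as $\delta_m$ intertwining the Hecke action of $\widehat{\cO}_{p^m}^\times/U_{\fN,p^m}$ on $S_m$ with $\langle a\bmod\fN\rangle$ on $Y_1(N)$), so that on the $f$-eigenspace one gets $\veps$-equivariance, matching $\chi|_{\Gal(\cK^{\ab}/\cK_m)} = \veps\sigma^a\bar\sigma^b$. One small caveat: the auxiliary character $\chi_0 = \sigma^a\bar\sigma^b$ you introduce is a genuine Galois character only of $\Gal(\Qb/F)$, not of $\Gal(\Qb/\cK_m)$ in general (it fails to be trivial on $\cO_{p^m}^\times$ when $\veps(-1)=-1$); since you only use it as a bookkeeping device for the $\veps$-twist on $\Gal(F_m/\cK_m)$, which the paper records directly via the formula for $\chi$ restricted to $\Gal(\cK^{\ab}/\cK_m)$, this is harmless but best avoided.
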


  \begin{proof}
   The 0-dimensional variety $S_m$ has an action of $\widehat{\cO}_{p^m}^\times / U_{\fN, p^m} \cong (\ZZ / N \ZZ)^\times$; and the embedding $\delta_m$ intertwines this with the action on $Y_1(N) \times S_m$ given by $(\langle a \bmod \fN \rangle, a)$, where $\langle a \bmod \fN\rangle$ denotes the action of $(\ZZ/N\ZZ)^\times$  on $Y_1(N)$ via the diamond operators. Since $\chi$ restricts to $\veps \sigma^a \bar\sigma^b$ on $\Gal(\cK^{\ab} / \cK_m)$, this implies that $z_{\et, m}^{[f, j]}$ is invariant under this group when we twist the Galois action by $\veps$. The second isomorphism is an easy consequence of the inflation-restriction exact sequence (since we are working with $\Qp$ coefficients).
  \end{proof}

  \begin{proposition}
   For $(f, \chi)$ a Heegner pair of finite type $(p^m, \fN, \veps)$ and $\infty$-type $(k-j,j)$, we define
   \[ z_{\et}^{[f, \chi]} \coloneqq \operatorname{norm}^{\cK_m}_{\cK}\left(z_{\et, m}^{[f, j]}\right) \in H^1\left(\cK, V_p(f)^*(\chi) \right).
   \]
  \end{proposition}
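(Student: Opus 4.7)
The plan: This statement is effectively a definition --- no theorem is being asserted beyond the fact that the displayed formula makes sense. The substantive content has already been packaged into Proposition \ref{prop:invariant}, which places $z_{\et,m}^{[f,j]}$ in the group $H^1(\cK_m, V_p(f)^*(\chi))$. Given this, all that remains is to apply a standard corestriction map.

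First I would recall that the Gr\"ossencharacter $\chi$ of $\cK$, viewed as a Galois character via the recipe of \S\ref{sect:galreps}, is a character of $\Gal(\Qb/\cK)$ (not merely of some open subgroup). Consequently $V_p(f)^*(\chi)$ is naturally a $\Gal(\Qb/\cK)$-module, and its restriction to $\Gal(\Qb/\cK_m)$ is precisely the coefficient module appearing in Proposition \ref{prop:invariant}. The corestriction (or norm) map $\operatorname{cor}^{\cK_m}_{\cK}: H^1(\cK_m, V_p(f)^*(\chi)) \to H^1(\cK, V_p(f)^*(\chi))$ is then defined in the usual way by Shapiro's lemma (restricted to the unramified-outside-$\Sigma$ cohomology via the abuse of notation introduced in \S\ref{sect:projn}), and one sets $z_{\et}^{[f,\chi]}$ to be the image of $z_{\et,m}^{[f,j]}$ under this map.

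The only point worth verifying is the compatibility of the two descriptions of the $\Gal(\Qb/\cK_m)$-action on the coefficient module: one coming from restricting the Galois character attached to the global $\chi$, and one coming from the explicit twist by $\veps\, \sigma^{k-j} \bar\sigma^j$ used in Proposition \ref{prop:invariant} (where the $\veps$-factor arises from the diamond-operator intertwining of $\delta_m$ with the $(\ZZ/N\ZZ)^\times$-action on $S_m$). This matching is precisely the characterisation of the Galois avatar of $\chi$ recorded in \S\ref{sect:galreps}, namely that the restriction of $\chi$ to $\Gal(\cK^{\ab}/\cK_m)$ is $\veps\,\sigma^{k-j}\bar\sigma^j$ under the given conventions. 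Since this identification is built into the setup, there is nothing further to check: the main (and essentially only) obstacle is making sure the reader is not confused by the slight notational sleight of hand involved in tracking the $\veps$-twist, but once this bookkeeping is done the definition is unambiguous.
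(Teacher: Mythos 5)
Your reading is correct: the paper treats this statement as a definition with no proof attached, precisely because Proposition \ref{prop:invariant} has already placed $z_{\et,m}^{[f,j]}$ in $H^1(\cK_m, V_p(f)^*(\chi))$, after which one simply applies corestriction. Your verification of the compatibility between the global Galois avatar of $\chi$ and the explicit twist $\veps\,\sigma^{k-j}\bar\sigma^j$ is exactly the bookkeeping implicit in \S\ref{sect:galreps}, so the proposal matches the paper's (tacit) reasoning.
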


  For applications to $p$-adic deformation, it is convenient to extend this by defining cohomology classes on the modular curves $Y_1(N(p^n))$ of level $\Gamma_1(N) \cap \Gamma_0(p^n)$, for any $n \ge 0$. If $m \ge n$, then the point on $Y_1(N(p^n))$ corresponding to $\tau_m$ is defined over $F_m$; using this modular curve in place of $Y_1(N)$ in the above constructions, we obtain a class
  \begin{equation}
  \label{eq:Zmn}
   Z_{\et, m, n}^{[a,b]} \in H^2_{\et}\left(Y_1(N(p^n))_{F_m}, \cV_{a,b}^\vee(1) \right);
  \end{equation}
  note that $Z_{\et, m, 0}^{[a,b]}$ is the class $z_{\et, m}^{[a,b]}$ defined above. This definition does not make sense for $m < n$, but one can show that if $n \ge 1$ the points $Z_{\et, m, n}^{[a,b]}$ satisfy
  \[
   \operatorname{norm}^{F_m}_{F_{m+1}}\left(Z_{\et, m+1, n}^{[a,b]}\right) = U_p' \cdot Z_{\et, m, n}^{[a,b]},
  \]
  where $U_p'$ is a Hecke operator (the transpose of the usual $U_p$ with respect to Poincar\'e duality); see Proposition \ref{prop:normrelation} below. So we can \emph{define} $Z^{[a,b]}_{\et, m, n}$ for $m < n$ by this relation, after projecting to the maximal direct summand of $H^2_{\et}\left(Y_1(N(p^n))_{F_m}, \cV_{a,b}^\vee(1) \right)$ on which $U_p'$ acts invertibly (the sum of the generalised eigenspaces with nonzero eigenvalue). In particular, we can thus define classes $z_{\et, m}^{[f_\alpha, j]}$, for $f_\alpha$ a $p$-stabilised eigenform of level $\Gamma_1(N) \cap \Gamma_0(p)$ and $m \ge 0$, and $z_{\et}^{[f_\alpha, \chi]}$ for $(f, \alpha, \chi)$ a $p$-stabilised Heegner pair, in the same way as above.

  \subsubsection{Relating \tp{p}-stabilised and non-\tp{p}-stabilised classes}If $f_\alpha$ is a $p$-stabilisation of a prime-to-$p$ level eigenform $f$, then $V_p(f)^*$ and $V_p(f_\alpha)^*$ are isomorphic as abstract Galois representations, but realised differently as quotients of cohomology of the tower of modular curves. An explicit isomorphism between the two is given by the map $(\Pr_\alpha)_*: V_p(f_\alpha)^* \to V_p(f)^*$ defined in \cite[Definition 5.7.5]{KLZ2}, and a straightforward check gives the following relation, analogous to Theorem 5.7.6 of \emph{op.cit.}:

  \begin{proposition}
   \label{prop:eulerfactor}
   We have
   \[ (\sideset{}{_\alpha}\Pr)_*(z_{\et}^{[f_\alpha, \chi]}) = \cE_p(f_\alpha, \chi) \cdot z_{\et}^{[f, \chi]},\]
   where $\cE_p(f_\alpha, \chi)$ is a non-zero scalar. If $\chi$ is ramified at $p$ this scalar is 1; if $\chi$ is unramified, $\cE_p(f_\alpha, \chi)$ is given by $(1 - \frac{\chi(\fp)}{\alpha}) (1 - \frac{\chi(\fpb)}{\alpha})$ if $p = \fp\fpb$ is split, $(1 - \frac{\chi(p)}{\alpha^2})$ if $p$ is inert, and $(1 - \frac{\chi(\fp)}{\alpha})$ if $p = \fp^2$ is ramified.\qed
  \end{proposition}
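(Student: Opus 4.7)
The plan is to reduce the identity to a geometric computation on the tower of modular curves. Both classes $z_{\et}^{[f_\alpha, \chi]}$ and $z_{\et}^{[f, \chi]}$ are built as norm-compatible sums of Gysin pushforwards via the embedding $\delta_m$ of CM cycles $(A_m, t_m)$ carrying the character $e_m^{[k-j,j]}$, projected to the appropriate eigenspace of the Hecke algebra. The map $(\Pr_\alpha)_*$ of \cite[Definition 5.7.5]{KLZ2} is, by construction, built from the degeneracy maps $Y_1(N(p)) \to Y_1(N)$ on cohomology; since Gysin pushforward along $\delta_m$ commutes with pushforward along these degeneracy maps (the two morphisms in question being transverse), the identity reduces to computing the image of a CM point on $Y_1(N(p))$, equipped with the $\Gamma_0(p)$-structure used in the construction, under this degeneracy pushforward.

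For $\chi$ ramified at $p$ of conductor $p^s$ with $s \ge 1$, we work at level $\cK_m$ for $m \ge s$. Then the $p$-stabilised class $z_{\et,m}^{[f_\alpha,j]}$ is obtained \emph{directly} from the cycle $Z_{\et, m, 1}^{[k-j,j]}$ on $Y_1(N(p))_{F_m}$ (no $(U_p')^{-1}$ appears, since $m \ge 1$). The CM point on $Y_1(N(p))$ comes equipped with the canonical $\Gamma_0(p)$-structure coming from the kernel of the dual of $\phi_m$, and with this choice the degeneracy map is \'etale-locally an isomorphism at the point in question. Hence $\pi_*$ acts as the identity on this point, and $\cE_p(f_\alpha,\chi)=1$.

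For $\chi$ unramified at $p$, the class lives at level $\cK_0 = \cK$. The definition via $U_p'$-eigenspace projection forces
\[ z_{\et, 0}^{[f_\alpha, j]} = \alpha^{-1} \cdot \operatorname{norm}^{F_1}_{F_0}\!\left(Z_{\et, 1, 1}^{[k-j, j]}\right)\Big|_{f_\alpha\text{-part}}, \]
and similar factors of $\alpha^{-1}$ appear at each level of the norm-compatible tower. Pushing the relevant CM point $(A_1, t_1, C)$ down to $Y_1(N)$ via $\pi$ produces the CM point $(A, t)$ we started with, together with additional terms coming from the other CM points in the fibre of $Y_1(N(p))_{\cK} \to Y_1(N)_{\cK}$ above $(A,t)$. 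By the theory of complex multiplication, these extra CM points are parametrised by the primes $\mathfrak{q}$ of $\cK$ above $p$, each contributing a copy of $(A, t)$ twisted by the Galois action of a Frobenius at $\mathfrak{q}$, which acts as $\chi(\varpi_\mathfrak{q})$ on the $\chi$-component. After dividing by the $\alpha^{[\cK_{\mathfrak{q}} : \QQ_p]}$ coming from $U_p'$-renormalisation, one obtains precisely the stated factors $(1 - \chi(\fp)/\alpha)(1 - \chi(\fpb)/\alpha)$, $(1 - \chi(p)/\alpha^2)$, or $(1 - \chi(\fp)/\alpha)$ in the split, inert, and ramified-in-$\cK$ cases respectively.

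The main obstacle is the combinatorial bookkeeping in the unramified case: one must carefully identify the $p+1$ points of $Y_1(N(p))_{\overline\QQ}$ above $(A,t)$, distinguish the (finitely many) CM points among them from the non-CM ones (which make no contribution after projecting to $V_p(f_\alpha)^*\otimes \chi$), and track the Galois action on the CM ones using Shimura reciprocity. Once this local analysis is done, functoriality of the Ancona functor and compatibility of \'etale realisation with Gysin maps give the rest; the argument is strictly parallel to the $p$-stabilisation computation for Kato's Euler system in \cite[Theorem 5.7.6]{KLZ2}, and this is indeed the reason the same notation $(\Pr_\alpha)_*$ is used.
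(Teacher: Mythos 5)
The paper gives no proof of this proposition beyond the remark that it is ``a straightforward check \ldots analogous to Theorem~5.7.6 of [KLZ17],'' so your proposal is being judged against that implicit argument. Your overall strategy — reduce to a geometric computation with CM points, exploit commutativity of the Gysin pushforward with the degeneracy maps, and in the unramified case invert $U_p'$ and track the Galois orbit via Shimura reciprocity — is indeed the right shape and matches the intended reasoning. However, there are two genuine problems in the details.

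First, your explanation of the ramified case is not a valid argument. You claim that ``the degeneracy map is \'etale-locally an isomorphism at the point in question,'' but the degeneracy maps $Y_1(N(p)) \to Y_1(N)$ are finite flat of degree $p+1$ and are nowhere locally isomorphisms in characteristic zero; the pushforward of the single CM point $\tau_m$ is a single point simply because pushforward of a zero-cycle is its image, not because of any local isomorphism. More importantly, identifying $\mathrm{pr}_{1,*}\bigl(\mathrm{pr}_{f_\alpha}(Z_{\et, m, 1}^{[k-j,j]})\bigr)$ with $\mathrm{pr}_f\bigl(\mathrm{pr}_{1,*}(Z_{\et, m, 1}^{[k-j,j]})\bigr) = z_{\et, m}^{[f, j]}$ requires showing that the contribution of the $f_\beta$-eigencomponent $\mathrm{pr}_{1,*}\mathrm{pr}_{f_\beta}(Z_{\et, m, 1}^{[k-j,j]})$ vanishes after the corestriction to $\cK$ and $\chi$-twist — or else a careful analysis of exactly how $(\Pr_\alpha)_*$ is normalised in \cite[Def.~5.7.5]{KLZ2} in relation to both degeneracy maps. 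Your proposal simply asserts $\cE_p = 1$ without addressing this.

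Second, in the unramified case, the sentence asking to ``distinguish the (finitely many) CM points among them from the non-CM ones (which make no contribution after projecting to $V_p(f_\alpha)^*\otimes \chi$)'' misdescribes the geometry. Every point of the fibre of $Y_1(N(p)) \to Y_1(N)$ over the CM point $(A,t)$ is itself a CM point: these are the pairs $(A, t, C)$ as $C$ ranges over the $p+1$ cyclic order-$p$ subgroups of $A$. The relevant dichotomy is between those $C$ that are $\cO_\cK$-stable (namely $A[\fp]$, $A[\fpb]$ in the split case, $A[\fp]$ in the ramified case, none in the inert case), for which $A/C$ has CM by $\cO_\cK$, and those that are not, for which $A/C$ has CM by $\cO_p$. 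It is the $\cO_\cK$-stable ones — the ones you appear to discard as ``non-CM'' — that precisely produce the Euler factors $(1 - \chi(\fp)/\alpha)$ etc.\ via Shimura reciprocity, while the remaining $\cO_p$-CM points form the single $\Gal(F_1/F)$-orbit containing $\tau_1$. You have the classification inverted, and the claimed reason for discarding terms (``make no contribution after projecting'') is false: those terms give exactly the correction factors you want to extract. So while the scaffolding of your argument is sound, as written the unramified computation would not produce the stated formula; it needs to be redone with the correct partition of the fibre.
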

%
%

\section{Interpolating coefficient systems}

 \subsection{Spaces of distributions}

  Let $L$ be a finite extension of $\Qp$, and $k \ge 0$ an integer. Let $T = \cO_L^2$, considered as a left $\GL_2(\cO_L)$-module via the defining representation of $\GL_2$; and recall the explicit model of the $\GL_2(\cO_L)$-module $\Sym^k T$ described above. As we have seen, the dual of $\Sym^k T$ is the module $\TSym^k( T^\vee)$ of symmetric tensors of degree $k$ over $T^\vee$.

  \begin{definition} Let $n \ge 1$.
   \begin{enumerate}
    \item Define a compact open subgroup $K_0(p^n) \subset \GL_2(\Zp)$ by
    \[ K_0(p^n) \coloneqq \left\{ \mtwo a b c d \in \GL_2(\Zp) : c = 0 \bmod p^n\right\}. \]
    \item Let $A_{k, n}$ denote the space of power series in a variable $Z$ (with coefficients in $L$) which converge on $p^n \cO_{\CC_p}$, equipped with a left action of $K_0(p^n)$ via
    \begin{equation}
     \label{eq:wtkaction}
     \mtwo a b c d \cdot f =
     \left(bZ + d\right)^k f\left( \frac{aZ + c}{bZ + d} \right).
    \end{equation}
    We consider $A_{k, n}$ as a Banach space in the supremum norm (as functions on $p^n \cO_{\CC_p}$), with unit ball the $\cO_L$-lattice $A_{k, n}^\circ$ of functions whose supremum norm is $\le 1$.
    \item We consider $\Sym^k T$ as a submodule of $A_{k, n}^\circ$, via $f(X, Y) \mapsto f(Z, 1)$.
    \item We let $D^\circ_{k, n} = \Hom_{\cO_L}\left(A_{k, n}^\circ, \cO_L\right)$, and $D_{k, n} = D^\circ_{k, n} \otimes L = \Hom_{\mathrm{cts}}(A_{k, n}, L)$. We equip these with the dual action of $K_0(p^n)$ defined by
    \[ (\gamma \cdot \phi)(f) = \phi(\gamma^{-1} f) \]
    for all $\gamma \in K_0(p^n)$, $\phi \in D_{k, n}$, and $f \in A_{k, n}$. Dualising the inclusions of (3) gives us $K_0(p^n)$-equivariant \emph{moment} maps
    \[ \mom^k: D^\circ_{k, n} \to \TSym^k(T^\vee). \]
   \end{enumerate}
  \end{definition}

  Note that the action \eqref{eq:wtkaction} is well-defined, since $c \in p^n \Zp$ and $d \in \Zp^\times$, and hence $Z \mapsto \frac{c + aZ}{d + bZ}$ preserves $p^n \cO_{\CC_p}$.

  We can extend the action of $K_0(p^n)$ on $A_{k,0}$ to the monoid $\Sigma_0(p^n) \subset \GL_2(\Qp)$ generated by $K_0(p^n)$ together with the element $\mtwo p001$, which acts on $A_{k, n}$ as $f(Z) \mapsto f(pZ)$. Hence we obtain a left action on $D_{k, n}$ (or $D^\circ_{k, n}$) of the monoid $\Sigma_0'(p^n)$ generated by $K_0(p^n)$ and $\mtwo {p^{-1}}001$. This is compatible with the specialisations $\mom^{k}$.

 \subsection{Interpolation over weight space}
  \label{sect:weightspace}
  Let $\cW$ denote the rigid space parameterising characters of $\Zp^\times$, as above. We shall interpolate the distribution spaces $D_{k, n}$ over $\cW$, following \cite{andreatta-iovita-stevens:overconvES, hansen, loeffler-zerbes:coleman}.

  More precisely, for $n \ge 1$, let $\cW_n$ be the locus in $\cW$ consisting of ``$n$-analytic'' characters, which are the characters $\kappa$ such that $v_p(\kappa(1 + p) - 1) > \frac{1}{p^{n-1}(p-1)}$, where $v_p$ is the valuation such that $v_p(p) = 1$. (Geometrically, $\cW$ is a union of $p-1$ discs of radius 1, and $\cW_n$ is the union of slightly smaller open discs inside each component of $\cW$.) If $\kappa$ is $n$-analytic, then its restriction to each coset $a + p^n \Zp$ is given by a single convergent power series.

  We let $U$ be an open disc contained in $\cW_n$ and defined over $L$. Associated to $U$ is an algebra $\Lambda_U \cong \cO_L[[u]]$ of functions on $U$ bounded by 1, which is a local ring with maximal ideal $m_U$ and finite residue field; and a universal character $\kappa_U : \Zp^\times \to \Lambda_U^\times$. We define $A_{U, n}^\circ$ to be the space of power series $\sum_{s \ge 0} a_s (\tfrac{Z}{p^n})^s$ with $a_s \in \Lambda_U$ tending to 0 in the $m_U$-adic topology. This space has a right action of $\Sigma_0(p^n)$, via the same formulae as before. (This relies crucially on the fact that $\kappa_U$ is $n$-analytic, which follows from the assumption that $U \subseteq \cW_n$.)

  If we let $D^\circ_{U, n}$ be the set of $\Lambda_U$-linear maps $A_{U, n}^\circ \to \Lambda_U$, then $D^\circ_{U, n}$ is a profinite topological $\Lambda_U$-module with a left action of $\Sigma_0'(p^n)$, equipped with a filtration by open $\Sigma_0'(p^n)$-invariant $\Lambda_U$-submodules, and with $\Sigma_0'(p^n)$-equivariant moment maps
  \[ \mom^k: D^\circ_{U, n} \mathop{\hat{\otimes}}_{(\Lambda_U, k)} \cO_L \to \TSym^k T^\vee \]
  for all $k \in U$.  Cf.~\cite[Lemma 4.2.8]{loeffler-zerbes:coleman}.

 \subsection{The eigen-distribution associated to \tp{H}}

  Recall that we have fixed in Notation \ref{notation:CMpts} a point $\tau \in \mathbf{H} \cap \cK$ which is a unit at the primes above $p$, and defined $\tau^* = -1/\bar{\tau}$. Enlarging $L$ if necessary, we suppose that $L$ contains the image of $\cK$ under our embedding $\Qb \into \Qb_p$, so we have a distinguished embedding $\sigma: \cK \into L$.

  Let $m \ge n$. Since $\sigma(\tau^*) \in \cO_L$, ``evaluation at $\sigma(\tau_m^*) = p^m \sigma(\tau^*)$'' is well-defined as a map $A_{k, n}^\circ \to \cO_L$, i.e.~as an element $\mathbf{e}_{k,0,m} \in D_{k, n}^\circ$.

  \begin{proposition}
   We have $\iota_m\left( (\cO_{p^m} \otimes \Zp)^\times\right) \subseteq K_0(p^n)$, and $\iota_m((\cO_{p^m} \otimes \Zp)^\times)$ acts on $\mathbf{e}_{k,0,m}$ via the character $\sigma^{-k}$. Moreover, the image of $\mathbf{e}_{k,0,m}$ under $\mom^k$ is $e_m^{[k, 0]} \in \TSym^k T^\vee$.
  \end{proposition}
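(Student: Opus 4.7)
The plan is to reduce all three claims to explicit matrix computations, using the formula $\iota(a+b\tau) = \begin{pmatrix} a+b(\tau+\bar\tau) & -b\tau\bar\tau \\ b & a \end{pmatrix}$ from Section \ref{sect:groups} together with the conjugation formula $\iota_m = \mtwo{p^{-m}}001 \iota \mtwo{p^m}001$.

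For the containment, any element of $\cO_{p^m}\otimes\Zp$ has the form $u + p^m w\tau$ with $u, w \in \Zp$, since $\tau$ generates $(\OK\otimes\Zp)/\Zp$ by Lemma \ref{lem:iota}. A direct calculation yields
\[ \iota_m(u + p^m w\tau) = \begin{pmatrix} u + p^m w(\tau+\bar\tau) & -w\tau\bar\tau \\ p^{2m} w & u \end{pmatrix}. \]
All entries lie in $\Zp$ since $\tau+\bar\tau$ and $\tau\bar\tau$ are rational integers (the trace and norm of $\tau$); the lower-left entry is divisible by $p^{2m}$; and the determinant equals $\mathrm{Nm}_{\cK/\QQ}(u+p^m w\tau)$, which is a unit whenever $u+p^m w\tau$ is. As $m \ge n$, the matrix sits in $K_0(p^{2m}) \subseteq K_0(p^n)$.

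For the action, set $z_m = \sigma(\tau_m^*) = p^m\sigma(\tau^*)$; this lies in $p^m\cO_L$ (and hence in $p^n\cO_{\CC_p}$, since $m \ge n$ and $\tau^*$ is a $p$-adic unit), so $\mathbf{e}_{k,0,m}$ is a well-defined element of $D_{k,n}^\circ$. For $\gamma \in K_0(p^n)$ with inverse $\gamma^{-1} = \mtwo{a'}{b'}{c'}{d'}$, the definition of the dual action gives $(\gamma\cdot\mathbf{e}_{k,0,m})(f) = (b'z_m+d')^k f\bigl(\tfrac{a'z_m+c'}{b'z_m+d'}\bigr)$. The key observation is that for $\gamma = \iota_m(u+p^m w\tau)$, the column $\binom{z_m}{1}$ is an eigenvector of ${}^t\gamma^{-1}$ with eigenvalue $\sigma(u+p^m w\tau)^{-1}$. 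To see this, combine the eigenvector identity ${}^t\iota(x)^{-1}\binom{\tau^*}{1} = x^{-1}\binom{\tau^*}{1}$ recalled in Section \ref{sect:groups} with the transposed conjugation formula ${}^t\iota_m = \mtwo{p^m}001 \, {}^t\iota \, \mtwo{p^{-m}}001$ and the identity $\mtwo{p^{-m}}001\binom{z_m}{1} = \binom{\sigma(\tau^*)}{1}$, noting that $\iota(u+p^m w\tau)$ has rational entries, so applying the embedding $\sigma$ is harmless. Unpacking the eigenvector relation $\binom{a'z_m+c'}{b'z_m+d'} = \sigma(u+p^m w\tau)^{-1}\binom{z_m}{1}$ yields $\tfrac{a'z_m+c'}{b'z_m+d'} = z_m$ and $b'z_m+d' = \sigma(u+p^m w\tau)^{-1}$; substituting back gives $(\gamma\cdot\mathbf{e}_{k,0,m})(f) = \sigma(u+p^m w\tau)^{-k}\mathbf{e}_{k,0,m}(f)$, which is the action by $\sigma^{-k}$.

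For the moment-map identity, the inclusion $\Sym^k T \hookrightarrow A_{k,n}^\circ$ dehomogenises $P(X,Y)$ to $P(Z,1)$, so $\mom^k(\mathbf{e}_{k,0,m})$ sends $P$ to $P(z_m,1)$. This agrees with $e_m^{\otimes k} = e_m^{[k,0]}$ applied to $P$ under the canonical pairing $\TSym^k((E^2)^\vee) \otimes \Sym^k E^2 \to L$: on pure powers $P = (aX+bY)^k$ both sides yield $(az_m+b)^k$, and such powers span $\Sym^k E^2$, so the two functionals coincide. The only real obstacle throughout is notational bookkeeping --- transposes, left/right conventions, and the identification $(\Sym^k T)^\vee = \TSym^k((E^2)^\vee)$; with these conventions pinned down, each step reduces to a short, direct calculation.
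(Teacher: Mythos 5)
Your argument is correct and follows essentially the same route as the paper's own proof: identify $\mathbf{e}_{k,0,m}$ as "evaluate at $z_m=\sigma(\tau_m^*)$", verify $\binom{z_m}{1}$ is an eigenvector under ${}^t\iota_m$, and read off both the character and the restriction to $\Sym^k T$. The one substantive difference is that you spell out the containment $\iota_m\left((\cO_{p^m}\otimes\Zp)^\times\right)\subseteq K_0(p^n)$ by the explicit conjugated matrix $\begin{pmatrix} u+p^m w(\tau+\bar\tau) & -w\tau\bar\tau \\ p^{2m}w & u\end{pmatrix}$ with determinant $\mathrm{Nm}_{\cK/\QQ}(u+p^m w\tau)$, whereas the paper leaves this step implicit; this fills a small gap in the published exposition. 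You also derive the eigenvector relation for ${}^t\iota_m$ by conjugating the level-$0$ relation ${}^t\iota(x)^{-1}v^*=x^{-1}v^*$ by $\mtwo{p^m}{0}{0}{1}$, while the paper states the resulting identities $a\sigma(\tau_m^*)+c=\sigma(u)\sigma(\tau_m^*)$, $b\sigma(\tau_m^*)+d=\sigma(u)$ directly; these are the same computation presented from two sides. All three claims are established correctly.
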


  \begin{proof}
   Unravelling the definitions, if $u \in \cK \otimes \Qp$ maps to $\mtwo a b c d \in \GL_2(\Qp)$ under $\iota_m$, then $a \sigma(\tau_m^*) + c = \sigma(u) \tau_m^*$ and $b \sigma(\tau_m^*) + d = \sigma(u)$. Thus $\tfrac{a\tau_m^* + c}{b \tau_m^* + d} = \tau_m^*$, so for $f \in A_{k, n}^\circ$ we have
   \[ (\mtwo a b c d \cdot f)(\sigma(\tau_m^*)) = \sigma(b\tau_m + d)^k f(\tau_m^*) = \sigma(u)^k f(\sigma(\tau_m^*)).\]
   Hence $\mtwo a b c d \cdot \mathbf{e}_{k,0,m} = \sigma(u)^{-k}\mathbf{e}_{k,0,m}$ as required.

   The restriction of this linear functional $\mathbf{e}_{k, 0, m}$ to $P_k = \Sym^k T$ is given by evaluation of polynomials at $(X, Y) = (\sigma(\tau^*_m), 1)$, which is exactly the definition of $e_m^{[k, 0]}$.
  \end{proof}

  For an open disc $U \subseteq \cW_n$ as above, we have a class $\mathbf{e}_{U, 0, m}$ defined similarly, for any $m \ge n$. This is an $\iota_m\left( (\cO_{p^m} \otimes \Zp)^\times \right)$-invariant vector in $D_{U, n}^\circ$ which transforms under $(\cO_{p^m} \otimes \Zp)^\times$ by the character $\kappa_U \circ \sigma$ (which is well-defined as a character of $\cO_n^\times$, since $\kappa_U$ is $n$-analytic).

 \subsection{Twisting}

  We have succeeded in $p$-adically interpolating the $e_m^{[k, 0]}$, for varying $k \ge 0$. We shall now extend this by interpolating the $e_m^{[k-j, j]}$, for a fixed $j \ge 0$ and varying integers $k \ge j$, via the following simple trick: we consider the tensor product of the vector $\mathbf{e}_{k-j, 0, m} \in (D_{k-j, n}^\circ)$ with $e_m^{[0, j]} \in \TSym^j(T^\vee)$. This gives, clearly, a vector in the space $D_{k-j, n}^\circ \otimes \TSym^j T^\vee$, on which $\iota_m((\cO_{p^m} \otimes \Zp)^\times)$ acts via $\sigma^{-(k-j)} \bar\sigma^{-j}$; and its image under the composite
  \[ D_{k-j, n}^\circ \otimes \TSym^j T^\vee \xrightarrow{\mom^{k-j} \otimes \mathrm{id}}\TSym^k T^\vee \otimes \TSym^j T^\vee \to \TSym^k T^\vee,\]
  where the last map is the symmetrised tensor product, is $e_m^{[k-j, j]}$. This evidently interpolates over discs $U$, as above. More subtly, for any $j \ge 0$ there is a map of $\Sigma_0'(p^n)$-modules
  \[ \Pi_j: D_{U-j, n} \otimes \TSym^j V^\vee \to D_{U, n}, \]
  the ``overconvergent projector''. (This is the map denoted by $\delta_j^*$ in \cite[\S 5.2]{loeffler-zerbes:coleman}, but we use a different notation here to avoid conflict with the morphism denoted $\delta_m$ elsewhere in this paper.) The map $\Pi_j$ does not preserve the natural $\cO_L$-lattices in general, but its denominator is bounded by an explicit function of $n$ and $j$.

  \begin{definition}
   We define
   \[
    \mathbf{e}_{U, j, m} \coloneqq \Pi_j\left(\mathbf{e}_{U-j, 0, m} \otimes e_m^{[0, j]}\right) \in D_{U, n}.
   \]
  \end{definition}

  By construction, this vector transforms under $(\cO_{p^m} \otimes \Zp)^\times$ via the character $\sigma^{-(\kappa_U - j)}\bar\sigma^{-j}$, and its image under $\mom^{k}$ for $k \in U$ is given by
  \[ \mom^{k}\left( \mathbf{e}_{U, j, m} \right) =
   \begin{cases}
    e_m^{[k-j, j]} & \text{if $k \ge j$,} \\
    0 & \text{if $k < j$.}
   \end{cases}
  \]

  \begin{rmk}
   The class $\mathbf{e}_{U, j, m}$ is independent of $n$: more precisely, the classes with the same name for different $n$'s are compatible under the natural maps $D_{U, n} \to D_{U, n-1}$.

   One can make the value of $\mathbf{e}_{U, j, m}$ on $f \in A_{U, n}$ explicit using the formulae of \emph{op.cit.}: it is a linear combination of the first $j$ derivatives of $f$ at $p^m \tau^*$. However, we shall not need this explicit formula here.
  \end{rmk}

  The following lemma will be crucial in \S\ref{sect:twistcompat} below:
  \begin{lemma}
   \label{lem:congruence}
   Let $h \ge 0$. There is a constant $C$, depending on $n$ and $h$ but independent of $m$, such that
   \[
    \sum_{j = 0}^h (-1)^j \binom{\nabla-j}{h-j} \mathbf{e}_{U, j, m} \in C p^{hm} D^\circ_{U, n},
   \]
   where $\nabla \in \Lambda_U[1/p]$ acts in weight $k$ as multiplication by $k$.
  \end{lemma}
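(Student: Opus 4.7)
The strategy is to produce an explicit distributional formula for $\mathbf{e}_{U, j, m}$, and then collapse the alternating sum in the lemma using a Vandermonde-type identity. The crucial intermediate statement, which I would establish by unpacking the definition $\mathbf{e}_{U, j, m} = \Pi_j(\mathbf{e}_{U-j, 0, m} \otimes e_m^{[0, j]})$ together with the explicit description of the overconvergent projector $\Pi_j$ in \cite{loeffler-zerbes:coleman}, is that for every $f \in A_{U, n}$,
\[
\mathbf{e}_{U, j, m}(f) = \sum_{i = 0}^{j} \binom{\nabla - i}{j - i} \bigl(p^m (\bar\tau^* - \tau^*)\bigr)^i \frac{f^{(i)}(p^m \tau^*)}{i!},
\]
where I abuse notation and write $\tau^*$ and $\bar\tau^*$ for the images of $\tau^*$ in $L$ under the two embeddings $\sigma, \bar\sigma : \cK \hookrightarrow L$. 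One checks this for $j = 0, 1$ by direct computation from the defining formula for $\Pi_1$, and for general $j$ by iterating $\Pi_1$ and projecting $T^{\otimes j} \to \Sym^j T$; consistency with the moment formula $\mom^k \mathbf{e}_{U, j, m} = e_m^{[k-j, j]}$ (which vanishes for $k < j$, matching $\binom{k}{j} = 0$ in that range) provides a sanity check.

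Second, substituting this formula into the lemma's sum and interchanging orders of summation yields
\[
\sum_{j = 0}^{h} (-1)^j \binom{\nabla - j}{h - j} \mathbf{e}_{U, j, m}(f) = \sum_{i = 0}^{h} \bigl(p^m(\bar\tau^* - \tau^*)\bigr)^i \frac{f^{(i)}(p^m \tau^*)}{i!} \cdot T_{i, h}(\nabla),
\]
with $T_{i, h}(\nabla) := \sum_{j = i}^{h} (-1)^j \binom{\nabla - j}{h - j} \binom{\nabla - i}{j - i}$. The combinatorial heart of the proof is the elementary identity $\binom{\nabla - j}{h - j}\binom{\nabla - i}{j - i} = \binom{\nabla - i}{h - i}\binom{h - i}{j - i}$ (a reindexing of Vandermonde). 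Pulling $\binom{\nabla - i}{h - i}$ out of the sum then gives $T_{i, h}(\nabla) = (-1)^i \binom{\nabla - i}{h - i} (1 - 1)^{h - i}$, which vanishes for $i < h$ and equals $(-1)^h$ when $i = h$. All lower-order terms cancel, and one is left with
\[
\sum_{j = 0}^{h} (-1)^j \binom{\nabla - j}{h - j} \mathbf{e}_{U, j, m}(f) = (-1)^h \bigl(p^m(\bar\tau^* - \tau^*)\bigr)^h \frac{f^{(h)}(p^m \tau^*)}{h!}.
\]

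Finally one reads off the integrality bound. Since $\tau^*$ and $\bar\tau^*$ are $p$-adic units (by the conditions on $\tau$ in Lemma \ref{lem:iota}), so is $\bar\tau^* - \tau^*$, and $\bigl(p^m(\bar\tau^* - \tau^*)\bigr)^h$ contributes exactly $p^{hm}$. For $f = \sum_{s \ge 0} a_s (Z/p^n)^s \in A_{U, n}^\circ$ with $a_s \in \Lambda_U$ bounded, a direct expansion gives $f^{(h)}(p^m \tau^*)/h! = \sum_{s \ge h} \binom{s}{h} a_s (p^m \tau^*)^{s - h} / p^{ns}$; the dominant $s = h$ term lies in $p^{-nh}\Lambda_U$ and higher-$s$ terms are better bounded (since $m \ge n$), so $f^{(h)}(p^m\tau^*)/h! \in p^{-nh}\Lambda_U$. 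Consequently the distribution lies in $p^{h(m-n)} D^\circ_{U, n}$, and $C = p^{-nh}$ works independently of $m$. The principal difficulty is Step 1: one has to track $\Pi_j$'s behaviour on \emph{all} of $A_{U, n}$, not merely on polynomials, and it is precisely the appearance of $\binom{\nabla - i}{j - i}$ in the formula that makes the Vandermonde cancellation in the second step produce the clean telescoping.
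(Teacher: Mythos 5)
Your proof is correct and reaches the same conclusion, but by a genuinely more explicit route than the paper's. Both arguments ultimately exploit the same arithmetic fact: $e_m - \bar{e}_m$ lies in $p^m T^\vee$. The paper applies $\Pi_h$ to $\mathbf{e}_{U-h,0,m} \otimes (e_m - \bar e_m)^{\otimes h}$, invokes the abstract boundedness of $\Pi_h$ (its denominator depends only on $n$ and $h$), and uses the identity $\Pi_h(\mathbf{e}_{U-h,0,m}\otimes e_m^{[h-j,j]}) = \binom{\nabla-j}{h-j}\mathbf{e}_{U,j,m}$ from Lemma 5.1.5 of \cite{loeffler-zerbes:coleman} to recognise the result as the required sum. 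You instead unpack $\Pi_j$ into an explicit formula for $\mathbf{e}_{U,j,m}(f)$ as a combination of derivatives, and then show by a Vandermonde rearrangement that the alternating sum telescopes to the single term
\[
(-1)^h\bigl(p^m(\bar\tau^*-\tau^*)\bigr)^h \frac{f^{(h)}(p^m\tau^*)}{h!},
\]
whence the integrality bound is immediate. What your route buys is a completely explicit answer and a sharp constant $C = p^{-nh}$; the paper's version is shorter but delegates the computation to \emph{op.~cit.} and leaves $C$ unspecified. What it costs is the work in your Step 1: you must justify the explicit formula by actually computing $\Pi_j$ on the whole of $A_{U,n}$, not just on polynomials, whereas the paper needs only the abstract boundedness statement; the sanity check against moments is genuinely only a sanity check, since the moment maps $\mom^k$ do not determine an element of $D_{U,n}$ uniquely. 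The derivation of that formula is indeed the ``principal difficulty'' you flag, and it essentially reproves the [LZ16] Lemma 5.1.5 identity in a different guise, so the two proofs are not so far apart as they first appear.

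One small slip: you assert that $\bar\tau^* - \tau^*$ is a $p$-adic unit. This can fail when $p$ ramifies in $\cK$, a case the paper explicitly permits; $\tau^* - \bar\tau^* = (\bar\tau - \tau)/(\tau\bar\tau)$, and while $\tau, \bar\tau$ are units by Lemma~\ref{lem:iota}(iii), the numerator $\bar\tau - \tau$ is controlled by the discriminant and may have positive valuation. Fortunately your argument needs only that $\bar\tau^*-\tau^*$ lie in $\cO_L$ (true, since both $\tau^*$ and $\bar\tau^*$ are integral), which if anything improves the bound; so replacing ``unit'' by ``integral'' repairs the statement without any further change.
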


  \begin{proof}
   We note first that $e_m - \bar{e}_m = p^m \cdot \begin{pmatrix} \tau^* - \bar{\tau}^* \\ 0 \end{pmatrix}$ as elements of $T^\vee \cong (\cO_L)^2$. Thus $e_m - \bar{e}_m \in p^m T^\vee$; so
   \[
     (e_m - \bar{e}_m)^{\otimes h} = \sum_{j = 0}^h (-1)^j e_m^{[h-j, j]} \in p^{mh} \TSym^h T^\vee.
   \]
   On the other hand, we know there is a constant $C$ (depending only on $n$ and $h$) such that
   \[ \Pi_h\left (D^{\circ}_{U-h, n} \otimes \TSym^h T^\vee\right)\subseteq C D^\circ_{U, n}.\]
   Hence we obtain
   \[ \sum_{j = 0}^h (-1)^j \Pi_j\left( \mathbf{e}_{U-h, 0, m} \otimes e_m^{[h-j, j]}\right) \in C p^{mh} D^\circ_{U, n}.\]
   Using the identity
   \[ \Pi_h\left(\mathbf{e}_{U-h, 0, m} \otimes e_m^{[h-j, j]}\right) = \binom{\nabla-j}{h-j} \mathbf{e}_{U, j, m} \]
   (cf.~Lemma 5.1.5 of \cite{loeffler-zerbes:coleman}), we deduce the result.
  \end{proof}

\section{Families of Heegner classes}

 \subsection{Families over weight space}

  As above, let $U$ be an open disc in weight space defined over $L$, and let $\kappa_U$ be the associated universal weight; and suppose that $U \subseteq \cW_n$, for some $n \ge 1$, so that $\kappa_U$ is $n$-analytic in the sense of \S \ref{sect:weightspace}.

  Since $D_{U, n}^\circ$ is a profinite left $K_0(p^n)$-module, we can interpret it as an \'etale sheaf on the modular curve $Y_1(N(p^n))$. For each $m \ge n$ and $j \ge 0$, we can regard $\mathbf{e}_{U, j, m}$ as a section
  \[ \mathbf{e}_{U, j, m} \in H^0_{\et}\left(S_m, \delta_m^*\left(D_{U, n}^\circ \otimes \sigma^{\kappa_U-j} \bar{\sigma}^j\right) \right) \]
  in the notation of \S \ref{sect:pushfwd}, whose image under the moment map $\mom^k$, for any $k \in U$ with $k \ge j$, is the \'etale realisation of $e^{[k-j, j]}_m$. We can thus define
  \[ z_{U, m, n}^{[j]} = (\delta_m)_*\left(\mathbf{e}_{U, j, m}\right) \in H^2_{\et}\left(Y_1(N(p^n))_{F_m}, D_{U, n}(1) \otimes \sigma^{(\kappa_U-j)}\bar\sigma^j\right), \]
  and we automatically obtain the following interpolation property:

  \begin{proposition}
   For every $m \ge n$, and every $k \in U \cap \ZZ_{\ge 0}$, we have
   \[
    \mom^k\left(z_{U, m, n}^{[j]}\right) =
    \begin{cases}
     Z^{[k-j, j]}_{\et, m, n} & \text{if $k \ge j$},\\
     0 & \text{if $k < j$},
    \end{cases}
   \]
   where $Z^{[k-j, j]}_{\et, m, n}$ is as defined in \eqref{eq:Zmn}. \qed
  \end{proposition}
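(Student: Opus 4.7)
The plan is to reduce the claim to the pointwise interpolation property of the distribution-valued class $\mathbf{e}_{U,j,m}$ that was already established in the previous subsection, by exploiting functoriality of the Gysin pushforward $(\delta_m)_*$ with respect to morphisms of coefficient sheaves.

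First, I would record that for each $k \in U \cap \ZZ_{\ge 0}$, specialisation at weight $k$ induces a morphism of $\Sigma_0'(p^n)$-modules
\[
 \mom^k \colon D_{U,n}^\circ \mathop{\hat\otimes}_{(\Lambda_U, k)} \cO_L \longrightarrow \TSym^k T^\vee,
\]
and tensoring with the character $\sigma^{\kappa_U - j}\bar\sigma^j$ (which under specialisation becomes $\sigma^{k-j}\bar\sigma^j$) upgrades this to a morphism of the associated étale sheaves on $Y_1(N(p^n))$. By the formalism of \'etale sheaves on Shimura varieties arising from continuous representations of profinite groups, the coefficient sheaf attached to $D_{U,n}^\circ$ specialises (after inverting $p$) to the sheaf attached to $\TSym^k T^\vee$ via $\mom^k$, and the character factor specialises correspondingly.

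Second, I would invoke the naturality of the Gysin pushforward. The class $z_{U,m,n}^{[j]}$ is defined as $(\delta_m)_*(\mathbf{e}_{U,j,m})$, where $\mathbf{e}_{U,j,m}$ is a section of the pullback sheaf $\delta_m^*\bigl(D_{U,n}^\circ \otimes \sigma^{\kappa_U - j}\bar\sigma^j\bigr)$ on $S_m$. Since the Gysin map for the codimension-$1$ closed embedding $\delta_m \colon S_m \hookrightarrow Y_1(N(p^n)) \times S_m$ is functorial in the coefficient sheaf, the diagram
\[
 \begin{tikzcd}
  H^0_{\et}\bigl(S_m, \delta_m^*(D_{U,n} \otimes \sigma^{\kappa_U-j}\bar\sigma^j)\bigr) \rar["(\delta_m)_*"] \dar["\mom^k"]
  & H^2_{\et}\bigl(Y_1(N(p^n))_{F_m}, D_{U,n}(1)\otimes\sigma^{\kappa_U-j}\bar\sigma^j\bigr) \dar["\mom^k"] \\
  H^0_{\et}\bigl(S_m, \delta_m^*(\TSym^k T^\vee \otimes \sigma^{k-j}\bar\sigma^j)\bigr) \rar["(\delta_m)_*"]
  & H^2_{\et}\bigl(Y_1(N(p^n))_{F_m}, \cV_{k-j,j}^\vee(1)\bigr)
 \end{tikzcd}
\]
commutes. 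Applying it to $\mathbf{e}_{U,j,m}$ reduces the assertion to computing $\mom^k(\mathbf{e}_{U,j,m})$.

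Finally, this last computation is exactly the interpolation formula for $\mathbf{e}_{U,j,m}$ established above: $\mom^k(\mathbf{e}_{U,j,m}) = e_m^{[k-j,j]}$ when $k \ge j$ and $0$ otherwise. Pushing forward via $(\delta_m)_*$ then gives, by the construction of the étale Heegner classes in \S\ref{sect:pushfwd} (applied with $Y_1(N(p^n))$ in place of $Y_1(N)$), exactly $Z^{[k-j,j]}_{\et,m,n}$ in the first case and $0$ in the second, as required. I do not anticipate a serious obstacle here: the argument is essentially a formal consequence of naturality of Gysin maps combined with the already-proven interpolation at the level of $D_{U,n}^\circ$. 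The only mild care needed is to check that the coefficient sheaf attached to the profinite module $D_{U,n}^\circ$ behaves well under specialisation along $\kappa_U = k$, which follows from the continuity of the $K_0(p^n)$-action and the fact that $\mom^k$ is equivariant for $\Sigma_0'(p^n)$.
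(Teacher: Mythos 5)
Your proposal is correct and takes essentially the same route as the paper: the proposition is marked with \qed as an immediate consequence of the construction, and the implicit argument is precisely what you spell out — the moment map $\mom^k$ is $\Sigma_0'(p^n)$-equivariant and hence induces a morphism of \'etale coefficient sheaves, the Gysin pushforward $(\delta_m)_*$ commutes with such morphisms, and the already-stated interpolation formula $\mom^k(\mathbf{e}_{U,j,m}) = e_m^{[k-j,j]}$ (for $k\ge j$, and $0$ otherwise) then does the rest. Nothing to add.
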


  These classes have the following straightforward but crucial norm-compatibility property. Let $\hat Y$ be the modular curve of level $\{ g \in \Gamma_1(N): g = 0 \bmod \mtwo{*}{p}{p^n}{*}\}$; and consider the diagram of modular curves
  \[
   \begin{tikzcd}
    \hat{Y}\rar["\Phi" above, "\cong" below] \dar["\hat{\operatorname{pr}}" left]&
    Y_1(N(p^{n+1}))\dar["\operatorname{pr}" right]\\
    Y_1(N(p^n))
    & Y_1(N(p^n))
   \end{tikzcd}
  \]
  where $\Phi$ is given by the action of $\mtwo p 0 0 1 \in \GL_2(\Qp)$ (which corresponds to $\tau \mapsto \tau/p$ on the upper half-plane). On the cohomology of the $\Qp$-sheaves $\cV_k$ and $\cV_k^\vee$, this diagram gives rise to two Hecke operators: the operator $(\hat{\operatorname{pr}})_* \circ \Phi^* \circ (\operatorname{pr})^*$, which is the usual Hecke operator $U_p$; and the operator $U_p' = (\operatorname{pr})_* \circ (\Phi^{-1})^* \circ (\hat{\operatorname{pr}})^*$, which is the dual of $U_p$ under Poincar\'e duality. See \cite[\S 2.4]{KLZ2} for further details. The action of $U_p'$ makes sense on the cohomology of the sheaves $D_{k, n}$, compatibly with the moment maps, since pullback by $\Phi^{-1}$ corresponds to acting on the sheaf by $\mtwo {p^{-1}} 0 0 1 \in \Sigma_0'(p)$.

  \begin{proposition}
   \label{prop:normrelation}
   For all $m \ge n \ge 1$, we have
   \[ \operatorname{norm}_{F_m}^{F_{m+1}} \left(z_{U, m+1, n}^{[j]}\right) = U_p' \cdot z_{U, m, n}^{[j]}.\]
  \end{proposition}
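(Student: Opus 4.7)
The plan is to reduce the norm relation to two elementary compatibilities---one between the eigen-distributions and the element $\mtwo{p^{-1}}{0}{0}{1} \in \Sigma_0'(p^n)$, and one between the CM points $\tau_m$, $\tau_{m+1}$ and the geometric correspondence defining $U_p'$---and then assemble them via Galois descent.

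First I would verify the identity
\[ \mathbf{e}_{U,j,m+1} = \mtwo{p^{-1}}{0}{0}{1}\cdot \mathbf{e}_{U,j,m} \]
in $D_{U,n}$. For $j = 0$, this is immediate from the explicit formula $\mathbf{e}_{k,0,m}(f) = f(p^m \sigma(\tau^*))$ together with the fact that $\mtwo{p^{-1}}{0}{0}{1}$ acts on $D_{U,n}$ as the dual of the $A_{U,n}$-operation $f(Z) \mapsto f(pZ)$. For general $j$, one checks in parallel that $\mtwo{p^{-1}}{0}{0}{1} \cdot e_m^{[0,j]} = e_{m+1}^{[0,j]}$ in $\TSym^j T^\vee$ (the contragredient action of $\mtwo{p^{-1}}{0}{0}{1}$ on $T^\vee$ is by the transpose--inverse $\mtwo{p}{0}{0}{1}$, which sends $\bar e_m$ to $\bar e_{m+1}$); the $\Sigma_0'(p^n)$-equivariance of the overconvergent projector $\Pi_j$ then upgrades this to the full identity for any $j$.

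Second, I would observe that $\Phi = \mtwo{p}{0}{0}{1}$ acts on the upper half-plane as $\tau \mapsto \tau/p$, and hence sends $\tau_m$ to $\tau_{m+1}$. Consequently, $\tau_m$ lifts canonically to a geometric point $\hat\tau_m \in \hat Y$ with $\Phi(\hat\tau_m) = \tau_{m+1}$ in $Y_1(N(p^{n+1}))$, and the fibre $\hat{\operatorname{pr}}^{-1}(\tau_m)$ consists of $p$ geometric points (the $p$ refinements of the $\Gamma_0$-structure), permuted transitively by $\operatorname{Gal}(F_{m+1}/F_m)$; the lift $\hat\tau_m$ is one of them, so its Galois orbit under $\operatorname{pr} \circ \Phi$ realises the full sum of pushforwards of $\hat{\operatorname{pr}}^{-1}(\tau_m)$ to $Y_1(N(p^n))$.

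Assembling the pieces, the first step identifies $(\delta_{m+1})_*(\mathbf{e}_{U,j,m+1})$ with the class supported at $\hat\tau_m$ obtained from $(\delta_m)_*(\mathbf{e}_{U,j,m})$ by applying $\hat{\operatorname{pr}}^*$ and then $\Phi_* = (\Phi^{-1})^*$; summing over the Galois orbit via $\operatorname{norm}^{F_m}_{F_{m+1}}$ then completes the pullback to the entire fibre $\hat{\operatorname{pr}}^{-1}(\tau_m)$, and pushing forward by $\operatorname{pr}_*$ produces exactly $U_p' \cdot z_{U,m,n}^{[j]}$. The main obstacle will be the careful bookkeeping in this final step: one must verify that the twist by $\mtwo{p^{-1}}{0}{0}{1}$ from the distributional identity exactly cancels the sheaf-theoretic twist introduced by $(\Phi^{-1})^*$ (so that the comparison of coefficients is a genuine equality and not merely an equality up to scalar), and that passing from the Galois orbit of $\tau_{m+1}$ to the full fibre $\hat{\operatorname{pr}}^{-1}(\tau_m)$ does not introduce spurious degree-of-cover factors from the embeddings $\delta_m$, $\delta_{m+1}$.
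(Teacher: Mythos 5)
Your proposal follows the paper's proof essentially step for step: introduce the CM lift $\hat\tau_m$ on $\hat Y$, use the Galois-orbit description of $\hat{\operatorname{pr}}^{-1}(\tau_m)$ to relate the norm to $\hat{\operatorname{pr}}^*$, and unwind $U_p' = \operatorname{pr}_* \circ (\Phi^{-1})^* \circ \hat{\operatorname{pr}}^*$ against the distributional identity $\mathbf{e}_{U,j,m+1} = \mtwo{p^{-1}}{0}{0}{1}\cdot\mathbf{e}_{U,j,m}$ (which you verify in more detail than the paper does, correctly). The only quibble is a loose phrase in the final paragraph — $\hat{\operatorname{pr}}^*$ of $(\delta_m)_*(\mathbf{e}_{U,j,m})$ is supported on the whole fibre, not at $\hat\tau_m$ alone, so the order of identification should be reversed — but your subsequent appeal to the Galois orbit corrects this, and the "cancellation of twists" you flag as an obstacle is in fact exactly the definitional compatibility the paper invokes, not an extra thing to prove.
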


  \begin{proof}
   One computes that the CM point at level $\hat Y$ corresponding to $\tau_m$ is defined over $F_{m+1}$, and its orbit under the action of $\Gal(F_{m+1} / F_m)$ is exactly the preimage under $\hat{\operatorname{pr}}$ of the $\Gamma_1(N(p^n))$-orbit of $\tau_m$. Applying the same constructions as before to this CM point on $\hat Y$ gives a class
   \[
    \hat{z}_{U, m, n}^{[j]} \in H^2_{\et}\left(\hat Y_{F_{m+1}}, D_{U, n}(1) \otimes \sigma^{(\kappa_U-j)}\bar\sigma^j\right)
   \]
   satisfying
   \[ \operatorname{norm}_{F_m}^{F_{m+1}}\left(\hat z_{U, m, n}^{[j]}\right) = \hat{\operatorname{pr}}^*\left( z_{U, m, n}^{[j]}\right).\]
   Hence we have
   \[
    U_p' \cdot z_{U, m, n}^{[j]} =\operatorname{norm}_{F_m}^{F_{m+1}}\left( (\operatorname{pr})_* (\Phi^{-1})^* \hat z_{U, m, n}^{[j]}\right).
   \]
   The image under $\Phi$ of the CM point $\tau_m$ is $\tau_{m+1}$; and the action of $\Phi^{-1}$ on the sheaf sends $\mathbf{e}^{[j]}_{U, m, n}$ to $\mathbf{e}^{[j]}_{U, m+1, n}$. So we can conclude that
   \[  (\operatorname{pr})_* (\Phi^{-1})^* \hat z_{U, m, n}^{[j]} = z_{U, m+1, n}^{[j]}.\]
   Combining these last two formulae gives $\operatorname{norm}_{F_m}^{F_{m+1}} \left(z_{U, m+1, n}^{[j]}\right) = U_p' \cdot z_{U, m, n}^{[j]}$, as required.
  \end{proof}

 \subsection{Projection to a Coleman family}

  Let $f_0$ be a level $N$ newform of some weight $k_0 + 2 \ge 2$, and $\alpha_0$ a root of its Hecke polynomial at $p$. We impose the technical condition that the $p$-stabilisation $(f_0)_{\alpha_0}$ be a \emph{noble eigenform} in the sense of \cite{hansen}, which is automatic if $\alpha_0$ has valuation $< k_0 + 1$ and is distinct from\footnote{This condition $\alpha_0 \ne \beta_0$ is known to be automatically satisfied if $k_0 = 0$, and for all $k_0 \ge 0$ assuming the Tate conjecture \cite{colemanedixhoven98}.} the other root $\beta_0$.

   Then there exists an \emph{affinoid} disc $V \ni k_0$ in $\cW$, and a unique Coleman family $\cF$ of eigenforms over $V$, specialising in weight $k_0$ to $(f_0)_{\alpha_0}$ (equivalently, a lifting of $V$ to a neighbourhood of $(f_0, \alpha_0)$ in $\cE(N)$). Shrinking $V$ if necessary, we may arrange that for every $k \in V \cap \ZZ_{\ge 0}$, the specialisation of $\cF$ at $k$ is a $p$-stabilisation $f_\alpha$ of some level $N$ newform $f$ of weight $k+2$.

  As explained in \cite[\S 4.6]{loeffler-zerbes:coleman}, after possibly further shrinking $V$, we may find an \emph{open} disc $U \supset V$ contained in $\cW_1$ with the following property: the Galois representation
  \[ M_{V} \coloneqq H^1_{\et}\left(Y_1(N(p))_{\Qb}, D_{U, 1}(1)\right) \mathop{\hat{\otimes}}_{\Lambda_U[1/p]} \cO(V)\]
  has a direct summand $M_V(\cF)^*$ which is free of rank 2 over $\cO(V)$, and interpolates the (dual) Galois representations of the classical specialisations $f_\alpha$ of $\cF$. See \cite[\S 4.6]{loeffler-zerbes:coleman} for further details.

  Our assumption that all classical-weight specialisations of $\cF$ are classical forms implies that the image of $z_{U, m, 1}^{[j]}$ in $M_V(\cF)^*$ is divisible by $\binom{\nabla}{j}$ (cf.~\cite[Prop 5.2.5]{loeffler-zerbes:coleman}). So we obtain classes
  \[ z_{\cF, m}^{[j]} \in H^1\left(F_m, M_V(\cF)^* \otimes \sigma^{\kappa_V - j} \bar{\sigma}^j\right), \]
  for all $m \ge 1$ and $j \ge 0$, satisfying
  \[ \operatorname{norm}_{F_m}^{F_{m+1}} \left(z_{\cF, m+1}^{[j]}\right) = \alpha_{\cF} \cdot z_{\cF, m}^{[j]}, \]
  where $\alpha_{\cF} \in \cO(V)^\times$ is the $U_p$-eigenvalue of $\cF$. These have the following interpolation property: let $k \in V \cap \ZZ_{\ge j}$, and let $f_\alpha$ be the specialisation of $\cF$ at $k$. Then the image of $z_{\cF, m}^{[j]}$ under specialisation at $k$ is $\frac{1}{\binom{k}{j}} z_{\et, m}^{[f_\alpha, j]}$.

 \subsection{Interpolation in \tp{j}}
  \label{sect:twistcompat}

  Let $F_\infty = \bigcup_{n \ge 1} F_n$, and let $\Gamma_1^\ac = \Gal(F_\infty / F_1) \cong (\cO_p \otimes \Zp)^\times / \Zp^\times$. This is an abelian group isomorphic to the additive group $\Zp$, and $\chi^\ac = \sigma/\bar{\sigma}$ is a character $\Gamma_1^\ac \to \Qb{}^\times_p$ of infinite order. Let $\cW_1^\ac$ denote the rigid-analytic variety parametrising characters of $\Gamma_1^\ac$, which is equipped with a universal character $\kappa^\ac: \Gamma_1^\ac \to \cO(\cW_1^\ac)^\times$. We regard $\ZZ$ as a subgroup of $\cW_1^\ac(\Qb_p)$ via the powers of $\chi^\ac$, so the specialisation of $\kappa^\ac$ at $n \in \ZZ$ is $(\chi^\ac)^n$.
  \begin{proposition}
   There exists a cohomology class
   \[ z_{\cF, \infty} \in H^1\left(F_1, M_V(\cF)^* \otimes \sigma^{\kappa_V} \htimes \cO(\cW_1^\ac)(-\kappa^\ac)\right)\]
   whose image in $H^1\left(F_m, M_V(\cF)^* \otimes \sigma^{\kappa_V-j} \bar{\sigma}^j\right)$, for any $j \ge 0$ and $m \ge 1$, is $\frac{1}{\alpha_{\cF}^m} z_{\cF, m}^{[j]}$.
  \end{proposition}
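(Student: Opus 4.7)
The plan is to combine two interpolations—one over the tower of ring class fields $F_m$ (using the norm relation of Proposition~\ref{prop:normrelation}) and one over the twist parameter $j \in \ZZ_{\ge 0}$ (using the congruence of Lemma~\ref{lem:congruence})—into a single rigid-analytic family. Both interpolations take place upstairs on the Shimura variety, at the level of the distributions $\mathbf{e}_{U, j, m}$ which produce $z_{\cF, m}^{[j]}$ via pushforward by $\delta_m$ and projection to the Coleman family.

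For the $j$-interpolation, fix $m \ge 1$ and view the sequence $j \mapsto \mathbf{e}_{U, j, m}$ as a $D_{U, n}$-valued function on $\ZZ_{\ge 0} \subset \cW_1^\ac$. The characters $(\chi^\ac)^j$ are Zariski-dense in $\cW_1^\ac$, so the issue is analytic continuation across $\cW_1^\ac$. Lemma~\ref{lem:congruence} bounds the alternating sum $\sum_{j=0}^h (-1)^j \binom{\nabla - j}{h - j} \mathbf{e}_{U, j, m}$ by $Cp^{hm}$; after projection to the Coleman family (which specialises $\nabla$ to its eigenvalue, a rigid-analytic function on $V$) this reduces to a classical Mahler-type bound on the $h$-th finite difference of the sequence, producing a rigid-analytic distribution
\[
 \mathbf{E}_{U, m} \in D_{U, n} \htimes \cO(\cW_1^\ac)(-\kappa^\ac)
\]
whose specialisation at each $j \ge 0$ recovers $\mathbf{e}_{U, j, m}$. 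The Galois twist $(-\kappa^\ac)$ reflects the action $\sigma^{-(\kappa_U - j)}\bar{\sigma}^{-j}$ of $\iota_m((\cO_{p^m} \otimes \Zp)^\times)$ on $\mathbf{e}_{U, j, m}$, whose restriction to $\Gamma_1^\ac$ is $(\chi^\ac)^{-j}$.

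Applying $(\delta_m)_*$, the Hochschild--Serre edge map, and the $\cF$-projection to $\mathbf{E}_{U, m}$ exactly as in \S\ref{sect:pushfwd}--\S\ref{sect:projn} produces a master class
\[
 Z_{\cF, m} \in H^1\!\left(F_m,\, M_V(\cF)^* \otimes \sigma^{\kappa_V} \htimes \cO(\cW_1^\ac)(-\kappa^\ac)\right)
\]
specialising at $j$ to $z_{\cF, m}^{[j]}$. Since the proof of Proposition~\ref{prop:normrelation} is formal in the distribution argument, it lifts verbatim to $\mathbf{E}_{U, m}$, yielding $\operatorname{norm}_{F_m}^{F_{m+1}}(Z_{\cF, m+1}) = \alpha_\cF Z_{\cF, m}$. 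Hence $(\alpha_\cF^{-m} Z_{\cF, m})_{m \ge 1}$ is compatible under corestriction, and Shapiro's lemma for the tower $F_\infty / F_1$—noting that the natural map $\Lambda(\Gamma_1^\ac)^{\iota} \to \cO(\cW_1^\ac)(-\kappa^\ac)$, sending $[\gamma^{-1}]$ to $\kappa^\ac(\gamma)^{-1}$, is $G_{F_1}$-equivariant—assembles this norm-compatible system into the desired class $z_{\cF, \infty}$ with the stated specialisation property.

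The main obstacle is the analytic interpolation in the first step: the operator $\nabla$ in Lemma~\ref{lem:congruence} prevents a direct appeal to Mahler's theorem, and the projector $\Pi_j$ introduces denominators so that $\mathbf{e}_{U, j, m}$ need not lie in $D^\circ_{U, n}$. Both difficulties dissolve after projecting to the rank-two summand $M_V(\cF)^*$ and restricting to the affinoid $V$, where $\nabla$ becomes an analytic function of the weight variable and bounded denominators are absorbed into $\cO(V)$; but carrying out this bookkeeping carefully—and tracking how the single algebra $\cO(\cW_1^\ac)$ encodes both the anticyclotomic $j$-direction and the Iwasawa-theoretic $m$-direction—is the technical heart of the argument. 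The remaining ingredients (pushforward, $\cF$-projection, Shapiro's lemma) are direct extensions of the finite-level constructions already established.
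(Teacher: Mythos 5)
Your proposal attempts to decouple the two directions of interpolation --- first interpolating the distributions $\mathbf{e}_{U,j,m}$ in $j$ for each fixed $m$, and then assembling the resulting classes over the $F_m$-tower via corestriction --- but this decomposition fails, because the paper's estimate (Lemma \ref{lem:congruence}) only becomes useful when the two directions are coupled. The paper does both at once by applying the abstract interpolation machine of \cite[Prop.~2.3.3]{loeffler-zerbes:coleman} to the explicit family $c_{m,j,h} = \alpha_\cF^{-m}\binom{\nabla}{h}z_{\cF,m}^{[j]}$; the resulting class $c(h)$ is divided by $\binom{\nabla}{h}$ only at the very end.

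The specific gap is in your first step. You claim that for a \emph{fixed} $m$, Lemma \ref{lem:congruence} ``reduces to a classical Mahler-type bound'' on the $h$-th finite differences, so that the sequence $j \mapsto \mathbf{e}_{U,j,m}$ extends to a distribution $\mathbf{E}_{U,m}$ over $\cW_1^\ac$. But the lemma bounds the sum $\sum_j (-1)^j \binom{\nabla-j}{h-j}\mathbf{e}_{U,j,m}$ by $Cp^{hm}$ where \emph{$C$ depends on $h$} (it is the denominator of the projector $\Pi_h$). To convert this to a genuine $h$-th difference $\sum_j(-1)^j\binom{h}{j}(\,\cdot\,)$ one must multiply through by $\binom{\nabla}{h}$, whose sup-norm on $V$ also grows with $h$; and the classes $\mathbf{e}_{U,j,m}$ themselves do not lie in $D^\circ_{U,n}$ but have denominators growing in $j$. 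For $m$ fixed, nothing guarantees that $Cp^{hm}\to 0$ as $h\to\infty$. In the paper's argument, the decay is in $m$ (the estimate is $O(p^{-(h-\lambda)m})$ after the $\alpha_\cF^{-m}$ factor is introduced), and the interpolation is extracted from the \emph{tower} $(F_m)_m$ --- not from a single level. Your second bottleneck is dual: the norm-compatible system $\alpha_\cF^{-m}Z_{\cF,m}$ has unbounded denominators (since $|\alpha_\cF^{-1}|>1$ in general), so Shapiro's lemma gives at best a class with coefficients in an unbounded-distribution module, not in $\Lambda(\Gamma_1^\ac)$ or directly in $\cO(\cW_1^\ac)$; passing to the function ring on $\cW_1^\ac$ again requires the quantitative estimate and the vanishing $H^0(F_\infty, M_V(\cF)^*)=0$, which is exactly what Proposition 2.3.3 of \emph{op.\,cit.}\ packages. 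Finally, $\cW_1^\ac$ is the character variety of $\Gal(F_\infty/F_1)$, so the ``$j$-direction'' and the ``$m$-tower'' are not independent parameters; your plan to exhaust them in sequence reflects a conceptual mis-decomposition of the problem rather than a mere change of bookkeeping.
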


  \begin{proof}
   Let $\lambda \ge 0$ be such that $p^\lambda$ is the supremum norm of $\alpha_{\cF}^{-1} \in \cO(V)$. We choose an integer $h \ge \lfloor \lambda \rfloor$, and consider the family of cohomology classes defined by
   \[ c_{m, j, h} = \alpha_{\cF}^{-m} \binom{\nabla}{h} z_{\cF, m}^{[j]},\]
   for $0 \le j \le h$ and $m \ge 1$. Here $\nabla \in \cO(V)$ is the element whose value at a character $\kappa: \Zp^\times \to \CC_p^\times$ is given by $\kappa'(1)$ (cf.~\cite[Proposition 5.2.5]{loeffler-zerbes:coleman}); in particular, $\nabla$ takes the value $n$ at the character $x \mapsto x^n$.

   These classes $c_{m, j, h}$ are norm-compatible in $m$. Moreover, if $\|\cdot\|$ denotes the supremum seminorm on $H^1(F_\infty,  M_V(\cF)^*)$ induced by some choice of Banach $\cO(V)$-module norm on $M_V(\cF)^*$, then these classes satisfy the bound
   \[ \left\|\sum_{j = 0}^h (-1)^j \binom{h}{j}\Res_{F_m}^{F_\infty}(c_{m, j, h}) \right\| = O(p^{-(h-\lambda)m}), \]
   by Lemma \ref{lem:congruence}; note that we are using here the identity
   \[ \binom{h}{j} \cdot \binom{\nabla}{h} = \frac{\nabla(\nabla-1) \dots (\nabla-h+1)}{j! (h-j)!} = \binom{\nabla}{j} \binom{\nabla - j}{h-j}, \]
   so $\sum_{j = 0}^h (-1)^j \binom{h}{j}\operatorname{Res}_{F_m}^{F_\infty}(c_{m, j, h})$ is the image of the quantity $\sum_{j = 0}^h (-1)^j \binom{\nabla-j}{h-j}
    \mathbf{e}_{U, j, m}$ considered in the lemma.

   Since $\cF$ is cuspidal, we have $H^0(F_\infty, M_V(\cF)^*) = 0$; so we can now invoke the general construction of \cite[Proposition 2.3.3]{loeffler-zerbes:coleman} to give a class
   \[ c(h) \in H^1(F_1, M_V(\cF)^* \otimes \sigma^{\kappa_V} \hat{\otimes} D_\lambda(\Gamma_1^\ac)(-\kappa^\ac)) \]
   which interpolates these classes, where $D_\lambda(\Gamma_1^\ac)$ is the subspace of $\cO(\cW_1^\ac)$ consisting of distributions of order $\lambda$. Moreover, if we take two different values $h, h'$, then the classes ${\binom{\nabla}{h}} c(h')$ and $\binom{\nabla}{h'} c(h)$ have the same specialisations at locally algebraic characters of degree up to $\min(h, h') \ge \lfloor \lambda \rfloor$, so they must in fact agree.

   We can map $c(h)$ into the slightly larger module $H^1\left(F_1, M_V(\cF)^* \otimes \sigma^{\kappa_V} \htimes \cO(\cW_1^\ac)(-\kappa^\ac)\right)$. This is the sections of a torsion-free sheaf over $V \times \cW_1^\ac$; and the image of $c(h)$ is divisible by $\binom{\nabla}{h}$ (because sufficiently many of its specialisations are so). Hence the quotient $c(h) / \binom{\nabla}{h}$ is well-defined and independent of $h$, and therefore enjoys an interpolating property at characters of all weights $j \ge 0$.
  \end{proof}

 \subsection{Re-parametrisation of weights}

  Since $\sigma^{\kappa_V}$ is not well-defined as a character of $\Gal(\Qb/\cK)$ but only of $\Gal(\Qb / F_1)$, it is convenient to ``re-parametrise'' $V \times \cW_1^\ac$, as follows.

  Consider the group $\Gamma_{\cK, 1} = (\Zp + p \cO_{\cK, p})^\times$, and let $\cW_{\cK, 1}$ denote its character space. Then we have a short exact sequence of abelian profinite groups
  \[ 1 \to \Zp^\times \to \Gamma_{\cK, 1} \to \Gamma_1^\ac \to 1, \]
  (which is, in fact, split, although there is no canonical splitting). There is therefore a natural map $\cW_{\cK, 1} \to \cW$, whose fibres are the orbits of $\cW^\ac_1$. Over $V$, this morphism admits a canonical section (since $1$-analytic characters of $\Zp^\times$ extend to $\Gamma_{\cK, 1}$). Hence the preimage of $V$ in $\cW_{\cK, 1}$ is isomorphic as a rigid-analytic space to $V \times \cW^\ac_1$.

  On the other hand, we have defined above a space $\cW_\cK(\fN, \veps)$, and restriction of characters defines a finite map $\cW_\cK(\fN, \veps) \to \cW_{\cK, 1}$. Some careful book-keeping shows that if $\tilde V$ denotes the preimage of $V \times \cW^\ac_1$ in $\cW_\cK(\fN, \veps)$, and  $\kappa_{\tilde V}$ is the universal character $\Gal(\cK^{\ab}/ \cK) \to \cO(\tilde V)^\times$, then the pullback of $z_{\cF, \infty}$ to $\tilde{V}$ is an element of $H^1\left(F_1, M_{\tilde V}(\cF)^*\right)$,
  where
  \[ M_{\tilde V}(\cF)^* \coloneqq M_V(\cF)^*\otimes_{\cO(V)} \cO(\tilde V)(\kappa_{\tilde V}).\]
  Moreover, the same argument as Proposition \ref{prop:invariant} shows that this class is invariant under $\Gal(F_1 / \cK_1)$, and hence descends to $H^1\left(\cK_1, M_{\tilde V}(\cF)^*\right)$. We define $z_{\cF}$ to be the image of $z_{\cF, \infty}$ under the norm map $H^1\left(\cK_1, M_{\tilde V}(\cF)^*\right) \to H^1\left(\cK, M_{\tilde V}(\cF)^*\right)$. This is the ``universal'' version of the projection to the $\chi$-component in the definition of the \'etale classes, so one has the following interpolation formula:

  \begin{theorem}[Theorem A]
   Let $x$ be a point of $\tilde V$ corresponding to a $p$-stabilised Heegner pair $(f,\alpha, \chi)$, of finite type $(p^m, \fN)$. Then the specialisation of the class $z_{\cF} \in H^1\left(\cK, M_{\tilde V}(\cF)^*\right)$ at $x$ is given by
   \[
    \frac{\cE_p(f_\alpha, \chi)}{\alpha^{m} \binom{k}{j}} \cdot z_{\et}^{[f, \chi]},
   \]
   where $\cE_p(f_\alpha, \chi)$ is as defined in Proposition \ref{prop:eulerfactor}.
  \end{theorem}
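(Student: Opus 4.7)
The plan is to specialise the universal class $z_\cF$ at the given point $x = (f,\alpha,\chi)$ and unwind the construction layer by layer, tracking the scalar factors that appear. Fix $x$ with $\chi$ of $\infty$-type $(k-j, j)$ and conductor $p^m$, and write $f_\alpha$ for the specialisation of $\cF$ at weight $k$. By the reparametrisation in \S5.4, the universal character $\kappa_{\tilde V}$ specialises at $x$ to the Galois character $\chi: \Gal(\Qb/\cK) \to \Qb_p^\times$, so that $M_{\tilde V}(\cF)^*|_x$ is $V_p(f_\alpha)^*(\chi)$, which the projector $(\Pr_\alpha)_*$ of \cite[Def.~5.7.5]{KLZ2} identifies with $V_p(f)^*(\chi)$.

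The first step is to invoke the interpolation property established in the proposition of \S5.3. Combined with the weight-$k$ specialisation formula for $z_{\cF, m}^{[j]}$ recorded earlier, this should yield that the specialisation of the descent (to $\tilde V$) of $z_{\cF,\infty}$ at $x$ corresponds to the norm-compatible system $\{y_r\}_{r \ge 1}$ in $H^1(\cK_r, V_p(f_\alpha)^*(\chi))$ given by $y_r = \frac{1}{\alpha^r \binom{k}{j}} z_{\et, r}^{[f_\alpha, j]}$, where Proposition \ref{prop:invariant} is used to descend from $F_r$ to $\cK_r$. The norm-compatibility $\operatorname{cor}^{\cK_{r+1}}_{\cK_r}(y_{r+1}) = y_r$ is the direct translation of Proposition \ref{prop:normrelation} (after specialising at weight $k$ and absorbing the $\alpha^r$ factors into the definition of $y_r$).

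Using transitivity of corestriction along $\cK \subseteq \cK_1 \subseteq \cK_m$ and the definition of $z_\cF$ as $\operatorname{cor}^{\cK_1}_\cK$ of the descended $z_{\cF,\infty}$, we then compute
\[
 z_\cF|_x = \operatorname{cor}^{\cK_1}_\cK(y_1) = \operatorname{cor}^{\cK_m}_\cK(y_m) = \frac{1}{\alpha^m \binom{k}{j}} z_\et^{[f_\alpha, \chi]},
\]
the last equality being the very definition of $z_\et^{[f_\alpha, \chi]}$. Finally, Proposition \ref{prop:eulerfactor} relates $z_\et^{[f_\alpha, \chi]}$ to $z_\et^{[f, \chi]}$ via $(\Pr_\alpha)_*$, producing the Euler factor $\cE_p(f_\alpha, \chi)$ and giving the stated formula.

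The main obstacle is the precise justification of the interpolation step: one must verify that the Shapiro-type specialisation of the Iwasawa-theoretic class $z_{\cF,\infty}$ at $x$ produces $\{y_r\}$ with \emph{exactly} the scalar factors $(\alpha^r \binom{k}{j})^{-1}$, with no extraneous factors such as $[\cK_r : \cK_1]$. This follows from the construction of $c(h)$ via Proposition 2.3.3 of \cite{loeffler-zerbes:coleman}, whose specialisations at finite-order characters are \emph{a priori} norm-compatible over the tower, combined with careful book-keeping of the reparametrisation between $V \times \cW_1^\ac$ and $\tilde V \subset \cW_\cK(\fN)$. The case $m=0$ is handled identically using the extension of $z_{\et, 0}^{[f_\alpha, j]}$ via inversion of $U_p'$; the formula is correct with $\alpha^0 = 1$.
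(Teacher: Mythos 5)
Your proposal follows the same overall strategy as the paper: unwind the construction of $z_{\cF}$ layer by layer and track the scalar factors, using the interpolation property of $z_{\cF,\infty}$, the weight-$k$ specialisation formula for $z^{[j]}_{\cF,m}$, and finally Proposition \ref{prop:eulerfactor} to replace $z^{[f_\alpha,\chi]}_{\et}$ with $z^{[f,\chi]}_{\et}$. The paper itself supplies no more argument than this, stating the theorem directly after the construction, so the approach is certainly the intended one.

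There is, however, a wrinkle in your intermediate step that you should tidy up. You introduce the system $y_r = \frac{1}{\alpha^r\binom{k}{j}} z^{[f_\alpha,j]}_{\et,r}$ for all $r \ge 1$ and assert it lies in $H^1(\cK_r, V_p(f_\alpha)^*(\chi))$ via Proposition \ref{prop:invariant}, then compute $z_{\cF}|_x = \operatorname{cor}^{\cK_1}_{\cK}(y_1) = \operatorname{cor}^{\cK_m}_{\cK}(y_m)$. But Proposition \ref{prop:invariant} requires $\chi$ to restrict to $\veps\sigma^{k-j}\bar\sigma^j$ on $\Gal(\cK^{\ab}/\cK_r)$, which holds precisely when $r \ge m$ (the $p$-conductor exponent of $\chi$). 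For $r < m$ the descent from $F_r$ to $\cK_r$ twisted by $\chi$ fails, so $y_1$ as you have written it is not meaningful when $m > 1$, and the equation $\operatorname{cor}^{\cK_1}_{\cK}(y_1) = \operatorname{cor}^{\cK_m}_{\cK}(y_m)$ is circular rather than a computation. The cleaner route is to observe that specialising the Iwasawa class $z_{\cF,\infty}$ at a character whose finite part has conductor $p^m$ involves, via the Shapiro formalism, only the $m$-th layer of the tower: by the interpolation property of \S\ref{sect:twistcompat} it returns $\frac{1}{\alpha^m}z^{[j]}_{\cF,m}$ over $F_m$, which specialises at weight $k$ to $\frac{1}{\alpha^m\binom{k}{j}}z^{[f_\alpha,j]}_{\et,m}$; the descent to $\cK_m$ (now legitimate by Proposition \ref{prop:invariant}) and corestriction to $\cK$ then yield $\frac{1}{\alpha^m\binom{k}{j}}z^{[f_\alpha,\chi]}_{\et}$ directly, without reference to a system $(y_r)$ at smaller levels. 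Your final formula and the application of Proposition \ref{prop:eulerfactor} are correct.
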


  \begin{rmk}
   Recall that $\tilde V$ is the preimage in the 2-dimensional space $\cE_\cK(\fN)$ of a small disc in the $\GL_2/\QQ$ eigencurve $\cE(N)$ around $(f_0, \alpha_0)$. It is not clear to what extent the definition of $z_{\cF}$ can be ``globalized'' over the whole rigid-analytic variety $\cE_\cK(\fN)$.
  \end{rmk}

\section{Local properties of the Heegner classes}
 \label{sect:selmer}

 As a complement to Theorem A, we now study the local properties of the class $z_{\cF}$ at primes of $\cK$. For brevity, throughout this section we shall write $\cM$ for the $\cO(V)$-linear $\Gal(\Qb/\QQ)$-representation $M_V(\cF)^*$, and $\wM$ for the $\cO(\tilde V)$-linear representation $\Gal(\Qb/\cK)$-representation $M_{\tilde V}(\cF)^*$.

 \subsection{Local conditions outside \tp{p}}

  \begin{proposition}
   \label{prop:selmer1}
   For any prime $v \nmid p$ of $\cK$, the image of $z_{\cF}$ under the map
   \[ H^1\left(\cK, \wM\right)  \to
      H^1\left(I_v, \wM\right) \]
   is 0, where $I_v$ is an inertia group at $v$.
  \end{proposition}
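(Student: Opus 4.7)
The natural strategy is a Zariski-density argument that reduces to the known fact that classical étale Heegner classes are unramified at primes away from $p$. The restriction-to-inertia map $\rho_v: H^1(\cK, \wM) \to H^1(I_v, \wM)$ is $\cO(\tilde V)$-linear, and by Theorem A its specialisation at a classical point $x \in \tilde V$ corresponding to a $p$-stabilised Heegner pair $(f, \alpha, \chi)$ is a nonzero scalar multiple of the image of $z_{\et}^{[f, \chi]}$ in $H^1(I_v, V_p(f)^*(\chi))$.

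The first step is to check vanishing at classical points. The class $z_{\et}^{[f,\chi]}$ is the étale Abel--Jacobi image of the generalised Heegner cycle $\Delta_{\phi_m}$, which is supported on the CM cycle $S_m \hookrightarrow Y_1(N) \times S_m$. The proposition in the CM points subsection shows that the CM elliptic curve $A$ (and hence each isogenous $A_m$) has good reduction at every prime of $F$ lying over $v \nmid p$, since $\sigma$ and $\bar\sigma$ are unramified outside $p$. Consequently the pair $(A_m, t_m)$ extends to a point on the smooth locus of an integral model of $Y_1(N)$ at $v$, so the cycle $\Delta_{\phi_m}$ extends to a cycle on a smooth integral model of the Kuga--Sato variety locally at $v$; its étale Abel--Jacobi image therefore lies in $H^1_{\mathrm{ur}}(\cK_v, V_p(f)^*(\chi))$ and so vanishes under restriction to $I_v$.

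To propagate this to the family I would argue as follows. Since $v \nmid p$ and $\wM$ is a continuous $p$-adic representation, Grothendieck's monodromy theorem forces the wild inertia at $v$ to act through a finite quotient on $\wM$, and the tame quotient $I_v^t$ is topologically procyclic with generator $\sigma_v$. Hence $H^1(I_v, \wM)$ is computed as the cokernel of $\sigma_v - 1$ acting on the $I_v^{\mathrm{wild}}$-invariants of $\wM$, a cokernel of a map of free $\cO(\tilde V)$-modules (modulo a finite group action), carrying a natural $\cO(\tilde V)$-module structure compatible with specialisation at classical points. Since the classical $p$-stabilised Heegner pairs are Zariski-dense in $\tilde V$ (combining density of classical specialisations in the Coleman base $V$ with density of algebraic Grössencharacters in the anticyclotomic fibre), and $\rho_v(z_\cF)$ specialises to $0$ at every such point, the section $\rho_v(z_\cF)$ must vanish, provided one has a suitable rigidity (e.g.~torsion-freeness) of $H^1(I_v, \wM)$.

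The main obstacle is precisely this last rigidity step: a priori $H^1(I_v, \wM)$ can have $\cO(\tilde V)$-torsion, so vanishing at a Zariski-dense set of points does not automatically force vanishing of the section. A cleaner alternative that sidesteps this is to produce the unramified cocycle directly from the geometry: recall that $z_\cF$ is built as a normed limit (in $m$) of pushforwards along $\delta_m$ of the classes $\mathbf{e}_{U,j,m}$ on $S_m$, and the good reduction of $S_m$ at all primes above $v \nmid p$ should already imply that each such pushforward is unramified at $v$ before passage to the family. Making this integral statement compatible with the overconvergent distribution sheaves $D_{U,n}$ is the delicate point, but once done it yields the result without appealing to density.
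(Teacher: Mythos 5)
Your argument is not the one the paper uses, and it has a gap that you do not flag. Let me separate the two cases. For $v \nmid Np$ (i.e.\ $v \notin \Sigma$), you are right and this is essentially trivial: the paper simply notes that $z_{\cF}$ is constructed in the cohomology of a smooth scheme over $\cO_{\cK}[1/\Sigma]$, so unramifiedness is built into the construction. Your ``cleaner alternative'' at the end is correct here.

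The problematic case is $v \mid N$. You treat it by the same good-reduction argument: since $A_m$ has good reduction above $v$, you claim the cycle $\Delta_{\phi_m}$ extends to a smooth integral model of the Kuga--Sato variety at $v$, and conclude the Abel--Jacobi image is unramified. But $Y_1(N)$ and hence the Kuga--Sato variety $W_k$ have \emph{bad} reduction at primes dividing $N$, so there is no such smooth model; good reduction of the CM curve $A$ alone does not give you this. Indeed, $V_p(f)^*$ itself is in general ramified at $\ell \mid N$, so unramifiedness of the class $z_{\et}^{[f,\chi]}$ at such $v$ is not a formal consequence of the geometry and would require a separate argument (local-global compatibility, weight--monodromy, or similar). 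So even your base case at classical points is not established. You correctly identify a second gap — that vanishing at a Zariski-dense set of points in $\tilde V$ does not force a section to vanish unless you control torsion of $H^1(I_v,\wM)$ — but the difficulty already appears before that step.

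The paper's route is quite different and avoids both issues. For $v \mid N$, the Heegner hypothesis forces $v$ to be split in $\cK$, and a split prime of $\cK$ (away from $p$) has \emph{infinite} decomposition group in the anticyclotomic $\Zp$-extension $\cK_\infty/\cK$. Since $z_{\cF}$ is obtained as a norm-compatible limit (a class in Iwasawa cohomology over that tower), the general rigidity result \cite[Prop.\ 2.4.4]{loeffler-zerbes:coleman} then forces the restriction to $I_v$ to vanish, with no geometric input about the bad fibres at $v$ whatsoever. This Iwasawa-theoretic argument is exactly what replaces your missing good-reduction step and your missing Zariski-density rigidity. If you want a working proof along your lines you would need to supply either (a) a proof that the classical Heegner classes are unramified at bad primes $v\mid N$, together with a torsion-freeness statement for $H^1(I_v,\wM)$, or (b) adopt the anticyclotomic-tower argument as the paper does.
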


  \begin{proof}
   Let $\ell$ be the rational prime below $v$. The claim is automatic if $\ell \notin \Sigma$, where $\Sigma$ was the set of primes fixed in \S \ref{sect:projn}, since we have constructed $z_{\cF}$ using the \'etale cohomology of a smooth $\OK[1/\Sigma]$-scheme.

   So it remains to treat the case of $\ell \in \Sigma$, and since $\ell \ne p$, this implies that $\ell \mid N$. By our Heegner hypothesis, all such primes are split in $\cK$. It follows that $v$ has infinite decomposition group in the anticyclotomic $\Zp$-extension of $\cK$; so $z_{\cF, \infty}$ and hence also $z_{\cF}$, are unramified at $v$ by the same argument as \cite[Proposition 2.4.4]{loeffler-zerbes:coleman}.
  \end{proof}

 \subsection{Phi-Gamma modules and triangulations}

  In order to study the local properties of the Heegner class at $p$, we recall some results on $p$-adic Hodge theory of $\cM$ as a representation of $G_{\Qp} = \Gal(\Qb_p / \Qp)$, following \cite[\S 6]{loeffler-zerbes:coleman}. These results will also be needed in \S\ref{sect:ERL} below.

  We first summarise the concepts of $p$-adic Hodge theory we will use. See e.g.~\cite[\S 2.2]{bellaichechenevier09} for further details.
  \begin{itemize}
   \item For $E$ a finite extension of $\Qp$, let $\cR_E = \mathbf{B}^{\dag}_{\rig, E}$ be the Robba ring. There is a full faithful functor $\Drig$, compatible with tensor products and duals, which embeds the category of $\Qp$-linear representations of $\Gal(\Qb_p/E)$ inside the larger category of $(\varphi, \Gamma)$-modules over $\cR_E$.
   \item The functor $\Drig$ extends naturally to locally free families of Galois representations over rigid spaces $X$ (taking values in locally free sheaves of $(\varphi, \Gamma)$-modules over $\cR_E \htimes \cO(X)$).
   \item There exist cohomology functors $H^i(E, -)$ for $(\varphi, \Gamma)$-modules, compatible with the Galois cohomology functors via $\Drig$. The cohomology groups of a family of $(\varphi, \Gamma)$-modules over $X$ are coherent sheaves on $X$ (by \cite{KPX}), and there is a descent spectral sequence relating these to the cohomology of any specialisation of the family.
   \item The Fontaine functor $\Dcris$, can be defined on $(\varphi, \Gamma)$-modules over $\cR_E$ (consistently with its existing definition for Galois representations), so it makes sense to speak of a $(\varphi, \Gamma)$-module being crystalline.
   \item If $E'$ is an extension of $E$ then $\cR_E$ is a subring of $\cR_{E'}$, and restriction of Galois representations corresponds to base-extension of $(\varphi, \Gamma)$-modules. If $E / \Qp$ is unramified, then $\cR_E = \cR_{\Qp} \otimes E$, with $\varphi$ acting on $E$ as the arithmetic Frobenius (and $\Gamma$ acting trivially on $E$).
  \end{itemize}

  \begin{theorem}[{Ruochuan Liu, \cite[Theorem 0.3.4]{liu15}}]\label{thm:liu}
   If the disc $V \ni k_0$ is sufficiently small, the Galois representation $\cD_p = \Drig(\cM |_{G_{\Qp}})$ over $\cR \htimes \cO(V)$ is ``trianguline'': it admits a submodule $\cD_p^+$ stable under $\varphi$ and $\Gamma$ such that both $\cD_p^+$ and $\cD_p^- \coloneqq \cD_p / \cD_p^+$ are locally free of rank 1.
  \end{theorem}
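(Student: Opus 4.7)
The plan is to invoke Liu's theorem on global triangulations of refined families directly, after verifying that its hypotheses are met by the Coleman family at hand. I would proceed in four steps.

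First, I would fix the input data. The restriction $\cM|_{G_{\Qp}}$ is a locally free $\cO(V)$-linear family whose specialisations at classical weights $k \in V \cap \ZZ_{\ge 0}$ are Deligne's crystalline representations $V_p(f)^*$ (crystalline since $p \nmid N$), with Hodge--Tate weights $\{0, 1+k\}$. The Coleman family $\cF$ supplies, at each such classical point, a preferred Frobenius eigenvalue on $\Dcris(V_p(f)^*)$: namely the $U_p$-eigenvalue $\alpha = \alpha_{\cF}(k)$ of the chosen $p$-stabilisation. These data constitute a refined family in the sense of Bella\"iche--Chenevier.

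Second, I would transport the picture to $(\varphi,\Gamma)$-modules via Berger's equivalence. At each classical point, the choice of $\alpha$ picks out a unique rank-$1$ crystalline sub $(\varphi,\Gamma)$-module $\cD^+_{p,k} \subset \Drig(V_p(f)^*|_{G_{\Qp}})$ with $\Dcris(\cD^+_{p,k})$ equal to the $\alpha$-eigenline of $\varphi$; dually, the quotient is the crystalline rank-$1$ submodule corresponding to $\beta = p^{k+1}\veps(p)/\alpha$. The task is to interpolate these pointwise triangulations into a global one over $V$.

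Third, I would verify the non-criticality condition at the base point. The noble hypothesis on $(f_0,\alpha_0)$ forces $v_p(\alpha_0) < k_0+1$, so the $\alpha_0$-refinement is of non-critical slope, and moreover $\alpha_0 \ne \beta_0$ ensures uniqueness of the refinement at the base point. These are precisely the hypotheses under which Liu's theorem (\cite[Theorem 0.3.4]{liu15}, building on Kedlaya--Pottharst--Xiao and Bella\"iche) produces, after possibly shrinking $V$, a global $(\varphi,\Gamma)$-stable submodule $\cD_p^+ \subset \cD_p$ over $\cR \htimes \cO(V)$ which is locally free of rank $1$ and specialises at each classical point to $\cD^+_{p,k}$.

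Fourth, I would handle the quotient $\cD_p^- \coloneqq \cD_p/\cD_p^+$. A priori this is only coherent; but its specialisations at classical points are rank-$1$ $(\varphi,\Gamma)$-modules, and over an affinoid disc a coherent sheaf of generically constant rank that is a quotient of a locally free sheaf by a locally free saturated sub becomes locally free after shrinking $V$. The main obstacle is the interpolation step of the third paragraph, which is the substantive content of Liu's theorem and is invoked here as a black box; the essential technical input we must supply from our side is the non-criticality of the refinement at the noble base point $(f_0,\alpha_0)$, together with the crystallinity of the classical specialisations (guaranteed by $p \nmid N$).
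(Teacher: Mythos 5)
The paper does not actually prove this statement: it is cited verbatim from Liu's work on triangulation of refined families, and the surrounding text merely records the consequences. Your proposal is therefore a reconstruction of the hypothesis-checking needed to invoke Liu's theorem as a black box, which is exactly what the paper takes as read; in particular, your identification of the nobility hypothesis on $(f_0,\alpha_0)$ as supplying both the non-criticality (via $v_p(\alpha_0) < k_0+1$) and the regularity ($\alpha_0 \ne \beta_0$) needed for Liu's theorem matches the set-up the paper makes in the preceding section on Coleman families.

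Two small cautions. First, on your second step: the paper works throughout with the \emph{dual} representation $V_p(f)^*$, whose crystalline Frobenius eigenvalues are $\alpha^{-1}$ and $\beta^{-1}$ rather than $\alpha$ and $\beta$; indeed the paper subsequently identifies $\Dcris(\cD^+_{p,k})$ with the $\varphi = a_p(\cF)(k)/\bigl(p^{k+1}\veps(p)\bigr) = \beta^{-1}$ eigenspace of $\Dcris(V_p(f)^*)$, not ``the $\alpha$-eigenline'' as you write. This is a bookkeeping issue introduced by the duality, not a conceptual error, but it does matter later when comparing with the explicit description of $\cD_p^+$ and $\cD_p^-$ that the paper records after the theorem. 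Second, your fourth step is superfluous: Liu's theorem as cited already yields that the quotient $\cD_p^-$ is locally free of rank $1$, so there is no need to argue separately from saturatedness and generic constancy of rank (and that argument as sketched would in any case require more care over the non-Noetherian coefficient ring $\cR \htimes \cO(V)$).
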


  Liu's construction also gives a precise description of $\cD_p^+$: it has a basis vector on which $\varphi$ acts as $\veps(p)^{-1} \cdot a_p(\cF)$, and $\Gamma$ acts via the $\cO(V)^\times$-valued character $x \mapsto x^{\kappa_V + 1}$. On the other hand, $\cD_p^-$ has a basis vector which is $\Gamma$-invariant and on which $\varphi$ acts as $a_p(\cF)^{-1}$.

  So $ \cD_p^-$ is crystalline as a family of $(\varphi, \Gamma)$-modules; and $\cD_p^+$ is not crystalline as a family, but it is the twist of a crystalline family by the family of characters $\chi_{\mathrm{cyc}}^{(1 + \kappa_V)}$ where $\kappa_V$ is the universal character $\Zp^\times \to \cO(V)^\times$. Since integer powers of the cyclotomic character are crystalline, the specialisation $\cD_{p, k}^+$ of $\cD_p$ at any integer $k \in V \cap \ZZ$ is crystalline. More precisely, if $k \in V \cap \ZZ_{\ge 0}$ corresponds to a $p$-stabilisation of a classical level $N$ eigenform $f$, then $\Dcris(\cD_{p, k}^+)$ is canonically identified with the $\varphi =\frac{a_p(\cF)(k)}{p^{k+1}\veps(p)}$ eigenspace in $\Dcris( V_p(f)^*)$, where $f$ is the weight $k$ specialisation of $\cF$.

  \begin{rmk}
   Here we have identified $\Dcris( V_p(f)^*)$ with $\Dcris( V_p(f)^* \otimes \chi_{\mathrm{cyc}}^{-(1+k)})$. This identification depends on a choice of basis of $\Zp(1)$, i.e.~a compatible system of roots of unity $(\zeta_{p^n})_{n \ge 1}$; we choose $\zeta_{p^n}$ to be the image of $\exp(2 \pi i / p^n) \in \Qb \subset \CC$ under our embedding $\Qb \into \Qb_p$.
  \end{rmk}

  \begin{theorem}
   \label{thm:ES}
   There exists a canonical isomorphism of $\cO(V)$-modules
   \[ \omega_{\cF}': \Dcris\left(\cD_p^+ \otimes \chi_{\mathrm{cyc}}^{-(1 + \kappa_V)}\right) \xrightarrow{\ \sim\ } \cO(V), \]
   whose specialisation at any $k \in V \cap \ZZ_{\ge 0}$ coincides with the map
   \[ \Dcris( \cD_{p, k}^+) \into \Dcris(V_p(f)^*) \xrightarrow{\omega_f'} L,\]
   where $f$ is the weight $k$ specialisation of $\cF$, and $\omega_f' \in \operatorname{Fil}^1 H^1_{\mathrm{dR}, c}(Y_1(N), \cV_k)[f] \cong \Dcris(V_p(f))$ is the $L$-rational differential form associated to $f$ as in \cite{KLZ1a}.
  \end{theorem}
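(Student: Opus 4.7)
The plan is to construct $\omega_{\cF}'$ directly from Liu's canonical triangulation and to verify the classical interpolation via the Andreatta--Iovita--Stevens comparison theorem relating overconvergent étale cohomology to classical $p$-adic Hodge theory.

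First I would use the explicit description of $\cD_p^+$ given after Theorem \ref{thm:liu}: there is a canonical basis element $\eta_{\cF}^+$ of $\cD_p^+$ on which $\varphi$ acts by $\veps(p)^{-1} a_p(\cF) \in \cO(V)^\times$ and $\Gamma$ acts through the character $x \mapsto x^{1+\kappa_V}$. Let $e$ denote the canonical basis of the rank-one $(\varphi,\Gamma)$-module over $\cR \htimes \cO(V)$ associated to $\chi_{\mathrm{cyc}}^{-(1+\kappa_V)}$. In the twist $\cD_p^+ \otimes \chi_{\mathrm{cyc}}^{-(1+\kappa_V)}$ the vector $\eta_{\cF}^+ \otimes e$ becomes $\Gamma$-invariant, while $\varphi$ acts on it as a unit in $\cO(V)$; it therefore lies in $\Dcris$ of the twist and generates this module freely over $\cO(V)$. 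I would then define $\omega_{\cF}'$ to be the unique $\cO(V)$-linear map sending $\eta_{\cF}^+ \otimes e$ to $1$.

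To verify the interpolation, I fix $k \in V \cap \ZZ_{\ge 0}$ at which $\cF$ specialises to a $p$-stabilisation $f_\alpha$ of a newform $f$ of weight $k+2$. Then $\cD_{p,k}^+$ becomes the $(\varphi,\Gamma)$-submodule of $\Drig(V_p(f)^*|_{G_{\Qp}})$ cut out by the crystalline sub-representation on which $\varphi$ acts as $\alpha/p^{k+1}\veps(p)$, and the canonical embedding
\[
\Dcris(\cD_{p,k}^+ \otimes \chi_{\mathrm{cyc}}^{-(1+k)}) \hookrightarrow \Dcris(V_p(f)^*),
\]
normalised by the chosen compatible system of $p$-power roots of unity, sends the specialisation of $\eta_{\cF}^+ \otimes e$ into the $\varphi = \alpha/p^{k+1}\veps(p)$-eigenspace. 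What remains is to show that this specialisation equals the $\varphi$-eigenvector dual to $\omega_f'$ under the de Rham pairing.

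The main obstacle is precisely this last normalisation check. My approach is to exploit the construction of $M_V(\cF)^*$ from overconvergent étale cohomology: by the formalism of \cite{andreatta-iovita-stevens:overconvES}, the family $M_V(\cF)^*$ carries an overconvergent Hodge filtration whose specialisations recover the classical Hodge filtration of $V_p(f)^*$, and Liu's triangulation identifies $\cD_p^+$ with the image of this filtration under the $p$-adic comparison morphism. Consequently $\eta_{\cF}^+$ corresponds in families to a canonical section of the filtered sheaf whose classical-weight specialisations are the differentials $\omega_f'$ up to explicit scalars determined by the $\varphi$-normalisation. Matching those scalars through the Eichler--Shimura isomorphism is essentially the content of the analogous construction in \cite[\S6]{loeffler-zerbes:coleman}, and it is here that all the genuinely delicate bookkeeping is concentrated.
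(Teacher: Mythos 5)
Your proposal ultimately lands on the same tools as the paper (Liu's triangulation plus the overconvergent Eichler--Shimura isomorphism of Andreatta--Iovita--Stevens, packaged as in \cite[Theorem 6.4.1]{loeffler-zerbes:coleman}), so the route is essentially the paper's. One point worth flagging: in the middle step you assert ``there is a canonical basis element $\eta_{\cF}^+$ of $\cD_p^+$'' and define $\omega_{\cF}'$ by declaring $\eta_{\cF}^+ \otimes e \mapsto 1$. Liu's theorem only furnishes such a basis up to a unit of $\cO(V)$ (the ambiguity being a $\Gamma$-invariant $\varphi$-fixed scalar), so this step as stated does not yet produce a \emph{canonical} map. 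The genuine content of the theorem is exactly the canonical trivialisation, and that comes entirely from the overconvergent Eichler--Shimura isomorphism, which supplies a distinguished generator compatible with the $q$-expansion normalisation of $\omega_f'$. Your final paragraph recognises this and correctly relocates the burden onto AIS plus the bookkeeping in \cite[\S 6]{loeffler-zerbes:coleman}; so the argument works, but it would be cleaner to let the Eichler--Shimura isomorphism \emph{define} $\omega_{\cF}'$ from the outset, with Liu's result merely supplying the $(\varphi,\Gamma)$-theoretic description of the source, rather than treating the normalisation as a post-hoc check on an already ``defined'' map.
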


  This is a straightforward consequence of the overconvergent Eichler--Shimura isomorphism of Andreatta--Iovita--Stevens \cite{andreatta-iovita-stevens:overconvES}, combined with Liu's results; see \cite[Theorem 6.4.1]{loeffler-zerbes:coleman}.

  \begin{rmk}
   \label{rmk:gausssum}
   Note that the $q$-expansion of $\omega'_f$ at the cusp $\infty$ is $G(\veps^{-1})\cdot f(q)$; the Gauss sum is needed since the cusp $\infty$ is not $\QQ$-rational in our model of $Y_1(N)$. See \cite[Remark 6.1.4]{KLZ1a} for further discussion.
  \end{rmk}

 \subsection{Local conditions at \tp{p}}
  \label{sect:triangulations}

  Let $\fp \mid p$ be a prime of $\cK$. We let $\wD_{\fp} = \Drig\left(\wM |_{G_{\cK_{\fp}}}\right)$, which is a rank 2 $(\varphi, \Gamma)$-module over $\cR_{\cK_{\fp}} \htimes \cO(\tilde V)$, and we let $\wD_{\fp}^+$ be the rank 1 submodule given by Theorem \ref{thm:liu}.

  \begin{definition}
   By a \emph{classical point} of $\tilde V$, of weight $(a, b)$, we shall mean a point of $\tilde V$ corresponding to a $p$-stabilised Heegner pair $(f, \alpha, \chi)$, where $(f, \alpha)$ is a choice of $p$-stabilisation of some newform $f$ of level $N$ and weight $a + b + 2$, and $\chi$ is an algebraic Gr\"ossencharacter of $\infty$-type $(a, b)$. We write $\beta = p^{a + b + 1} \veps(p) / \alpha$ for the other root of the Hecke polynomial of $f$ at $p$.

   If $\chi$ has conductor prime to $p$, then we say $(f, \alpha, \chi)$ is a \emph{crystalline point}.
  \end{definition}

  \begin{proposition}
   \label{prop:bkselmer}
   Let $x = (f, \alpha, \chi)$ be a classical point, of weight $(a, b)$, with $a, b \ge 0$. Then the map $H^1(\cK_{\fp}, \wD_{\fp, x}^+) \to H^1(\cK_{\fp}, \wM_x)$ induced by \(\wD_{\fp}^+ \into \wD_{\fp}\) is an injection. Moreover, the Bloch--Kato subspaces $H^1_{\mathrm{e}}(\cK_{\fp}, \wM_x)$, $H^1_{\f}(\cK_{\fp}, \wM_x)$, and $H^1_{\mathrm{g}}(\cK_{\fp}, \wM_x)$ are all equal to the image of this map.
  \end{proposition}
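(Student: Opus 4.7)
The plan is to reduce to a computation for rank-one $(\varphi, \Gamma)$-modules via the structure theory of trianguline representations (Bella\"iche--Chenevier, Benois, Nakamura). As a preliminary step, I would compute the Hodge--Tate weights and (after suitable cyclotomic twists) the Frobenius eigenvalues of the specialisations $\wD^{\pm}_{\fp, x}$: combining the explicit description of $\cD_p^{\pm}$ recalled after Theorem \ref{thm:liu} with the Hodge--Tate data of $\chi$, one finds that $\wD^+_{\fp, x}$ has all Hodge--Tate weights strictly positive, while $\wD^-_{\fp, x}$ has all Hodge--Tate weights $\le 0$; each becomes crystalline after twisting by a suitable integer power of $\chi_{\mathrm{cyc}}$, with Frobenius eigenvalues expressible in terms of $\alpha, \beta$ and the local values of $\chi$ at primes above $p$.

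For the injectivity, the long exact sequence attached to $0 \to \wD^+_{\fp, x} \to \wD_{\fp, x} \to \wD^-_{\fp, x} \to 0$ reduces the assertion to $H^0(\cK_\fp, \wD^-_{\fp, x}) = 0$. By the classification of rank-one $(\varphi, \Gamma)$-modules, $H^0(\cR_{\cK_\fp}(\delta))$ is non-zero only for very special algebraic characters $\delta$ (essentially a non-positive integer power of the cyclotomic character on inertia, together with a compatible Frobenius condition); the classical Ramanujan--Deligne bound $|\alpha|_\CC = p^{(a+b+1)/2}$ rules this out for the character attached to $\wD^-_{\fp, x}$. For the equality of $H^1(\wD^+_{\fp, x})$ with the three Bloch--Kato subspaces of $\wM_x$: the positivity of the Hodge--Tate weights of $\wD^+_{\fp, x}$ implies that its image in $H^1(\wM_x)$ lies inside $H^1_{\mathrm{g}}$, and an Euler--Poincar\'e dimension count shows that this containment is an equality. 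The coincidences $H^1_{\mathrm{e}} = H^1_{\mathrm{f}} = H^1_{\mathrm{g}}$ follow from the standard formulas $\dim(H^1_{\mathrm{f}}/H^1_{\mathrm{e}}) = \dim \Dcris(\wM_x)^{\varphi = 1}$ and $\dim(H^1_{\mathrm{g}}/H^1_{\mathrm{f}}) = \dim \Dcris(\wM_x^*(1))^{\varphi = 1}$, both of which vanish by the same Weil bound (the Frobenius eigenvalues on $\wM_x$ all have non-zero Weil weight).

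The main obstacle will be treating uniformly the three cases for the behaviour of $p$ in $\cK$ (split, inert, ramified): in the non-split cases $\cK_\fp \ne \Qp$ and the $(\varphi, \Gamma)$-modules live over $\cR_{\cK_\fp}$, so one must keep careful track of the Frobenius action, possibly after enlarging the coefficient field to split the local Hecke polynomial. However, the Weil bound on the Frobenius eigenvalues of $V_p(f)$ is robust enough to rule out the critical values $1$ and $p^{-1}$ in every case, so the argument goes through uniformly.
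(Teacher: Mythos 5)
Your argument is correct and follows the same overall strategy as the paper's proof: establish that the specialisation $\wD_{\fp,x}$ is ``generic'' via the Weil bound on Frobenius eigenvalues, reduce the injectivity assertion to $H^0(\cK_\fp, \wD_{\fp,x}^-) = 0$, and combine the Hodge--Tate weight computation with genericity to identify the image with the Bloch--Kato subspaces. The route to the last identification differs slightly. You propose a dimension count using Euler--Poincar\'e together with the standard formulas $\dim(H^1_\f/H^1_{\mathrm e}) = \dim\Dcris(\wM_x)^{\varphi=1}$ and $\dim(H^1_{\mathrm g}/H^1_\f) = \dim\Dcris(\wM_x^*(1))^{\varphi=1}$. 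The paper instead invokes Nakamura's extension of the Bloch--Kato exponential to $(\varphi,\Gamma)$-modules over the Robba ring: for a generic $(\varphi,\Gamma)$-module with all Hodge--Tate weights $\ge 1$ (resp.\ $\le 0$) the exponential is an isomorphism (resp.\ is zero), and the identification of the image of $H^1(\cK_\fp,\wD_{\fp,x}^+)$ with $H^1_{\mathrm e}(\cK_\fp,\wM_x)$ then falls out of the long exact sequence. Your version is more hands-on and self-contained; the paper's is shorter once Nakamura's machinery is cited. Both are fine.

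Two points to tighten. First, the claim that ``positivity of the Hodge--Tate weights of $\wD^+_{\fp,x}$ implies its image in $H^1(\wM_x)$ lies inside $H^1_{\mathrm g}$'' is not a consequence of Hodge--Tate weight positivity alone: for instance $\QQ_p(1)$ has Hodge--Tate weight $1$, yet $H^1_{\mathrm g}(\QQ_p,\QQ_p(1))$ has dimension $1$ while $H^1(\QQ_p,\QQ_p(1))$ has dimension $2$. What is needed is the genericity of $\wD^+_{\fp,x}$ (in particular $\Dcris\bigl((\wD^+_{\fp,x})^*(1)\bigr)^{\varphi=1}=0$), which you do establish in the injectivity paragraph via the Weil bound; you should invoke it explicitly at this point rather than attributing the containment to the weights alone. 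Second, the Weil-bound argument as written only applies when $\chi$ is crystalline at $\fp$. When $\chi$ is ramified at $\fp$ the module $\wM_x|_{G_{\cK_\fp}}$ is not crystalline and the relevant $\Dcris$ groups vanish for elementary reasons, so genericity is immediate; but this case should be flagged separately, as the paper does (``if $\chi$ is non-crystalline at $p$, this is obvious'').
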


  \begin{proof}
   Let us say that a de Rham $(\varphi, \Gamma)$-module $D$ over the Robba ring is ``generic'' if every subquotient $D'$ of $D$ satisfies $\Dcris(D)^{\varphi = 1} = \Dcris(D^*(1))^{\varphi = 1} = 0$. We claim that for a classical point $x$, the module $D = \wD_{p, x}$ is generic. If $\chi$ is non-crystalline at $p$, this is obvious; if $\chi$ is crystalline, then $D$ is crystalline and all eigenvalues of $\varphi$ on $\Dcris(D)$ and on $\Dcris(D^*(1))$ have complex absolute value $p^{-1/2}$, so none can be 1.

   The genericity of $D$ implies that $H^1_{\mathrm{e}}$, $H^1_{\f}$ and $H^1_{\mathrm{g}}$ coincide. Moreover, it also implies that $H^0(\cK_{\fp}, \wD_{\fp, x}^-) = 0$, since this module is a submodule of $\Dcris(\wD_{\fp, x}^-)^{\varphi = 1}$.

   The functor $\DD_{\mathrm{dR}}$, and the Bloch--Kato exponential, has been extended to $(\varphi, \Gamma)$-modules by Nakamura \cite{nakamura}; and if $D$ is generic and has all Hodge--Tate weights $\ge 1$ (resp.~$\le 0$) then the exponential map for $D$ is an isomorphism (resp.~is zero). Since $\wD_{\fp, x}^+$ has Hodge--Tate weights $\ge 1$, and $\wD_{\fp, x}^-$ has weights $\le 0$, the result follows.
  \end{proof}

  \begin{rmk}
   Note that the above reasoning would break down at a point corresponding to a classical newform of level $pN$ (although a closer study of the argument shows that the problem only arises if $a = b$ and $\chi$ is unramified at $\fp$). This is related to the phenomenon of \emph{exceptional zeros} of $p$-adic $L$-functions. However, our (rather strict) definition of ``classical point'' rules these out.
  \end{rmk}

  \begin{theorem}
   \label{thm:selmer2}
   The map $H^1(\cK_{\fp}, \wD_{\fp}^+) \to H^1(\cK_{\fp}, \wD_{\fp})$ induced by \(\wD_{\fp}^+ \into \wD_{\fp}\) is an injection, and $\operatorname{loc}_{\fp}(z_{\cF})$ is in the image of this map.
  \end{theorem}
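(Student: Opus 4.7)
The plan is to study the long exact cohomology sequence coming from the short exact sequence of $(\varphi,\Gamma)$-modules
\[ 0 \to \wD_\fp^+ \to \wD_\fp \to \wD_\fp^- \to 0, \]
using the explicit description of the triangulation quotient $\wD_\fp^-$ supplied by Theorem \ref{thm:liu} after the further twist by $\kappa_{\tilde V}$. The injectivity claim reduces to showing $H^0(\cK_\fp, \wD_\fp^-) = 0$, while the image claim reduces to showing that the image of $\loc_\fp(z_\cF)$ in $H^1(\cK_\fp, \wD_\fp^-)$ vanishes.

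First I would establish that both $H^0(\cK_\fp, \wD_\fp^-)$ and $H^2(\cK_\fp, \wD_\fp^-)$ vanish globally over $\tilde V$. At any classical crystalline point $x=(f,\alpha,\chi)$, the analysis already carried out inside the proof of Proposition \ref{prop:bkselmer} (that $\varphi$ acts on the fibre of $\wD_\fp^-$ by a Weil number different from $1$) yields $H^0(\wD_{\fp,x}^-)=0$, and a dual argument via local Tate duality for $(\varphi,\Gamma)$-modules yields $H^2(\wD_{\fp,x}^-)=0$. Since $H^0(\cK_\fp, \wD_\fp^-)$ is a torsion-free coherent sheaf on $\tilde V$ (as a subsheaf of the locally free sheaf $\wD_\fp^-$) and classical crystalline points are Zariski-dense in $\tilde V$, pointwise vanishing forces global vanishing; $H^2$ is handled analogously after Tate duality. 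By the Kedlaya--Pottharst--Xiao theory of cohomology of families of $(\varphi,\Gamma)$-modules, it then follows that $H^1(\cK_\fp, \wD_\fp^-)$ is a locally free sheaf on $\tilde V$ whose formation commutes with arbitrary base change.

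For the image assertion, let $c$ denote the image of $\loc_\fp(z_\cF)$ in $H^1(\cK_\fp, \wD_\fp^-)$. By the base-change property just established, it suffices to verify that $c_x = 0$ at a Zariski-dense subset of points of $\tilde V$. Take $x=(f,\alpha,\chi)$ a classical crystalline point. Theorem A identifies $(\loc_\fp z_\cF)_x$ with a nonzero scalar multiple of $\loc_\fp(z_\et^{[f,\chi]})$, and this latter class is the \'etale Abel--Jacobi image of a generalised Heegner cycle living on a Kuga--Sato variety over $F_m$ with good reduction at the primes of $F_m$ above $p$ (since $p \nmid N$). By Nekov\'a\v{r}'s theorem on the crystallinity of such Abel--Jacobi images, $\loc_\fp(z_\et^{[f,\chi]})$ lies in $H^1_\f(\cK_\fp, V_p(f)^* \otimes \chi)$; Proposition \ref{prop:bkselmer} identifies this Bloch--Kato subspace with the image of $H^1(\cK_\fp, \wD_{\fp,x}^+) \to H^1(\cK_\fp, \wM_x)$, and hence $c_x = 0$ as required.

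The main obstacle will be proving the global vanishing of $H^2(\cK_\fp, \wD_\fp^-)$, and hence the locally-free / base-change property of $H^1(\cK_\fp, \wD_\fp^-)$, uniformly across $\tilde V$; this requires careful tracking of the universal character $\kappa_{\tilde V}$ through the triangulation in each of the three local behaviours of $p$ in $\cK$ (split, inert, ramified), as the shape of $\wD_\fp^-$ and the computation of its Hodge--Tate weights differ in each case. A related subtlety is ensuring that classical crystalline points $(f,\alpha,\chi)$ with $\infty$-type $(a,b)$, $a,b \ge 0$, are genuinely Zariski-dense in the 2-dimensional rigid space $\tilde V$; this follows from density of noble classical points in $\cE(N)$ combined with density of algebraic Gr\"ossencharacters of suitable $\infty$-type and tame conductor in the anticyclotomic fibre direction.
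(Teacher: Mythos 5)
Your overall strategy -- take the long exact sequence for $0 \to \wD_\fp^+ \to \wD_\fp \to \wD_\fp^- \to 0$, reduce injectivity to $H^0(\cK_\fp, \wD_\fp^-) = 0$, and reduce the image assertion to vanishing of $\loc_\fp(z_\cF)$ in $H^1(\cK_\fp,\wD_\fp^-)$, proved by specialisation at Zariski-dense crystalline classical points using Proposition~\ref{prop:bkselmer} -- is exactly the paper's route (citing \cite[Theorem 7.1.2]{loeffler-zerbes:coleman}). You correctly flag that the worry lies in the claimed global vanishing of $H^2(\cK_\fp,\wD_\fp^-)$; and indeed that step, as you have set it up, does not work. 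Local duality in the KPX formalism gives a quasi-isomorphism $R\Gamma(\wD_\fp^-) \cong R\Hom_{\cO(\tilde V)}(R\Gamma((\wD_\fp^-)^*(1)),\cO(\tilde V))[-2]$, and chasing the hyperext spectral sequence shows that vanishing of $H^0((\wD_\fp^-)^*(1))$ only yields $\Hom_{\cO(\tilde V)}(H^2(\wD_\fp^-),\cO(\tilde V))=0$, i.e.\ that $H^2$ is torsion, \emph{not} that it vanishes. Likewise $H^2$ is a cokernel and there is no reason for it to be a subsheaf of a torsion-free sheaf, so the ``analogous'' pointwise-vanishing-plus-torsion-freeness argument you use for $H^0$ does not transfer. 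A torsion coherent sheaf vanishing at a Zariski-dense set of classical points can perfectly well be supported on a proper closed subvariety, so global vanishing of $H^2$ remains unproved.

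The good news is that the detour is unnecessary. You do not need the full base-change isomorphism $H^1(\wD_\fp^-) \otimes k(x) \cong H^1(\wD_{\fp,x}^-)$: the base-change spectral sequence always gives an \emph{injection} $H^1(\wD_\fp^-)\otimes k(x) \hookrightarrow H^1(\wD_{\fp,x}^-)$, with no hypothesis on $H^2$ whatsoever. Moreover, once $H^0(\wD_\fp^-)=0$ is established, the perfect complex $R\Gamma(\wD_\fp^-)$ is concentrated in degrees $[1,2]$, so it may be represented by a complex of finite projective $\cO(\tilde V)$-modules in degrees $1$ and $2$, and $H^1(\wD_\fp^-)$ is then a submodule of a projective module, hence torsion-free. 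Torsion-freeness of $H^1$ plus the injectivity of the specialisation maps is exactly what is needed to pass from vanishing at crystalline classical points to vanishing of $\loc_\fp(z_\cF)$ in $H^1(\wD_\fp^-)$. As a further simplification: your derivation of $H^0(\wD_\fp^-)=0$ from pointwise vanishing works, but the paper gets it more cheaply by observing that $\wD_\fp^-$ is a \emph{non-constant} family of rank-$1$ $(\varphi,\Gamma)$-modules (owing to the twist by the universal character $\kappa_{\tilde V}$), from which $H^0$ of the family vanishes directly. With these two adjustments, the concern you raise at the end about tracking $\kappa_{\tilde V}$ through the three splitting behaviours of $p$ simply evaporates: no case analysis is required, since the argument never needs any global statement about $H^2$.
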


  \begin{proof}
   As in \cite[Theorem 7.1.2]{loeffler-zerbes:coleman}, the fact that $\wD_{\fp}^-$ is a non-constant family of rank 1 modules implies that $H^0(\cK_{\fp}, \wD_{\fp}^-) = 0$. This gives the injectivity, and also implies that $H^1(\cK_{\fp}, \wD_{\fp}^-)$ is torsion-free. So it suffices to show that there is a Zariski-dense set of points $x \in \tilde{V}$ such that $z_{\cF}$ maps to 0 in $H^1(\cK_{\fp}, \wD_{\fp, x}^-)$.

   The specialisations of $\loc_\fp z_{\cF}$ at crystalline points $x$ of weight $(k-j, j)$, with $0 \le j \le k$, are in the image of the \'etale cycle class map; hence they lie in the Bloch--Kato $H^1_{\f}$ subspace (see e.g. \cite[\S 3.4]{BDP}). By the preceding proposition, they must map to 0 in $H^1(K_\fp, \wD_{\fp, x}^-)$. So $\loc_\fp z_{\cF}$ must vanish at all crystalline points satisfying this inequality, and these are clearly Zariski-dense.
  \end{proof}

 \subsection{The Selmer complex}

  We use here the formalism of Selmer complexes developed in \cite{pottharst13}. To apply Pottharst's theory to $\wM$, we need to make a choice of \emph{local conditions} $\Delta = (\Delta_v)_{v \in \Sigma}$; here $\Delta_v$ is the data of a perfect complex $\cO(\tilde V)$-modules $U_v^+$, and a map of complexes $\iota_v: U_v^+ \to R\Gamma(\cK_v, \wM)$. We write $U_v^-$ for the mapping cone of $\iota_v$.

  \begin{definition}
   Let $\Delta$ denote the following collection of local conditions:
   \begin{itemize}
    \item for $v \nmid p$, we choose $\Delta_v$ to be the unramified local condition $R\Gamma(\cK_v^{\mathrm{nr}}/\cK_v, \wM^{I_v})$;
    \item for $v \mid p$, we choose $\Delta_v$ to be the ``trianguline'' local condition
    \[ R\Gamma(\cK_v, \wD_v^+) \to R\Gamma(\cK_v, \wD_v).\]
   \end{itemize}
   We write $\widetilde{R\Gamma}(\cK, \wM; \Delta)$ for the corresponding Selmer complex.
  \end{definition}

  By construction there is an exact triangle
  \[ \widetilde{R\Gamma}(\cK, \wM; \Delta) \to R\Gamma(\OK[1/\Sigma], \wM) \to \bigoplus_{v \in \Sigma} U_v^- \to [+1]. \]
  It is clear that $H^0(\OK[1/\Sigma], \wM) = 0$ and that $H^0(U_v^-) = 0$ for $v\nmid p$; and we have seen above this is also true for $v \mid p$. Thus $H^0$ of the Selmer complex is zero, and its $H^1$ is given by
  \[ \widetilde{H}^1(\cK, \wM; \Delta) = \ker\left[ H^1\left(\OK[1/\Sigma], M_{\tilde V}(\cF)^* \right) \to \bigoplus_{v \in \Sigma} H^1(U_v^-)\right].
  \]
  Thus Proposition \ref{prop:selmer1} and Theorem \ref{thm:selmer2} can be summarized as stating that we have $z_{\cF} \in \widetilde{H}^1(\cK, \wM; \Delta)$.

  We finish this section by explaining the sense in which the Selmer complex ``interpolates'' the Bloch--Kato Selmer groups.

  \begin{lemma}
   Let $v \nmid p$ be a prime in $\Sigma$. Then $\wM^{I_v}$ is a direct summand of $\wM$ as a $\cO(\tilde V)$-module, and if $x$ is a crystalline point, we have $( \wM^{I_v})_x = (\wM_x)^{I_v}$.
  \end{lemma}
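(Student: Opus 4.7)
The plan is to split according to whether $v$ divides the tame level $N$.

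For $v \nmid pN$, the argument is immediate. All modular curves and Kuga--Sato varieties used to construct $\cM$ admit smooth models over $\OK[1/pN]$, so $\cM$ is unramified at $v$. Moreover, characters in $\cW_\cK(\fN,\veps)$ are required to restrict to $\veps^{-1}$ on $(\widehat{\cO}_\cK^{(p)})^\times$, and $\veps$ has conductor dividing $\fN$, so $\kappa_{\tilde V}$ is also unramified at $v$. Hence $\wM^{I_v}=\wM$, and both assertions are trivial.

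For $v\mid N$ (so $v\nmid p$), the plan is to establish that the action of $I_v$ on $\wM$ factors through a finite quotient, and then use the averaging idempotent. The restriction of $\kappa_{\tilde V}$ to $I_v$ is the finite-order character $\veps^{-1}$, so the crucial claim concerns $\cM$. Here the plan is as follows: by local--global compatibility for classical modular forms and the fact that the tame level $N$ is fixed across the Coleman family, at every classical point of $V$ the restriction of $V_p(f)^*$ to $I_v$ factors through a finite quotient of bounded order $M$ (depending only on $N$ and $\veps$). For each fixed $\gamma\in I_v$, the element $\rho(\gamma)\in\GL_2(\cO(V))$ satisfies $\rho(\gamma)^M=1$ at every classical point; by Zariski-density of classical points and reducedness of $\cO(V)$, the identity $\rho(\gamma)^M=1$ holds globally. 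Combined with continuity of $\rho$ and a standard argument (bounded exponent pro-$\ell$ linear groups are finite, and the tame quotient is pro-cyclic), one concludes that the image of $I_v$ in $\operatorname{Aut}_{\cO(V)}(\cM)$ is a finite group $G$.

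Given this finite-image claim, let $e = \tfrac{1}{|G|}\sum_{g\in G} g \in \operatorname{End}_{\cO(\tilde V)}(\wM)$. This is a well-defined idempotent (since $|G|$ is invertible in our $p$-adic coefficient ring), and $\wM^{I_v}=\operatorname{image}(e)$, which is therefore a direct summand of $\wM$ as an $\cO(\tilde V)$-module. For a crystalline point $x$, the specialisation $\cO(\tilde V)\to L$ commutes with $e$, so
\[ (\wM^{I_v})_x = e\cdot \wM_x = (\wM_x)^G = (\wM_x)^{I_v}, \]
where the last equality uses that the image of $I_v$ in $\operatorname{Aut}_L(\wM_x)$ is also contained in $G$.

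The main obstacle is the finite-image claim for $\cM$ at $v\mid N$: passing from a bound on the exponent at classical fibres to a genuine finiteness statement for the family requires combining density of classical points with continuity and a structural argument for the image. The remaining steps (the idempotent construction and the base-change compatibility) are then formal.
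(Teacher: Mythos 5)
Your proof has a genuine gap at the crucial finite-image claim. You assert that "at every classical point of $V$ the restriction of $V_p(f)^*$ to $I_v$ factors through a finite quotient of bounded order $M$", and from this you deduce that the image of $I_v$ in $\operatorname{Aut}_{\cO(V)}(\cM)$ is finite. This is false in general. The space $\cE(N,\veps)$ parametrises $N$-\emph{new} eigenforms, and if $\ell \,\|\, N$ with $\veps$ unramified at $\ell$, the local representation at $\ell$ is (a twist of) Steinberg: by Grothendieck's $\ell$-adic monodromy theorem, a finite-index subgroup of $I_\ell$ acts via $\gamma \mapsto \exp\bigl(t_\ell(\gamma) N_\ell\bigr)$ with $N_\ell \ne 0$, so the image of $I_v$ is an \emph{infinite} pro-cyclic unipotent group. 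The idempotent $\tfrac{1}{|G|}\sum_{g\in G} g$ on which your whole argument rests therefore does not exist. (Your conflation is essentially the familiar one between "bounded conductor" and "finite inertial image"; the Steinberg representation has conductor $\ell$ but infinite inertial image.)

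The paper's proof takes a fundamentally different route precisely to deal with this nontrivial monodromy. Working over $\cO(V)$, which is a Dedekind domain (a 1-dimensional affinoid disc), it observes that $\cM/\cM^{I_\ell}$ is automatically torsion-free, hence projective, hence the inclusion $\cM^{I_\ell}\hookrightarrow\cM$ splits — no idempotent is needed. For the specialisation compatibility, it passes to the Weil--Deligne representations $WD_\ell(\cM_x)$: the restriction to inertia is locally constant in the family, so the only way $(\cM_k)^{I_\ell}$ could be strictly larger than $(\cM^{I_\ell})_k$ is if the monodromy operator $N_\ell$ is generically nonzero but degenerates to zero at $k$. But that would force the specialisation at $k$ to be $\ell$-old, contradicting the fact that $\cF$ lives on the $N$-new component of a smooth eigencurve. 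Finally, the case $v \mid \fN$ is reduced to $v\nmid\fN$ by twisting $\cF$ by $\veps^{-1}$, whereas your treatment of $\kappa_{\tilde V}|_{I_v}$ as simply $\veps^{-1}$ is fine but moot given the gap on the $\cM$-side. (Your separate case $v\nmid pN$ is vacuous: since $\Sigma$ consists of primes dividing $pN$, every $v\in\Sigma$ with $v\nmid p$ divides $N$.)
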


  \begin{proof}
   By assumption, $v$ is a degree 1 prime, and exactly one of $v$ and $\bar{v}$ divides $\fN$. We treat first the case $v \nmid \fN$.

   If $v \nmid \fN$, then the universal character $\kappa_{\tilde{V}}$ is unramified at $v$. So the $I_v$-invariants of $\wM = \cM \otimes_{\cO(V)} \cO(\tilde{V})(\kappa_V)$ are given by $\cM^{I_\ell} \otimes_{\cO(V)} \cO(\tilde V)$. Thus it suffices to show that $\cM^{I_\ell}$ is a direct summand of $\cM$ over $\cO(V)$, where $\ell$ is the rational prime below $v$, and we have $(\cM_k)^{I_\ell} = (\cM^{I_\ell})_k$ for all $k \in V \cap \ZZ_{\ge 0}$.

   Since $V$ is a 1-dimensional disc, $\cO(V)$ is a Dedekind domain. Thus  to check that $\cM^{I_\ell}$ is a direct summand, it suffices to show that $\cM / \cM^{I_\ell}$ is torsion-free. However, this is obvious: if $g \in \cO(V)$ is non-zero and $m \in \cM$ is such that $g \cdot m$ is $I_\ell$-invariant, then $g \cdot (im - m) = 0$ for all $i \in I_\ell$, and since $g \ne 0$ and $\cM$ is torsion-free, it follows that $im - m = 0$.

   Now let $k \in V \cap \ZZ_{\ge 0}$. For each $x \in V(\Qb_p)$ we have a Weil--Deligne representation $WD_\ell(\cM_x)$ associated to $\cM_x|_{G_{\QQ_\ell}}$, and the restriction of $WD_\ell(\cM_x)$ to the inertia subgroup of the Weil group is constant over $V$. So if  $(\cM_k)^{I_\ell}$ is strictly larger than $(\cM^{I_\ell})_k$, then the only possibility is that the action of inertia on the Weil--Deligne is trivial, but the monodromy operator $N$ is generically non-trivial but degenerates to 0 at $k$. In this case, the specialisation of $\cF$ at $k$ must be old, arising from a newform of level $N/\ell$; so the $N$-new component of the eigencurve on which $\cF$ lies must intersect with an old component at this point, which contradicts the smoothness of the eigencurve at non-critical classical points.

   We now briefly treat the case $v \mid \fN$. Let $\cF'$ denote the twisted Coleman family $\cF \otimes \veps^{-1}$. Then $\wM \cong M_V(\cF')^* \otimes_{\cO(V)} \cO_{\tilde{V}}(\kappa_{\tilde V} \cdot \veps^{-1})$, and $\kappa_{\tilde V} \cdot \veps^{-1}$ is unramified outside the primes dividing $\bar{\fN}$. So we may apply the same analysis to $\cF'$ instead of $\cF$ to obtain the result.
  \end{proof}

  \begin{corollary}
   Let $x = (f, \alpha, \chi)$ be a classical point of $\tilde V$. Then:
   \begin{enumerate}[(i)]
    \item We have
    \[
     \widetilde{R\Gamma}(\cK, \wM; \Delta) \otimes^{\mathbf{L}}_{\cO(\tilde V), x} \Qb_p \cong \widetilde{R\Gamma}(\cK, \wM_x; \Delta_x),
    \]
    where $\Delta_x$ denotes the local condition on $\wM_x = V_p(f)^*(\chi)$ defined by the unramified local conditions outside $p$ and the trianguline submodules $\wD_{v, x}$ at $v \mid p$.
    \item If $\chi$ has $\infty$-type $(a, b)$ with $a, b \ge 0$, then $\widetilde{H}^1(\cK, \wM_x; \Delta_x)$ is the Bloch--Kato Selmer group $H^1_{\f}(\cK, V_p(f)^*(\chi))$.
   \end{enumerate}
  \end{corollary}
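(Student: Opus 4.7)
The plan is to prove (i) by checking base-change compatibility for each term in the exact triangle defining the Selmer complex, and then to deduce (ii) by matching the specialized local conditions with the Bloch--Kato subspaces prime-by-prime.

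For (i), I would start from the defining triangle
\[ \widetilde{R\Gamma}(\cK, \wM; \Delta) \to R\Gamma(\OK[1/\Sigma], \wM) \to \bigoplus_{v \in \Sigma} U_v^- \xrightarrow{+1} \]
and apply $(-) \otimes^{\mathbf{L}}_{\cO(\tilde V), x} \Qb_p$. It suffices to verify compatible base-change for the middle and right-hand terms. For the global term, I would invoke Pottharst's base-change results (from the same paper where the Selmer-complex formalism is developed) for continuous cohomology of families of Galois representations over affinoids. For $U_v^-$ with $v \nmid p$, the preceding lemma guarantees that $\wM^{I_v}$ is a direct summand of $\wM$ over $\cO(\tilde V)$ whose specialization at a classical point is $(\wM_x)^{I_v}$; since $R\Gamma(\cK_v^{\mathrm{nr}}/\cK_v, \wM^{I_v})$ is computed by a two-term complex $[\wM^{I_v} \xrightarrow{\mathrm{Frob}-1} \wM^{I_v}]$, base change is immediate. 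For $v \mid p$, I would use the descent spectral sequence of KPX for cohomology of families of $(\varphi, \Gamma)$-modules, applied to $\wD_v^+$, together with the observation that the formation of the triangulation given by Theorem \ref{thm:liu} is functorial in $V$, so that the specialization of $\wD_v^+$ at $x$ is $\wD_{v,x}^+$. Assembling the three base-change statements on the terms of the triangle yields (i).

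For (ii), the vanishing of $H^0$ of the Selmer complex (already noted, and stable under specialization by the same arguments as in Theorem \ref{thm:selmer2}) lets us identify $\widetilde{H}^1(\cK, \wM_x; \Delta_x)$ with the kernel of the map
\[ H^1(\OK[1/\Sigma], \wM_x) \to \bigoplus_{v \in \Sigma} H^1(\cK_v, \wM_x) / \operatorname{im} H^1(U_{v,x}^+). \]
I match the local conditions prime by prime. At $v \in \Sigma$ with $v \nmid p$, the image of $H^1(U_{v,x}^+)$ is precisely the unramified subspace, which coincides with $H^1_{\f}(\cK_v, \wM_x)$ for the residue characteristic $\ell \neq p$. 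At $v \mid p$, Proposition \ref{prop:bkselmer} applies (using that the $\infty$-type $(a,b)$ satisfies $a, b \ge 0$, so the classicality assumption there is met) and identifies the image of $H^1(\cK_v, \wD_{v,x}^+) \to H^1(\cK_v, \wM_x)$ with $H^1_{\f}(\cK_v, \wM_x)$. Together with our convention that $H^1(\cK, -)$ denotes $H^1(\OK[1/\Sigma], -)$, this identifies $\widetilde{H}^1(\cK, \wM_x; \Delta_x)$ with the Bloch--Kato Selmer group $H^1_{\f}(\cK, V_p(f)^*(\chi))$.

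The main obstacle is in (i), specifically guaranteeing that the derived tensor product with the residue field at $x$ computes on the level of cohomology the expected specialization. This comes down to coherence/Tor-vanishing of the cohomology sheaves of the complexes $R\Gamma(\OK[1/\Sigma], \wM)$ and $R\Gamma(\cK_v, \wD_v^{\pm})$ near $x$; one expects these to be perfect complexes over the affinoid $\cO(\tilde V)$, so that base change holds on the nose, but verifying this cleanly requires combining Pottharst's framework with the KPX finiteness theorem and the observation that $\tilde V$ is smooth in a neighbourhood of noble classical points. The prime-by-prime matching in (ii) is then essentially formal.
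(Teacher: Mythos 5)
Your proof is correct and follows essentially the same approach as the paper: check compatibility of each local condition with specialization (using the preceding lemma for $v \nmid p$, and the fact that $\wD_{v,x}^+$ is by construction the specialization of $\wD_v^+$ for $v \mid p$), then invoke Pottharst's base-change machinery for Selmer complexes, and finally match local conditions with Bloch--Kato conditions prime-by-prime via Proposition \ref{prop:bkselmer}. The paper cites Pottharst's Theorem 1.12 to handle the global base change in one stroke rather than working term-by-term through the triangle, but the content is the same.
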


  \begin{proof}
   The preceding lemma implies that the formation of the unramified local conditions is compatible with specialisation at $x$. This is true by definition for the trianguline conditions at $p$. So we obtain the compatibility of the Selmer complexes with specialisation from \cite[Theorem 1.12]{pottharst13}, proving (i).

   For (ii), we must check that the local conditions $\Delta_{x, v}$ agree with the Bloch--Kato local conditions at all primes $v$ of $\cK$. Away from $p$ this is true by definition; for $v \mid p$ it is part of Proposition \ref{prop:bkselmer}.
  \end{proof}

  \begin{rmk}
   For a single modular form $f$ (rather than a family), assumed to be ordinary at $p$, Perrin-Riou has formulated an ``anticyclotomic Iwasawa main conjecture without $p$-adic $L$-functions'' \cite[Conjecture B]{perrinriou87}; this conjecture been proved, under some mild technical hypotheses, in \cite{BCK}. In the language of Selmer complexes, this conjecture relates the index of a Heegner class in $H^1$ of the anticyclotomic Selmer complex to the torsion submodule of $H^2$ of the same complex. It would be interesting to formulate a generalisation of Perrin-Riou's conjecture in the present setting; it may even be possible to prove some results in this direction using the ``Euler system machine'' to bound Selmer groups, analogous to \cite{arizonafolks} in the Rankin--Selberg case. We shall not pursue this matter in the present paper, but it would be a very interesting direction for further work.
  \end{rmk}


\section{P-adic \tp{L}-functions for split \tp{p}}

 We now assume that $p$ is split in $\cK$; and we write $(p) = \fp \fpb$, where $\fp$ is the prime corresponding to our embedding $\cK \subset \Qb \into \Qb_p$. In order to make use of the local computations of \cite{BDP}, we shall also suppose, as in \emph{op.cit.}, that $\cK$ has odd discriminant.
%

 \subsection{CM periods}
  \label{sect:CMperiods}

  Recall the pair $(A, t)$ chosen above, with $A$ isomorphic to $\CC / \OK$ as an elliptic curve over $\CC$. Let $\Qpnh \subset \CC_p$ denote the completion of the maximal unramified extension of $\Qp$, and $\Zpnh \subset \cO_{\CC_p}$ denote its ring of integers (equivalently, we can define $\Zpnh$ as the ring of Witt vectors of $\overline{\mathbf{F}}_p$).

  \begin{definition}
   Let $\omega_A$ be a choice of basis of the 1-dimensional $F$-vector space $\Omega^1(A / F)$. We define the following two periods:
   \begin{itemize}
    \item Let $\Omega_\infty \in \CC$ be such that
    \[ \omega_A = \Omega_\infty \cdot 2\pi i\, \mathrm{d}w \]
    as bases of $\Omega^1(A / \CC)$, where $w$ is the standard complex coordinate on $A(\CC) \cong \CC /\OK$ (cf.~\cite[Eq. (5.1.16)]{BDP}).
    \item Let $\Omega_{\fp} \in \Zpnh$ be such that
    \[ \omega_A = \Omega_{\fp} \cdot \omega_{\mathrm{can}}, \]
    as bases of $\Omega^1(A[\fp^\infty] / \Zpnh)$, where $\omega_{\mathrm{can}}$ is the pullback to $A$ of the differential $\frac{\mathrm{d}u}{u}$ on $\GG_m$, via an isomorphism of $p$-divisible groups $A[\fp^\infty] \cong \mu_{p^\infty}$ over $\Zpnh$ (cf.~\cite[Eq. (5.2.2)]{BDP}).
   \end{itemize}
  \end{definition}

  In the language of $p$-adic Hodge theory, $\omega_A$ is a basis of $\Dcris(\sigma|_{G_{F_{\fP}}}) = \left( \mathbf{B}_{\cris}\right)^{G_{F_{\fP}} = \sigma^{-1}}$, and we have $\omega = \Omega_{\fp} \cdot t$, where $t$ is the period for the cyclotomic character. In particular, this shows that $G_{F_{\fP}}$ acts on $\Omega_{\fp}$ via the character $\sigma^{-1}\cdot \chi_{\mathrm{cyc}}^{-1} = \bar\sigma$.

  We let $\eta_A$ be the basis of $H^1_{\mathrm{dR}}(A / F_{\fP}) / \mathrm{Fil}^1$ which pairs to 1 with $\omega$. As an element of $\mathbf{B}_{\cris}$ this is simply $1/\Omega_{\fp}$.

 \subsection{The BDP \tp{p}-adic \tp{L}-function of a cusp form} We now recall the two main results of \cite{BDP}: firstly, the construction of a 1-variable $p$-adic $L$-function interpolating square roots of central $L$-values $L(f, \chi^{-1}, 1)$, for a fixed $f$ and varying $\chi$; secondly, its relation to the Heegner classes for $f$.

  Let $f$ be a newform of level $N$, character $\veps$ and weight $k + 2 \ge 2$. Write $\cW_{\cK}(\fN,k,\veps)$ for the fibre of $\cW_{\cK}(\fN, \veps)$ above $k \in \cW$, parametrising characters $\chi$ of $\Gal(\cK^{\ab} /\cK)$ unramified outside $\fN p^\infty$ such that $V_p(f)^*\otimes \chi$ is conjugate self-dual. We divide the set of classical points $\chi$ of $\cW_{\cK}(\fN, k, \veps)$ into three subsets $\Sigma^{(1)} \sqcup \Sigma^{(2)} \sqcup \Sigma^{(2')}$, as in Definition 4.1 of  \cite{BDP}, according to the $\infty$-type $(a,b)$:
  \begin{itemize}
   \item $\chi \in \Sigma^{(1)}$ if $0 \le a,b \le k$,
   \item $\chi \in \Sigma^{(2)}$ if $a \ge k+1$ and $b \le -1$,
   \item $\chi \in \Sigma^{(2')}$ if $a \le -1$ and $b \ge k+1$.
  \end{itemize}
  The $\chi \in \Sigma^{(1)}$ are precisely those for which $(f, \chi)$ is a Heegner pair. On the other hand, if $\chi \in \Sigma^{(2)}$ then the sign in the functional equation of $L(f, \chi^{-1}, s)$ is $+1$, so the central value $L(f, \chi^{-1}, 1)$ is not forced to vanish.

  \begin{notation}
   For $\chi \in \Sigma^{(2)}$ of $\infty$-type $(a, b)$, let us write
   \[ L^{\mathrm{alg}}(f, \chi^{-1}, 1) = \frac{C(f, \chi^{-1}, 1)}{w(f, \chi) \cdot \Omega_\infty^{2(a-b)}} L(f, \chi^{-1}, 1) \in \Qb, \]
   as in \S 5.1 of \cite{BDP}, where $C(f, \chi^{-1}, 1)$ and $w(f, \chi)$ are appropriate modifying factors as in \emph{op.cit.}.
  \end{notation}

  \begin{theorem}[{\cite[Theorem 5.5]{BDP}}]
   There exists a bounded rigid-analytic function $\BDP(f)$ on $\cW_{\cK}(\fN, k, \veps)$, whose value at a crystalline character $\chi \in \Sigma^{(2)}$ of $\infty$-type $(a, b)$ is given by
   \[
    \frac{\BDP(f)(\chi)}{\Omega_{\fp}^{(a-b)}}
    = (1 - \tfrac{\chi(\fp)}{\alpha}) (1 - \tfrac{\chi(\fp)}{\beta})
    \left[ L^{\mathrm{alg}}(f, \chi^{-1}, 1) \right]^{1/2}.\qedhere\qed
   \]
  \end{theorem}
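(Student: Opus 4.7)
The plan is to follow the construction of \cite[\S 5]{BDP}, realising $\BDP(f)$ as a bounded rigid-analytic function on $\cW_{\cK}(\fN, k, \veps)$ obtained by evaluating a family of bounded $p$-adic modular forms at the CM point $(A, t)$ and its Galois translates. The first step is to replace $f$ by its \emph{$p$-depletion} $f^{[p]} = \sum_{(n,p) = 1} a_n(f)\, q^n$, which is a bounded $p$-adic modular form of tame level $N$. The $p$-depletion is the source of both Euler factors $(1 - \chi(\fp)/\alpha)(1 - \chi(\fp)/\beta)$ in the interpolation formula, since passing from $f$ to $f^{[p]}$ removes exactly the two local factors at $p$.

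Next, I would apply the Atkin--Serre theta operator $\theta = q \tfrac{\mathrm{d}}{\mathrm{d}q}$ repeatedly: for each $j \ge 0$, the form $\theta^j f^{[p]}$ remains a bounded $p$-adic modular form in the sense of Katz (this boundedness is precisely what forces the depletion, as $\theta^j f$ itself is typically unbounded). Katz's theory then permits evaluating $\theta^j f^{[p]}$ on any ordinary Serre--Tate triple $(A', t', \omega')$. Using the canonical differential $\omega_{\mathrm{can}}$ attached to $A[\fp^\infty] \cong \mu_{p^\infty}$, and its translates by $\Gal(\cK^{\ab}/F)$, one defines
\[
\BDP(f)(\chi) \coloneqq \sum_\sigma \chi^{-1}(\sigma)\, (\theta^j f^{[p]})(A^\sigma, t^\sigma, \omega_{\mathrm{can}}^\sigma),
\]
where $\sigma$ runs over an appropriate set of representatives and the integer $j$ is chosen in terms of the $\infty$-type of $\chi$. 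Since the values are uniformly bounded and $\chi \mapsto \chi^{-1}(\sigma)$ varies rigid-analytically, this yields a bounded rigid-analytic function on each component of $\cW_{\cK}(\fN, k, \veps)$.

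The interpolation formula at $\chi \in \Sigma^{(2)}$ is verified by comparing $\theta^j f^{[p]}$ with the classical Maass--Shimura nearly-holomorphic derivative $\delta_k^j f$ at the corresponding upper half-plane points, using the period identities $\omega_A = \Omega_\infty \cdot 2\pi i\, \mathrm{d}w = \Omega_\fp \cdot \omega_{\mathrm{can}}$ and the canonical splitting of the Hodge filtration on $A$ in the ordinary locus. This comparison produces precisely the ratio $\Omega_\fp^{a-b}$ on the $p$-adic side matching $\Omega_\infty^{2(a-b)}$ on the complex side, reflecting the ``square root'' nature of the construction. The archimedean sum is then evaluated by the explicit Rankin--Selberg / Waldspurger integral representation of \cite[\S 4]{BDP}, producing $[L^{\mathrm{alg}}(f, \chi^{-1}, 1)]^{1/2}$ with the correct modifying factors.

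The main obstacle is the delicate period and sign bookkeeping: defining $[L^{\mathrm{alg}}]^{1/2}$ consistently as a function on $\Sigma^{(2)}$, and matching the $p$-adic expression with the complex one at the level of nearly-holomorphic forms, requires careful use of Shimura's reciprocity law at CM points together with the canonical splitting on the ordinary locus. Boundedness in $\chi$, which is what allows $\BDP(f)$ to extend from the classical interpolation points to the whole rigid space $\cW_{\cK}(\fN, k, \veps)$, then reduces to the fact that $\theta^j f^{[p]}$ is a bounded $p$-adic modular form uniformly in $j$, again thanks to the depletion step.
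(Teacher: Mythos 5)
The paper does not prove this statement: it is cited as \cite[Theorem 5.5]{BDP} and used as a black box (note the immediate \verb|\qed|). Your sketch is a faithful summary of the construction carried out in \emph{op.\,cit.}: $p$-depletion $f \mapsto f^\flat$ (which both preserves boundedness under $\theta^j$ and is the source of the two Euler factors $(1 - \chi(\fp)/\alpha)(1-\chi(\fp)/\beta)$), iterated Atkin--Serre $\theta$-operators, evaluation at Galois translates of the Serre--Tate triple $(A, t, \omega_{\mathrm{can}})$, and the period comparison via $\omega_A = \Omega_\infty \cdot 2\pi i\, \mathrm{d}w = \Omega_\fp \cdot \omega_{\mathrm{can}}$ to translate between the $p$-adic and archimedean sides. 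Two small points you elide: the averaging sum carries an extra $\mathbf{N}(\mathfrak{c})^{b}$ twist (compare the paper's later display \eqref{eq:defBDP}), and rigid-analyticity in the $\chi$-direction is not merely a consequence of boundedness plus analyticity of $\chi \mapsto \chi^{-1}(\sigma)$ — one also needs that, on $q$-expansions, $j \mapsto n^{j}$ for $p\nmid n$ extends to a rigid-analytic function of the weight, which is exactly what the $p$-depletion buys. These are standard and your outline is otherwise consistent with the source the paper cites.
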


  \begin{rmk}
   We apologise for some clashes of notation between the present paper and \emph{op.cit.}: as noted above, their $\chi$ is $\chi \cdot \mathbf{N}$ in our notation, where $\mathbf{N}$ is the norm character of $\infty$-type $(1, 1)$, and their $k$ is our $k + 2$. It follows from work of Brakocevic \cite{brakocevic11} that $\BDP(f)$ also enjoys an interpolating property at non-crystalline characters in $\Sigma^{(2)}$ (ramified at the primes above $p$), but we shall not make this explicit here, since the crystalline characters in $\Sigma^{(2)}$ are already dense in $\cW(\fN, k, \veps)$, and so the interpolating property at these points is already sufficient to uniquely determine $\BDP(f)$.
  \end{rmk}

  \begin{theorem}[{\cite[Theorem 5.13]{BDP}}]
   \label{thm:explrecip}
   The value of $\BDP(f)$ at a crystalline character in $\Sigma^{(1)}$, of $\infty$-type $(a,b)$ with $a,b \ge 0$, is given by
   \[
   \frac{\BDP(f)}{\Omega_{\fp}^{(a-b)}} = \frac{1}{b!\binom{a + b}{a}G(\veps^{-1})} \left\langle \log\left(\loc_\fp z_{\et}^{[f, \chi]}\right), \omega_f' \wedge \omega_A^b \eta_A^{a}\right\rangle,
   \]
   where $\omega_A$ and $\eta_A$ are differentials on the CM elliptic curve $A = \CC/ \OK$ as in \S\ref{sect:CMperiods}, and $G(\veps^{-1})$ is the Gauss sum.
  \end{theorem}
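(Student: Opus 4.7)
The plan is to follow the strategy of Bertolini--Darmon--Prasanna, reducing the identity to a comparison of two expressions of $p$-adic modular forms evaluated at the CM point $(A, t)$. The central tool is the syntomic-cohomological description of the $p$-adic Bloch--Kato logarithm, due to Besser and Nekov\'{a}\v{r}, which identifies $\log(\loc_\fp z_{\et}^{[f, \chi]})$ with the syntomic Abel--Jacobi image of the generalised Heegner cycle $\Delta_{\phi}$ on the relevant Kuga--Sato variety, projected to the $h^{(a,b)}(A)$-isotypic summand in the algebraic decomposition of $\TSym^{a+b}(h^1 A)$.

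First, I would compute the right-hand side. Using the formula expressing syntomic regulators as Coleman primitives, together with the relation $\eta_A = \Omega_\fp^{-1} \omega_{\mathrm{can}}$ from \S\ref{sect:CMperiods}, the pairing $\langle \log(\loc_\fp z_{\et}^{[f, \chi]}), \omega_f' \wedge \omega_A^b \eta_A^a \rangle$ should evaluate to $\Omega_\fp^{-a}$ times the CM-value at $(A, t, \omega_{\mathrm{can}})$ of a $p$-adic modular form obtained from $f$ by first passing to its $p$-depletion $f^{[p]}$ (landing in the unit-root subspace, where Coleman integration is well-defined) and then applying a $(1+b)$-fold primitive of the Atkin--Serre theta operator $\theta = q\,\mathrm{d}/\mathrm{d}q$. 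The constants $b!$, $\binom{a+b}{a}$ and $G(\veps^{-1})$ would then enter from, respectively, the $(1+b)$-fold integration, the projection to the $(a,b)$-eigencomponent of $\TSym^{a+b}(h^1 A)$ under the CM action, and the normalisation of $\omega_f'$ at the cusp $\infty$ (cf.~Remark \ref{rmk:gausssum}).

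Next, I would match this with $\BDP(f)(\chi)$. By construction, $\BDP(f)$ is defined by $p$-adic interpolation of CM values across the dense set $\Sigma^{(2)}$, where the formula involves \emph{positive} iterates of the theta operator applied to $f^{[p]}$. Its value at a character in $\Sigma^{(1)}$, extended by rigid-analytic continuation, should read off the corresponding negative power, well-defined on the $p$-depleted subspace where $\theta$ acts invertibly. Combined with the computation of the previous step, and including the remaining period factor $\Omega_\fp^{b}$ from $\omega_A^b$, this would give the stated formula. The main obstacle is the careful bookkeeping of normalisation constants---the Gauss sum, factorials, binomial coefficients and powers of $\Omega_\fp$---together with the nontrivial input of the Besser--Nekov\'{a}\v{r} theorem identifying syntomic Abel--Jacobi images with explicit Coleman integrals, which is the technical heart of the argument and is carried out in detail in \cite{BDP}.
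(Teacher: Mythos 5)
Your proposal re-derives the theorem from first principles (syntomic Abel--Jacobi via Besser--Nekov\'{a}\v{r}, Coleman primitives applied to the $p$-depletion, etc.), which is essentially BDP's own proof of their Theorem 5.13. The paper does not do this: it simply \emph{cites} that theorem and records the two bookkeeping discrepancies between its conventions and BDP's, namely that the binomial $\binom{a+b}{a}$ arises from working with symmetric tensors $\TSym^k(T^\vee)$ rather than symmetric powers $\Sym^k$ in the definition of the Heegner class, and that the Gauss sum $G(\veps^{-1})$ comes from the normalisation of $\omega_f'$, whose $q$-expansion at $\infty$ is $G(\veps^{-1})\, f(q)$ (Remark \ref{rmk:gausssum}). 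You identify the Gauss sum's source correctly. However, you attribute $\binom{a+b}{a}$ to ``projection to the $(a,b)$-eigencomponent of $\TSym^{a+b}(h^1 A)$''; that eigencomponent is a rank-one summand and does not by itself produce a binomial coefficient. The binomial in fact comes from the duality mismatch between $\TSym$ (used in this paper) and $\Sym$ (used by BDP). So while your strategy would, if carried out in full, reproduce the theorem, it is a much longer route: the intended proof is a short convention-translation of a cited result, and the translation step is precisely where your account of the constants is slightly off.
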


  \begin{proof}
   This is Theorem 5.13 of \emph{op.cit.} translated into our conventions. The factor $\binom{a+b}{a}$, which does not appear in \emph{op.cit.}, reflects our use of symmetric tensors $\TSym^k$, rather than the symmetric powers $\Sym^k$, in our definition of the Heegner class. For the factor $G(\veps^{-1})$ see Remark \ref{rmk:gausssum} above.
  \end{proof}

 \subsection{BDP \tp{p}-adic \tp{L}-functions in families}

  Our next result shows that the $p$-adic $L$-function $\BDP(f)$ can be extended to a rigid-analytic function on the 2-dimensional space $\cE_{\cK}(\fN, \veps)$, allowing the form $f$ to vary as well as the character $\chi$. Note that this extends a result for ordinary families due to Castella \cite[Theorem 2.11]{castella:padicvar}.

  Recall that the BDP $p$-adic $L$-function satisfies the following formula, for any crystalline $\chi$ of $\infty$-type $(a,b)$:
  \begin{equation}
   \label{eq:defBDP}
   \BDP(f)(\chi) = \sum_{[\mathfrak{c}] \in \operatorname{Cl}(K)} \chi(\mathfrak{c})^{-1}\mathbf{N}(\mathfrak{c})^{b} \theta^{-(1+b)} f^\flat\left( \mathfrak{c} \star (A, t, \omega_{\mathrm{can}})\right).
  \end{equation}
  Here $f^\flat$ denotes the ``$p$-depletion'' of $f$, i.e.~the $p$-adic modular form with $q$-expansion $\sum_{p \nmid n} a_n(f) q^n$; and $\theta$ is the differential operator $q \tfrac{\mathrm{d}}{\mathrm{d}q}$ on $p$-adic modular forms. For the notation $\mathfrak{a} \star (A, t, \omega_{\mathrm{can}})$, see Equation 1.4.8 of \emph{op.~cit.}; note that if $\mathfrak{c} = (\xi)$ is a principal ideal, with $\xi \in \OK$ coprime to $\fN$, then we have
  \[
   \mathfrak{c} \star (A, t, \omega_{\mathrm{can}}) = (A, \xi \cdot t, \xi^{-1}\omega_{\mathrm{can}}).
  \]

  We now recall the notion of a \emph{family of $p$-adic modular forms} over $X$, where $X$ is a rigid space with a map $\kappa_X: X \to \cW$ (see \cite{coleman97} for further details). The construction of the eigencurve gives rise to a universal family of eigenforms $\cF_{\mathrm{univ}}$ over $\cE(N, \veps)$. We may therefore consider its $p$-depletion $\cF_{\mathrm{univ}}^\flat$, which is a family of infinite-slope eigenforms.

  We want to ``twist'' this by appropriately-chosen characters. We identify $\cO_{K, p}^\times$ with $\Zp^\times \times \Zp^\times$ via the two primes $(\fp, \fpb)$. Mapping $x \in \Zp^\times$ to either $(x^{-1}, 1)$ or $(1, x^{-1})$ defines maps $\Zp^\times \into \cO_{K, p}^\times \subset \AA_{\cK, \f}^\times$, and hence two characters $\Zp^\times \to \cO(\cW_{\cK}(\fN, \veps))^\times$, which we denote by $\mathbf{a}$ and $\mathbf{b}$ (since their specialisations at a crystalline character of $\infty$-type $(a,b)$ are the integers $a$ and $b$).

  \begin{proposition}
   There exists a family of $p$-adic modular forms $\theta^{-(1+\mathbf{b})}\cF_{\mathrm{univ}}^\flat$ over $\cE_{\cK}(\fN, \veps)$, whose specialisation at a crystalline point $(f, \alpha, \chi)$ with $\chi$ of weight $(a,b)$ is $\theta^{-(1+b)}(f^\flat)$.
  \end{proposition}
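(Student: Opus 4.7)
My plan is to apply Serre's theory of $p$-adic modular forms with coefficients in a Banach $\Qp$-algebra, combined with a density argument for characters of $\Zp^\times$. First, pull back the universal Coleman family $\cF_{\mathrm{univ}}$ along the natural morphism $\cE_\cK(\fN, \veps) \to \cE(N, \veps)$ and form its $p$-depletion; this gives a family of ``infinite-slope'' $p$-adic modular forms whose $q$-expansion lies in the closed subspace of $\cO(\cE_\cK(\fN, \veps))[[q]]$ consisting of series supported on integers $n$ coprime to $p$.

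The key observation is that on the $p$-depleted subspace, any continuous character $\kappa \colon \Zp^\times \to A^\times$ (with $A$ a Banach $\Qp$-algebra) induces a well-defined \emph{twisting operator}
\[ \theta^\kappa \colon \sum_{p \nmid n} c_n q^n \longmapsto \sum_{p \nmid n} \kappa(n) c_n q^n, \]
since the condition $p \nmid n$ forces $n \in \Zp^\times$ and the coefficients $\kappa(n) c_n$ are bounded. Taking $A = \cO(U)$ for any affinoid $U \subset \cE_\cK(\fN, \veps)$ and letting $\kappa_U$ be the continuous character $n \mapsto n^{-1}\mathbf{b}(n)^{-1}$, I define $\theta^{-(1+\mathbf{b})}\cF_{\mathrm{univ}}^\flat |_U$ by this formula.

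The main point requiring argument is that this formal twist actually arises from an element of the space of (families of) $p$-adic modular forms, and not merely from a formal $q$-expansion. Over the affinoid $U$, the character $\kappa_U$ is a $p$-adic limit of integer characters $n \mapsto n^{s_i}$ with $s_i \in \ZZ$; for an integer exponent $s$, the operator $\theta^s$ preserves the space of $p$-depleted $p$-adic modular forms, since Serre's operator $\theta = q \tfrac{\mathrm{d}}{\mathrm{d}q}$ acts on Katz's space of $p$-adic modular forms and can be iterated (positively or negatively) on the $p$-depleted subspace. Since the space of families of $p$-adic modular forms with $\cO(U)$-coefficients is a closed subspace of $\cO(U)[[q]]$ (by the $q$-expansion principle on the ordinary locus), the limit $\theta^{\kappa_U}(\cF_{\mathrm{univ}}^\flat |_U)$ is again a $p$-adic modular form over $U$. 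Gluing over an affinoid cover produces the desired family on $\cE_\cK(\fN, \veps)$.

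The interpolation property is then immediate: at a crystalline point $(f,\alpha,\chi)$ with $\chi$ of $\infty$-type $(a,b)$, the character $\mathbf{b}$ specialises to $n \mapsto n^b$, so $\kappa_U$ specialises to $n \mapsto n^{-(1+b)}$, and the $q$-expansion evaluates to $\sum_{p \nmid n} n^{-(1+b)} a_n(f) q^n = \theta^{-(1+b)}(f^\flat)$. The principal obstacle is the closedness claim used in the previous paragraph; as an alternative formalism, one may invoke Katz's interpretation of $p$-adic modular forms as functions on the Igusa tower, under which $\theta^\kappa$ acquires a direct geometric meaning via the canonical unit-root splitting on the ordinary locus, making the assignment $\kappa \mapsto \theta^\kappa g$ manifestly a continuous map from characters of $\Zp^\times$ into the space of $p$-adic modular forms.
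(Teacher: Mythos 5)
Your argument is correct in spirit and essentially fleshes out what the paper merely cites (the references to Darmon--Rotger \S2.6 and the introduction to \S4 of Andreatta--Iovita's triple product paper contain exactly this $\theta^\kappa$-twist construction for $p$-depleted families). The paper's proof is one sentence: it notes that $\cF_{\mathrm{univ}}^\flat$ already exists as a family of $p$-adic modular forms and then quotes the general statement that $\theta^\lambda$ of a $U_p$-killed family is again a $p$-adic family of weight shifted by $2\lambda$. So you are proving the black box rather than invoking it, which is fine.

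Two small imprecisions in your write-up are worth flagging. First, the claim that the space of families of $p$-adic modular forms over an affinoid $U$ is a \emph{closed} subspace of $\cO(U)[[q]]$ does not follow from the $q$-expansion principle as you assert; the $q$-expansion principle gives injectivity of the $q$-expansion map, not closedness of its image. Closedness is automatic if one takes Serre's definition of (Katz) $p$-adic modular forms as the $p$-adic closure of spaces of classical forms, but you should say that explicitly rather than appeal to the $q$-expansion principle. Second, the statement that $\kappa_U$ is a ``$p$-adic limit of integer characters $n \mapsto n^{s_i}$ with $s_i \in \ZZ$'' is literally only true pointwise on $U$; since $\kappa_U$ is an $\cO(U)^\times$-valued character, the approximating exponents $s_i$ would have to be elements of $\cO(U)$ rather than integers, and one needs to check that the approximation is uniform over $U$ (which it is, because $\kappa_U$ restricted to $1 + p\Zp$ is locally analytic). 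Your fallback at the end --- the Igusa-tower / unit-root-splitting interpretation, under which the assignment $\kappa \mapsto \theta^\kappa g$ is manifestly a morphism from weight space into the space of $p$-adic forms --- sidesteps both issues cleanly, and is in fact the route the cited references take. I would lead with that rather than the limit argument.
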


  \begin{proof}
   Since we know that $\cF_{\mathrm{univ}}$ and hence $\cF_{\mathrm{univ}}^\flat$ exist as families of $p$-adic modular forms, it suffices to note the following general statement: if $X$ is a rigid space with two maps $\kappa, \lambda: X \to \cW$, and $\mathcal{G}$ is a family of $p$-adic modular forms over $X$ of weight $\kappa$ such that $U_p \cdot \mathcal{G} = 0$, then there exists a $p$-adic modular form $\theta^\lambda(\mathcal{G})$ of weight $\kappa + 2\lambda$ whose $q$-expansion is $\sum_{p \nmid n} a_n(\mathcal{G}) n^\lambda$. See e.g.~\cite[\S 2.6]{darmonrotger14}, or the introduction to \S 4 of \cite{ai-tripleprod}.
  \end{proof}

  \begin{rmk}
   Note that although $\cF_{\mathrm{univ}}$ and hence $\cF_{\mathrm{univ}}^\flat$ are families of overconvergent forms, $\theta^{-(1+\mathbf{b})}\cF_{\mathrm{univ}}^\flat$ is not overconvergent. For weights in a certain open subset in the ``centre'' of weight space, it can be interpreted as a family of \emph{nearly-overconvergent} forms in the sense of \cite{ai-tripleprod}. However, this is not needed for our present purposes.
  \end{rmk}

  \begin{theorem}
   \label{thm:B2}
   There is a rigid-analytic function $\BDP(\fN, \veps)$ on $\cE_{\cK}(\fN, \veps)$ with the following property: if $f_\alpha$ is a point on $\cE(N, \veps)$ corresponding to a $p$-stabilisation of a newform $f$ of prime-to-$p$ level and weight $k+2$, then the restriction of $\BDP(\fN, \veps)$ to the fibre of $\cE_{\cK}(\fN, \veps)$ over $f_\alpha$ is $\BDP(f)$.
  \end{theorem}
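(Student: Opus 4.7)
The plan is to construct $\BDP(\fN, \veps)$ directly by imitating the defining formula \eqref{eq:defBDP}, promoting each ingredient to its family-theoretic analogue over $\cE_{\cK}(\fN, \veps)$. Concretely, I would set
\[
\BDP(\fN, \veps) \coloneqq \sum_{[\mathfrak{c}] \in \operatorname{Cl}(\cK)} \kappa_{\tilde V}(\mathfrak{c})^{-1}\mathbf{N}(\mathfrak{c})^{\mathbf{b}}\,\bigl(\theta^{-(1+\mathbf{b})}\cF_{\mathrm{univ}}^\flat\bigr)\bigl(\mathfrak{c} \star (A, t, \omega_{\mathrm{can}})\bigr),
\]
where $\kappa_{\tilde V}$ is the universal character restricted to $\AA_{\cK,\f}^\times$, and the sum is over a fixed choice of representatives coprime to $p\fN$ for the (finite) class group.

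The key steps are as follows. First, the character-theoretic factor $\kappa_{\tilde V}(\mathfrak{c})^{-1}\mathbf{N}(\mathfrak{c})^{\mathbf{b}}$ is a rigid-analytic function on $\cE_{\cK}(\fN, \veps)$, since $\mathbf{b}$ is (by the proposition preceding the theorem), and $\kappa_{\tilde V}$ is by definition the universal character. Second, the preceding proposition gives us a family of $p$-adic modular forms $\theta^{-(1+\mathbf{b})}\cF_{\mathrm{univ}}^\flat$ over $\cE_{\cK}(\fN, \veps)$. Third, since $p$ splits in $\cK$, the triple $\mathfrak{c} \star (A, t, \omega_{\mathrm{can}})$ lies over $\Zpnh$ in the ordinary locus of the formal modular curve; evaluation of a family of $p$-adic modular forms at such an ordinary CM triple is a rigid-analytic function of the weight/family parameter (this is immediate from the description of $p$-adic modular forms as functions on the ordinary locus). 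Summing the finitely many terms produces a rigid-analytic function $\BDP(\fN, \veps)$.

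To verify the interpolation property, I would fix a $p$-stabilisation $f_\alpha$ of a newform $f$ of prime-to-$p$ level and restrict $\BDP(\fN, \veps)$ to the fibre of $\cE_{\cK}(\fN, \veps) \to \cE(N, \veps)$ over $f_\alpha$. On this fibre, $\cF_{\mathrm{univ}}$ specialises to $f_\alpha$, whose $p$-depletion is $f^\flat$ (independent of the choice of $p$-stabilisation), so $\theta^{-(1+\mathbf{b})}\cF_{\mathrm{univ}}^\flat$ specialises to $\theta^{-(1+\mathbf{b})}f^\flat$. The restricted function on the fibre therefore coincides, at crystalline characters of $\Sigma^{(2)}$ type, with $\BDP(f)$ via the defining formula \eqref{eq:defBDP}. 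Since crystalline characters are Zariski-dense in each fibre (as the fibre is an open subspace of $\cW_{\cK}(\fN, k, \veps)$), and both functions are rigid-analytic and bounded, they must agree identically.

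The main obstacle is ensuring a well-defined and well-behaved notion of ``evaluation at a CM triple'' for a family of $p$-adic modular forms over the (non-reduced, possibly singular) eigencurve $\cE(N, \veps)$, together with checking that the resulting value does not depend on the choice of representative $\mathfrak{c}$ modulo principal ideals. The latter is an instance of the standard computation in \cite{BDP}: replacing $\mathfrak{c}$ by $(\xi)\mathfrak{c}$ rescales the differential $\omega_{\mathrm{can}}$ by $\xi^{-1}$ and the level structure $t$ by $\xi$, and the resulting transformation of $\theta^{-(1+\mathbf{b})}f^\flat$ is precisely cancelled by the change in $\kappa_{\tilde V}(\mathfrak{c})^{-1}\mathbf{N}(\mathfrak{c})^{\mathbf{b}}$, uniformly in the family parameter since both the modularity property and the character identity are rigid-analytic. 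Granted these technical checks, the construction goes through.
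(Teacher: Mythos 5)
Your proposal is correct and matches the paper's proof essentially verbatim: both define $\BDP(\fN, \veps)$ by promoting each ingredient of the defining formula to its family version, and both deduce the interpolation property by restricting to a fibre. One small slip of notation: you write $\kappa_{\tilde V}$, but since the construction lives over all of $\cE_{\cK}(\fN, \veps)$ rather than the local piece $\tilde V$, you should use the global universal character $\kappa_{\mathrm{univ}} : \AA_{\cK, \f}^\times/\cK^\times \to \cO(\cW_{\cK}(\fN, \veps))^\times$, as the paper does.
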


  \begin{proof}
   We simply define $\BDP(\fN, \veps)$ to be the natural ``family version'' of \eqref{eq:defBDP}, i.e.~the sum
   \[\sum_{[\mathfrak{c}] \in \operatorname{Cl}(K)} \kappa_{\mathrm{univ}}(\mathfrak{c})^{-1}\mathbf{N}(\mathfrak{c})^{\mathbf{b}} \cdot  \theta^{-(1+\mathbf{b})}(\cF_{\mathrm{univ}}^\flat)\left( \mathfrak{c} \star (A, t, \omega_{\mathrm{can}})\right),
   \]
   where $\kappa_{\mathrm{univ}}: \AA_{\cK, \f}^\times/\cK^\times \to \cO(\cW_{\cK}(\fN, \veps))$ is the universal character. By construction, restricting this to the fibre above $f_\alpha$ gives back the BDP $p$-adic $L$-function above.
  \end{proof}

  \begin{rmk}
   This is in a sense the `wrong' setting for variation of BDP $L$-functions, for the following reason: a level $N$ newform $f$ of weight $\ge 2$ and character $\veps$ will have two $p$-stabilisations $f_\alpha$ and $f_\beta$, which are usually (and conjecturally always) distinct as points in $\cE(N, \veps)$; but the restrictions of $\BDP(\fN, \veps)$ to the fibres of $\cE_{\cK}(\fN, \veps)$ above these two points are the same, since both are equal to $\BDP(f)$, and this function is insensitive to choices of $p$-stabilisations.

   In a recent preprint of one of us \cite{DL-deformations}, assuming $N = 1$ and imposing some mild local assumptions on the mod $p$ Galois representation $\bar\rho$ of $f$, it is shown that $\BDP(f)$ extends to the product of $\cW^\ac$ and a 2-dimensional Galois deformation space $\mathcal{X}$. There is a natural map from the eigencurve $\cE$ to $\mathcal{X}$ (whose image is the ``infinite fern'' of Mazur), and the function $\BDP(\fN, \veps)$ above is simply the pullback of the ``universal'' BDP $L$-function on $\mathcal{X} \times \cW^\ac$ along this map. Note that $f_\alpha$ and $f_\beta$ map to the same point of $\mathcal{X}$, which explains the phenomenon mentioned above. However, for the constructions of the next section, the additional data carried by the eigencurve -- a choice of $p$-stabilisation, or equivalently a triangulation of the Galois representation -- is indispensable.
  \end{rmk}

\section{Explicit reciprocity laws}

  \label{sect:ERL}

 \subsection{Period isomorphisms in families}

  We now study the local triangulation $\wD_{\fp}$ of \S \ref{sect:triangulations} in the split-prime case, assuming $\fp$ is the prime corresponding to our embedding $\cK\into \Qb_p$ as before.

  If $\chi_{\mathrm{cyc}}$ denotes the cyclotomic character, then $\wD_{\fp}^+ \otimes \chi_{\mathrm{cyc}}^{-(1 + \mathbf{b})}$ is an unramified, and in particular crystalline, $(\varphi, \Gamma)$-module; we shall write
  \[ \wM^+_{\cris} \coloneqq \Dcris\left(\wD_{\fp}^+ \otimes \chi_{\mathrm{cyc}}^{-(1 + \mathbf{b})}\right),\]
  which is a locally-free rank 1 $\cO(\tilde V)$-module. By construction, for any crystalline point $(f, \alpha, \chi)$ of weight $(a, b)$, there is a canonical inclusion
  \[ (\wM^+_{\cris})_x \into \Dcris(\wM_x|_{G_{\cK_{\fp}}}) = \Dcris(V_p(f)^*(\chi) |_{G_{\cK_{\fp}}}). \]
  We can regard $\wM^+_{\cris}$ as the tensor product of two factors:
  \begin{itemize}
   \item the pullback to $\tilde{V}$ of the $(\varphi, \Gamma)$-module $\cD_p^+ \otimes \chi_{\mathrm{cyc}}^{-(\kappa_V+ 1)}$ over $V$;
   \item the $\cO(\tilde V)$-valued character $\kappa_{\tilde V} \cdot (\chi_{\mathrm{cyc}})^{\mathbf{a}}$, which is the restriction to the decomposition group at $\fp$ of a character of $\Gal(\cK^{\ab} / \cK)$ unramified outside $\fN\fpb$.
  \end{itemize}

  Theorem \ref{thm:ES} allows us to identify $\Dcris\left(\cD_p^+ \otimes\chi_{\mathrm{cyc}}^{-(\kappa_V+ 1)}\right)$ with $\cO(V)$, via the linear map $\omega_\cF'$. By base-extension to $\cO(\tilde V)$ we may consider this as an isomorphism $\Dcris\left(\wD_p^+ \otimes\chi_{\mathrm{cyc}}^{-(\mathbf{a} + \mathbf{b} + 1)}\right)\cong\cO(\tilde V)$. As for the second factor, since $\kappa_{\tilde V} \cdot (\chi_{\mathrm{cyc}})^{\mathbf{a}}$ is unramified at $\fp$, we have a canonical inclusion
  \[
   \Dcris\left(\kappa_{\tilde V} \cdot (\chi_{\mathrm{cyc}})^{\mathbf{a}}\right) \subset \cO(\tilde V) \htimes \Qpnh
  \]
  (where both sides are regarded as subsets of $\mathbf{B}_{\cris} \htimes \cO(\tilde V)$). Tensoring together these two constructions gives a \emph{period isomorphism}
  \begin{equation}
   \label{eq:ESp}
   \widetilde{\omega}_{\cF}:  \wM^+_{\cris} \xrightarrow{\ \ }\cO(\tilde V) \htimes \Zpnh.
  \end{equation}

  \begin{proposition}
   Let $x = (f, \alpha, \chi)$ be a classical crystalline point, with $\chi$ having $\infty$-type $(a,b)$. Then the specialisation $\widetilde{\omega}_{\cF, x}$ of $\widetilde{\omega}_{\cF}$ at $x$ takes values in $F_{\fP} \cdot \Omega_{\fp}^{(a-b)} \subset \Qpnh$, and the morphism
   \[ \frac{1}{\Omega_{\fp}^{(a-b)}} \widetilde{\omega}_{\cF, x}: \wM^+_{\cris, x} \to F_{\fP} \]
   is the restriction to $\wM^+_{\cris, x}$ of the map $\Dcris\left(V_p(f)^*(\chi) |_{G_{\cK_{\fp}}}\right) \to F_{\fP}$ given by $x \mapsto \langle x, \omega_f' \otimes \eta_A^a \omega_A^b \rangle$.
  \end{proposition}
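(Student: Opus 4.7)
My plan is to decompose $\widetilde{\omega}_{\cF}$ as a tensor product corresponding to the two natural directions of $\tilde V$, namely the eigencurve $V$ and the fibre of $\tilde V \to V$. Using the identity $\kappa_V = \mathbf{a} + \mathbf{b}$ on $\tilde V$, one rewrites
\[ \wD_\fp^+ \otimes \chi_{\mathrm{cyc}}^{-(1+\mathbf{b})} = \left(\cD_p^+ \otimes \chi_{\mathrm{cyc}}^{-(\kappa_V+1)}\right) \otimes \left(\kappa_{\tilde V} \cdot \chi_{\mathrm{cyc}}^{\mathbf{a}}\right). \]
The second factor is unramified at $\fp$ as a family of rank-$1$ $(\varphi,\Gamma)$-modules, so admits a canonical $\Dcris$-inclusion $\iota_{\mathrm{ch}} \colon \Dcris(\kappa_{\tilde V}\chi_{\mathrm{cyc}}^{\mathbf{a}}) \hookrightarrow \cO(\tilde V) \htimes \Qpnh$, and by construction $\widetilde{\omega}_{\cF} = \omega_{\cF}' \otimes \iota_{\mathrm{ch}}$.

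At the classical point $x$, Theorem \ref{thm:ES} handles the first factor: $\omega_{\cF}'$ specialises (at weight $k = a+b \in V$) to the composite $\Dcris(\cD^+_{p,k}) \hookrightarrow \Dcris(V_p(f)^*) \xrightarrow{\langle -, \omega_f'\rangle} L$. For the second factor, the restriction $(\kappa_{\tilde V} \chi_{\mathrm{cyc}}^{\mathbf{a}})_x|_{G_{F_m}}$ simplifies, using $\chi|_{G_{F_m}} = \sigma^a \bar\sigma^b \veps$ and $\sigma\bar\sigma = \chi_{\mathrm{cyc}}^{-1}$, to $\bar\sigma^{b-a}\cdot\veps$. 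Since $g(\Omega_\fp) = \bar\sigma(g)\,\Omega_\fp$ by \S\ref{sect:CMperiods}, the element $\Omega_\fp^{a-b} \in \mathbf{B}_{\cris}$ is a basis of $\Dcris(\bar\sigma^{b-a})$, so $\iota_{\mathrm{ch}, x}$ sends a natural basis of $\Dcris(\kappa_{\tilde V}\chi_{\mathrm{cyc}}^{\mathbf{a}})_x$ into $F_\fP \cdot \Omega_\fp^{a-b} \subseteq \Qpnh$, the $F_\fP$-scalar accounting for the finite-order $\veps$-twist.

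Assembling, $\widetilde{\omega}_{\cF, x}$ takes values in $F_\fP \cdot \Omega_\fp^{a-b}$, and $\Omega_\fp^{b-a} \cdot \widetilde{\omega}_{\cF, x}$ factors as the product of $\langle -, \omega_f'\rangle$ with an $F_\fP$-valued linear functional on $\Dcris(\chi|_{G_{\cK_\fp}})$. To identify this functional with the character-factor of pairing against $\eta_A^a \omega_A^b$, one invokes $\omega_A = \Omega_\fp\cdot t$ and $\eta_A = 1/\Omega_\fp$ from \S\ref{sect:CMperiods}: the product $\eta_A^a \omega_A^b = \Omega_\fp^{b-a} t^b$ has exactly the $\bar\sigma$-transformation behaviour needed to produce a $\chi^{-1}$-period of $\Omega_\fp^{b-a}$ inside $\mathbf{B}_{\cris}$, matching the character factor of $\widetilde{\omega}_{\cF, x}$ after division by $\Omega_\fp^{a-b}$.

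The main technical obstacle is careful book-keeping rather than any deep computation. One needs to verify that specialising the family inclusion $\iota_{\mathrm{ch}}$ at $x$ agrees with the canonical $\Dcris$-inclusion for the pointwise character (a base-change statement for families of unramified rank-$1$ $(\varphi,\Gamma)$-modules, standard but requiring a check in this setting), and to track the $\veps$-twist and the Gauss sum $G(\veps^{-1})$ implicit in $\omega_f'$ (Remark \ref{rmk:gausssum}) so that the final formula matches on the nose.
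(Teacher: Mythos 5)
Your proof follows the paper's own approach (which is stated only as ``unwind the constructions''), namely decomposing $\widetilde{\omega}_{\cF}$ as the tensor product $\omega'_{\cF} \otimes \iota_{\mathrm{ch}}$, specialising the first factor via Theorem \ref{thm:ES}, and identifying the second factor at $x$ with the $\bar\sigma^{b-a}$-period $\Omega_{\fp}^{a-b}$. One small slip: on $\Gal(\cK^{\ab}/F_m)$ the Galois character of $\chi$ restricts to $\sigma^a\bar\sigma^b$ with \emph{no} $\veps$-factor (the $\veps$ only appears on the larger group $\Gal(\cK^{\ab}/\cK_m)$ per \S\ref{sect:galreps}), so the ``$F_\fP$-scalar accounting for the $\veps$-twist'' is a red herring; similarly, the Gauss sum $G(\veps^{-1})$ need not be tracked in this proposition — it cancels since $\omega'_f$ appears on both sides, and it only enters the subsequent Corollary defining $\cL_{\fp,\mathrm{mot}}(\cF)$.
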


  \begin{proof}
   This follows readily by comparing the construction of $\widetilde{\omega}_{\cF}$ with the definition of $\Omega_{\fp}$ in the previous section.
  \end{proof}

 \subsection{The Perrin-Riou regulator at \texorpdfstring{$\fp$}{p}}

  \begin{proposition}
   There is a canonical homomorphism of $\cO(\tilde V)$-modules, the \emph{Perrin-Riou regulator},
   \[ \cL_{\fp, \wM}: H^1\left(\cK_{\fp}, \wD_{\fp}^+\right) \to  \wM^+_{\cris}, \]
  such that for any crystalline point $x = (f, \alpha, \chi)$ of $\tilde V$, there is a commutative diagram
   \[
    \begin{tikzcd}
     H^1(\cK_{\fp}, \wD^+_{\fp, x}) \dar[hook] \rar["\cL_{\fp, \wM, x}"] &\wM^+_{\cris, x}\dar[hook] \\
     H^1(\cK_{\fp}, \wM_x) \rar["\cE_\fp \cdot \spadesuit"] &\Dcris(\wM_x |_{G_{\cK_{\fp}}})
    \end{tikzcd}
   \]
   where $\cE_{\fp}$ is the Euler factor $\left(1 - \tfrac{\alpha}{p \chi(\fpb)}\right) \left(1 - \tfrac{\chi(\fpb)}{\alpha}\right)^{-1}$, and the map $\spadesuit$ is as follows:
   \begin{itemize}
    \item If $\chi$ has weight $(a, b)$ with $b \le -1$, then $\spadesuit = (-1-b)!  \exp^*$, where
    \[ \exp^*:  \frac{H^1(\cK_{\fp}, \wM_x)}{H^1_{\f}(\cK_{\fp}, \wM_x)} \xrightarrow{\ \cong\ } \operatorname{Fil}^0 \Dcris(\wM_x |_{G_{\cK_{\fp}}})  \]
    is the Bloch--Kato dual exponential map.

    \item If $b \ge 0$, then the image of the left vertical map is contained in $H^1_{\f}(\cK_{\fp},  \wM_x)$, and on this submodule $\spadesuit$ satisfies the relation
    \[  \spadesuit \bmod {\operatorname{Fil}^0} = \tfrac{(-1)^b}{b!} \log, \]
    where
    \[ \log: H^1_{\f}(\cK_{\fp}, \wM_x) \xrightarrow{\ \cong\ } \frac{\Dcris(\wM_x |_{G_{\cK_{\fp}}})}{\operatorname{Fil}^0}\]
    is the Bloch--Kato logarithm.
   \end{itemize}
  \end{proposition}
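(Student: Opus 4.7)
My plan is to adapt the Perrin--Riou regulator for Coleman families, constructed in \cite[\S 8]{loeffler-zerbes:coleman} for the cyclotomic direction, to the present 2-parameter setting by exploiting the observation that $\wD_\fp^+$ is, up to a cyclotomic Tate twist, an unramified rank-1 family over $\cR_{\cK_\fp} \htimes \cO(\tilde V)$. Concretely, $\cD_p^+|_{G_{\cK_\fp}}$ has $\Gamma$-character $\chi_{\mathrm{cyc}}^{1+\kappa_V}$ by Liu's theorem, and the restriction of $\kappa_{\tilde V}$ to the decomposition group at $\fp$ contributes, via the fibre product construction of $\cE_\cK(\fN,\veps)$ and the convention that $\infty$-type $(a,b)$ corresponds to Hodge--Tate weights $(-a,-b)$, a factor of Hodge--Tate weight $-\mathbf{a}$. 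Using $\kappa_V = \mathbf{a}+\mathbf{b}$ on $\tilde V$, the $(\varphi,\Gamma)$-module $\wD_\fp^+ \otimes \chi_{\mathrm{cyc}}^{-(1+\mathbf{b})}$ is then of Hodge--Tate weight $0$, hence is the $(\varphi,\Gamma)$-module associated to an unramified character $\psi: G_{\cK_\fp} \to \cO(\tilde V)^\times$ whose $\varphi$-eigenvalue is a unit computable from Liu's formula and the explicit description of $\kappa_{\tilde V}$.

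For such an unramified family the Perrin--Riou regulator admits a particularly clean description: there is a natural $\cO(\tilde V)$-linear map
\[ H^1(\cK_\fp, \cR_{\cK_\fp}(\psi)) \longrightarrow \Dcris(\cR_{\cK_\fp}(\psi)) = \wM^+_{\cris}, \]
whose specialisations at classical points, up to the Euler factor $(1-\varphi)(1-p^{-1}\varphi^{-1})^{-1}$ on $\Dcris$, are the inverses of the Bloch--Kato maps (the logarithm or dual exponential, according to the sign of the Hodge--Tate weight); compare \cite[Theorem~8.2.3, Prop.~8.2.4]{loeffler-zerbes:coleman} or the local $(\varphi,\Gamma)$-module theory of Nakamura. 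Conjugating this morphism by the Tate twist $\chi_{\mathrm{cyc}}^{1+\mathbf{b}}$ and composing with the identification $\wM^+_{\cris} = \Dcris(\wD_\fp^+ \otimes \chi_{\mathrm{cyc}}^{-(1+\mathbf{b})})$ then yields the desired $\cL_{\fp, \wM}$. The twist introduces the factorials $(-1-b)!$ in the dual-exponential regime $b\le-1$ and $(-1)^b/b!$ in the logarithm regime $b\ge 0$, by the standard $\Gamma$-equivariance of Perrin--Riou's construction under cyclotomic twist; and specialising $(1-\varphi)(1-p^{-1}\varphi^{-1})^{-1}$ for $\psi$ at a crystalline classical point $(f,\alpha,\chi)$ produces precisely the Euler factor $\cE_\fp$ of the proposition.

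The main obstacle will be the careful tracking of conventions: (i) the decomposition $\kappa_{\tilde V}|_{G_{\cK_\fp}} = \chi_{\mathrm{cyc}}^{-\mathbf{a}} \cdot \psi'$ with $\psi'$ unramified, and the explicit Frobenius eigenvalue of $\psi'$; (ii) the compatibility of $\Dcris$ with Tate twists under the choice of period basis $\zeta_{p^n} = \exp(2\pi i/p^n)$ fixed after Theorem~\ref{thm:ES}; (iii) the relation between Liu's $\varphi$-eigenvalue $\veps(p)^{-1}a_p(\cF)$ for $\cD_p^+$ and the classical $U_p$-eigenvalue $\alpha$ at a specialisation; and (iv) the appearance of $\chi(\fpb)$ rather than $\chi(\fp)$ in the Euler factor, which traces back to our choice of embedding $\cK\hookrightarrow\Qb_p$ and the normalisation of the Artin map sending uniformisers to geometric Frobenius. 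Once these conventions are pinned down, the final computation of $\cE_\fp$ and of the combinatorial constants is a routine application of the Nakamura/Perrin--Riou explicit reciprocity law for rank-1 crystalline $(\varphi,\Gamma)$-modules.
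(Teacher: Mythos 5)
Your overall strategy agrees with the paper's: both appeal to the machinery of \cite[\S 6--8]{loeffler-zerbes:coleman} and Nakamura's $(\varphi,\Gamma)$-module versions of $\log$ and $\exp^*$, and both exploit the fact that $\wD_\fp^+$ becomes unramified after the twist by $\chi_{\mathrm{cyc}}^{-(1+\mathbf{b})}$ (indeed your weight bookkeeping --- $\kappa_V = \mathbf{a}+\mathbf{b}$ on $\tilde V$, $\kappa_{\tilde V}|_{G_{\cK_\fp}}$ of Hodge--Tate weight $-\mathbf{a}$, $\cD_p^+$ of $\Gamma$-character $\chi_{\mathrm{cyc}}^{1+\kappa_V}$ --- exactly reproduces the computation the paper uses when defining $\wM^+_{\cris}$). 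The paper's proof is terser: it simply defines $\cL_{\fp,\wM}$ as $(1-\varphi)$ on $\wD_\fp^+$ composed with the Mellin transform identifying $\ker\psi$ with $\Dcris$-valued distributions, and invokes Nakamura for the specialisation statement.

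The one step of yours that does not survive scrutiny as written is the phrase ``conjugating this morphism by the Tate twist $\chi_{\mathrm{cyc}}^{1+\mathbf{b}}$.'' For a fixed $(\varphi,\Gamma)$-module $D$ and a non-zero integer $j$, $H^1(\cK_\fp, D)$ and $H^1(\cK_\fp, D(j))$ are different $\cO(\tilde V)$-modules with no canonical identification, so one cannot transport a map $H^1(\cK_\fp, \cR(\psi)) \to \Dcris(\cR(\psi))$ to a map $H^1(\cK_\fp, \wD_\fp^+) \to \wM^+_{\cris}$ by ``conjugation''; and here the exponent $1+\mathbf{b}$ is not even an integer but an analytic function on $\tilde V$. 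What makes the argument work --- and what the paper's formulation handles automatically --- is that the variable $\mathbf{b}$ is itself one of the coordinates on $\tilde V$: the shift $\mathbf{b} \mapsto \mathbf{b}+1$ is an automorphism of (a quotient of) $\tilde V$, and the Mellin-transform/distributional description of $(\wD_\fp^+)^{\psi=0}$ already lives over $\tilde V$, so the twist is built in rather than applied post hoc. If you want to retain your ``unramify first, then twist back'' presentation, you should replace the conjugation step by an explicit appeal to the Iwasawa-twist isomorphism in the form of a shift of the $\mathbf{b}$-coordinate, or better, just construct the map directly on $\wD_\fp^+$ as $(1-\varphi)$ followed by Mellin, as the paper does. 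A minor additional point: the map interpolates the Bloch--Kato $\log$ and $\exp^*$ themselves (up to the stated $\Gamma$-factors and Euler factor), not their ``inverses''; the direction $H^1 \to \Dcris$ is the big logarithm/regulator, and the big exponential goes the other way.
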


  \begin{proof}
   Exactly as in the case of Rankin--Selberg convolutions treated in \cite[\S 6--7]{loeffler-zerbes:coleman}, the map $\cL_{\fp, \wM}$ is defined simply to be the composite of $(1 - \varphi)$ on the $(\varphi, \Gamma)$-module with the Mellin transform, identifying the kernel of $\psi$ with $\Dcris$-valued distributions. The content of the theorem is that this map is compatible under specialisation with the classical logarithm and exponential maps; this follows from Nakamura's construction of $\exp^*$ and $\log$ for $(\varphi, \Gamma)$-modules \cite{nakamura}.
  \end{proof}

  \begin{rmk}
   Note that the Euler factor $\cE_{\fp}$ is well-defined and non-zero, since the genericity property established in Proposition \ref{prop:bkselmer} shows that $\left(1 - \tfrac{\alpha}{p \chi(\fpb)}\right)$ and $\left(1 - \tfrac{\chi(\fpb)}{\alpha}\right)$ are both non-vanishing. This genericity property also implies that $H^1_{\mathrm{e}} = H^1_{\f}$, so the Bloch--Kato logarithm is defined on $H^1_{\f}$. The interpolating property at non-crystalline classical points can be made explicit, but we do not need this here.
  \end{rmk}

  \begin{corollary}
   The element
   \[ \cL_{\fp, \mathrm{mot}}(\cF) \coloneqq \frac{(-1)^{\mathbf{b}}}{G(\veps^{-1})} \left\langle\cL_{\fp, \wM}\left(\loc_\fp z_{\cF}\right), \widetilde{\omega}_{\cF}  \right\rangle \in \cO(\tilde V) \]
   satisfies
   \[ \frac{1}{\Omega_{\fp}^{(a-b)}} \cL_{\fp, \mathrm{mot}}(\cF)(x) = \frac{a!}{(a + b)! G(\veps^{-1})} \left(1 - \tfrac{\chi(\fp)}{\alpha}\right) \left(1 - \tfrac{\chi(\fp)}{\beta}\right) \left\langle \log(\loc_\fp z_{\et}^{[f ,\chi]}), \omega_f' \wedge \omega_A^a \eta_A^b\right\rangle \]
   for any classical crystalline point $x = (f, \alpha, \chi)$ with $\chi$ of weight $(a, b)$ such that $a, b \ge 0$.
  \end{corollary}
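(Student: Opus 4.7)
The plan is to specialise the defining expression for $\cL_{\fp,\mathrm{mot}}(\cF)$ at the classical crystalline point $x = (f,\alpha,\chi)$ and then combine the three interpolation results already established: Theorem~A for $z_\cF$, the specialisation formula for the Perrin-Riou regulator $\cL_{\fp,\wM}$, and the specialisation formula for the period map $\widetilde\omega_\cF$.

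First I would apply the interpolation of $\widetilde\omega_\cF$ to factor out $\Omega_\fp^{a-b}$, so that it suffices to compute the pairing of $\cL_{\fp,\wM,x}(\loc_\fp z_{\cF,x})$ against $\omega_f'\otimes\eta_A^a\omega_A^b$ inside $\Dcris(\wM_x|_{G_{\cK_\fp}})$. Because $\loc_\fp z_\cF$ lifts canonically through $H^1(\cK_\fp,\wD_\fp^+)\hookrightarrow H^1(\cK_\fp,\wD_\fp)$ by Theorem~\ref{thm:selmer2}, the Perrin-Riou interpolation proposition applies in the $b\ge 0$ regime: it replaces $\cL_{\fp,\wM,x}(\loc_\fp z_{\cF,x})$, viewed inside $\Dcris(\wM_x|_{G_{\cK_\fp}})$, by $\cE_\fp\cdot\spadesuit(\loc_\fp z_{\cF,x})$, and $\spadesuit$ reduces modulo $\operatorname{Fil}^0$ to $\tfrac{(-1)^b}{b!}\log$. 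The quotient modulo $\operatorname{Fil}^0$ is harmless because $\omega_f'\otimes\eta_A^a\omega_A^b$ lies in the orthogonal complement of $\operatorname{Fil}^0$ under the de Rham pairing (this is where the conditions $a,b\ge 0$ and ``generic'' from Proposition~\ref{prop:bkselmer} enter).

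Next I would invoke Theorem~A to substitute $z_{\cF,x}$ by the classical Heegner class. For a crystalline point the conductor of $\chi$ is prime to $p$, so $m = 0$ and (with $k = a+b$, $j = b$) the interpolation formula reads $z_{\cF,x} = \tfrac{\cE_p(f_\alpha,\chi)}{\binom{a+b}{a}}\, z_\et^{[f,\chi]}$. Collecting everything and multiplying by the outer $\tfrac{(-1)^b}{G(\veps^{-1})}$ in the definition of $\cL_{\fp,\mathrm{mot}}(\cF)$, the two signs $(-1)^b$ cancel, and the factorial identity $b!\binom{a+b}{a} = \tfrac{(a+b)!}{a!}$ assembles the numerical prefactor into $\tfrac{a!}{(a+b)!\, G(\veps^{-1})}$, exactly as required.

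The one genuinely delicate step -- and the main obstacle -- is the reconciliation of Euler factors. The computation produces the combined factor
\[ \cE_\fp\cdot\cE_p(f_\alpha,\chi) \;=\; \Bigl(1-\tfrac{\alpha}{p\chi(\fpb)}\Bigr)\Bigl(1-\tfrac{\chi(\fp)}{\alpha}\Bigr), \]
whereas the target formula displays $\bigl(1-\tfrac{\chi(\fp)}{\alpha}\bigr)\bigl(1-\tfrac{\chi(\fp)}{\beta}\bigr)$. Reconciling these amounts to the identity $\tfrac{\alpha}{p\chi(\fpb)} = \tfrac{\chi(\fp)}{\beta}$, or equivalently $\chi(\fp)\chi(\fpb) = p^{a+b}\veps(p)$ in view of $\alpha\beta = p^{a+b+1}\veps(p)$. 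The latter follows by factoring the finite idele associated with $p \in \cK^\times$ as $\varpi_\fp\cdot\varpi_\fpb$ times units at the primes dividing $\fN$, then using the $\infty$-type relation $\chi(p_f) = p^{a+b}$ together with the prescribed behaviour $\chi|_{\widehat\OK^\times} = \veps^{-1}$, which contributes exactly $\veps(p)^{-1}$ at the ramified places. Once this identity is in place, the remaining checks are formal.
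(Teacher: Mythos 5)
Your proposal is correct and takes essentially the same route as the paper: the paper's proof is a one-sentence summary ("compare the interpolation of $z_{\cF}$ from Theorem~A with the interpolation of $\cL_{\fp,\wM}$; two of the four Euler factors cancel"), and you have simply filled in the details. Your explicit verification of $\chi(\fp)\chi(\fpb)=p^{a+b}\veps(p)$ -- needed to rewrite the surviving factor $\bigl(1-\tfrac{\alpha}{p\chi(\fpb)}\bigr)$ as $\bigl(1-\tfrac{\chi(\fp)}{\beta}\bigr)$ -- is a point the paper leaves tacit, and your bookkeeping of the sign $(-1)^b$, the factor $\frac{1}{b!\binom{a+b}{b}}=\frac{a!}{(a+b)!}$, and the specialisation $m=0$, $\alpha^m=1$ at crystalline points all check out.
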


  \begin{proof}
   This follows by comparing the interpolation property of the \'etale class $z_{\cF}$ from Theorem A with the interpolation property of the map $\cL_{\fp, \wM}$: two of the four Euler factors cancel out, and we obtain the result stated.
  \end{proof}

  \begin{theorem}[Theorem B]
   The motivic $p$-adic $L$-function $\cL_{\fp, \mathrm{mot}}(z_{\cF})$ coincides with the restriction to $\tilde V \subset \cE_\cK(\fN, \veps)$ of the $p$-adic $L$-function $\BDP(\fN, \veps)$ of Theorem \ref{thm:B2}.
  \end{theorem}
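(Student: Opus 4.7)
The plan is a density/analytic continuation argument. Both $\cL_{\fp, \mathrm{mot}}(z_{\cF})$ and the restriction $\BDP(\fN, \veps)|_{\tilde V}$ are rigid-analytic functions on the two-dimensional rigid space $\tilde V$, so it suffices to exhibit a Zariski-dense subset of $\tilde V$ on which they agree pointwise. The natural candidate is the set
\[
S = \bigl\{\,x = (f, \alpha, \chi) \in \tilde V \,:\, (f,\alpha,\chi) \text{ is a $p$-stabilised Heegner pair with $\chi$ crystalline}\,\bigr\}.
\]
These are the classical points of weight $(a,b)$ with $a,b \ge 0$ and $a+b = k$, for $k$ varying over non-negative integers in the (open) image of $V$ in $\cW$. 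For each such $k$ we obtain $k+1$ points, and as $k$ varies this produces a ``triangular'' pattern of integer points inside $\tilde V$ which cannot be contained in the zero locus of any non-zero rigid-analytic function; hence $S$ is Zariski-dense.

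Next I would evaluate both sides on $S$. For the motivic side, the Corollary at the end of the previous subsection (which is itself the combination of the interpolation property of $z_\cF$ from Theorem A with the interpolation property of $\cL_{\fp, \wM}$, with two of the four Euler factors cancelling) gives, for $x = (f,\alpha,\chi) \in S$ of weight $(a,b)$,
\[
\frac{1}{\Omega_{\fp}^{a-b}} \cL_{\fp, \mathrm{mot}}(\cF)(x)
= \frac{a!}{(a+b)!\, G(\veps^{-1})} \Bigl(1 - \tfrac{\chi(\fp)}{\alpha}\Bigr)\Bigl(1 - \tfrac{\chi(\fp)}{\beta}\Bigr) \bigl\langle \log(\loc_\fp z_{\et}^{[f,\chi]}),\, \omega_f' \wedge \omega_A^a \eta_A^b\bigr\rangle.
\]
For the analytic side, by construction of $\BDP(\fN,\veps)$ (Theorem \ref{thm:B2}) the restriction to the fibre above $(f,\alpha)$ agrees with the one-variable $\BDP(f)$. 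Its value at the Heegner character $\chi$ is precisely the object computed by the original explicit reciprocity law of Bertolini--Darmon--Prasanna, recalled as Theorem \ref{thm:explrecip}:
\[
\frac{\BDP(f)(\chi)}{\Omega_{\fp}^{a-b}} = \frac{1}{b!\,\binom{a+b}{a}\,G(\veps^{-1})} \bigl\langle \log(\loc_\fp z_{\et}^{[f,\chi]}),\, \omega_f' \wedge \omega_A^b \eta_A^a\bigr\rangle.
\]
Using $\frac{a!}{(a+b)!} = \frac{1}{b!\binom{a+b}{a}}$, the scalar prefactors agree. Once the two de Rham pairings are identified (which is a matter of comparing our conventions for the projection onto $h^{(a,b)}(A)$ with those of BDP), the two expressions coincide on $S$, and Zariski density gives the identity on all of $\tilde V$.

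The only place where genuine work is hidden is the match of the Euler factors: the Perrin-Riou regulator contributes $\bigl(1 - \tfrac{\alpha}{p\chi(\fpb)}\bigr)\bigl(1 - \tfrac{\chi(\fpb)}{\alpha}\bigr)^{-1}$, whereas Theorem A contributes $\cE_p(f_\alpha,\chi) = \bigl(1 - \tfrac{\chi(\fp)}{\alpha}\bigr)\bigl(1 - \tfrac{\chi(\fpb)}{\alpha}\bigr)$. After the cancellation of the $\bigl(1 - \tfrac{\chi(\fpb)}{\alpha}\bigr)$ factors, the remaining factor $\bigl(1 - \tfrac{\alpha}{p\chi(\fpb)}\bigr)$ must be rewritten as $\bigl(1 - \tfrac{\chi(\fp)}{\beta}\bigr)$ using $\alpha\beta = p^{k+1}\veps(p)$ and $\chi(\fp)\chi(\fpb) = \chi(p) = \veps(p)^{-1} p^{k+1}$ for crystalline $\chi$ with $a+b = k$. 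I expect this Euler-factor bookkeeping, together with the careful identification of the differential pairings on the two sides, to be the main technical obstacle; all the conceptually deep inputs (Theorem A, the Perrin-Riou regulator, and BDP's Theorem 5.13) are already in place.
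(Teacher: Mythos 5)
Your proposal follows exactly the strategy of the paper: reduce by Zariski density to crystalline classical points of $\tilde V$ with $a,b\ge 0$, then compare the explicit formulae coming from the Corollary on $\cL_{\fp,\mathrm{mot}}(\cF)$ (which packages Theorem A together with the Perrin--Riou regulator) and from the BDP explicit reciprocity law (Theorem \ref{thm:explrecip}). One arithmetic slip in your Euler-factor bookkeeping: you assert $\chi(\fp)\chi(\fpb) = \veps(p)^{-1}p^{k+1}$, but the correct relation, which follows from the conjugate-self-duality condition $\chi\cdot\chi^\tau = \veps\cdot\chi_{\mathrm{cyc}}^{-k}$ on $G_\cK$, is $\chi(\fp)\chi(\fpb) = \veps(p)\,p^{k}$. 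This is exactly what gives $\alpha\beta = p\,\chi(\fp)\chi(\fpb)$, and hence lets one rewrite $\bigl(1 - \tfrac{\alpha}{p\chi(\fpb)}\bigr)$ as $\bigl(1 - \tfrac{\chi(\fp)}{\beta}\bigr)$; with your value the two sides would differ by a factor $p/\veps(p)^2$ and the cancellation would fail. That said, this cancellation is the content of the Corollary (which the theorem simply cites), so the slip does not compromise the logical structure of your argument. Your instinct to flag the exponent-matching $\omega_A^a\eta_A^b$ versus $\omega_A^b\eta_A^a$ between the Corollary and Theorem \ref{thm:explrecip} is also well placed; the paper itself is not entirely consistent about these.
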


  \begin{proof}
   Let $x = (f, \alpha, \chi)$ be a classical crystalline point of $\infty$-type $(a, b)$, with $a, b \ge 0$. By Theorem \ref{thm:B2}, the value of $\BDP(\fN, \veps)$ at $x$ is $\BDP(f)(\chi)$; and by Theorem \ref{thm:explrecip}, this is equal to
   \[  \Omega_{\fp}^{(a-b)} \cdot \frac{a!}{(a + b)! G(\veps^{-1})} \left(1 - \tfrac{\chi(\fp)}{\delta}\right) \left(1 - \tfrac{\chi(\fp)}{\beta}\right) \left\langle \log(\loc_\fp z_{\et}^{[f ,\chi]}), \omega_f' \wedge \omega_A^a \eta_A^b\right\rangle.\]
   This is exactly the formula we have just obtained for $\cL_{\fp, \mathrm{mot}}(\cF)(x)$. Since crystalline points with $a, b \ge 0$ are Zariski-dense in $\tilde V$, the ``motivic'' $p$-adic $L$-function must be equal to the restriction of the ``analytic'' one $\BDP(\fN, \veps)$.
  \end{proof}
\providecommand{\bysame}{\leavevmode\hbox to3em{\hrulefill}\thinspace}
\providecommand{\MR}[1]{}
\renewcommand{\MR}[1]{%
 MR \href{http://www.ams.org/mathscinet-getitem?mr=#1}{#1}.
}
\providecommand{\href}[2]{#2}
\newcommand{\articlehref}[2]{\href{#1}{#2}}


\begin{thebibliography}{GGX19}

 \bibitem[Anc15]{ancona}
 G.~Ancona,
 \articlehref{http://dx.doi.org/10.1007/s00229-014-0708-4}{\emph{D{\'e}composition
   de motifs ab{\'e}liens}}, Manuscripta Math. \textbf{146} (2015), no.~3--4,
 307--328. \MR{3312448}

 \bibitem[AI17]{ai-tripleprod}
 F.~Andreatta and A.~Iovita,
 \articlehref{http://arxiv.org/abs/1708.02785}{\emph{Triple product p-adic
   l-functions associated to finite slope p-adic families of modular forms}},
 preprint, with an appendix by E. Urban, 2017, \path{arXiv:1708.02785}.

 \bibitem[AI19]{AI-katz}
 \bysame, \articlehref{http://arxiv.org/abs/1905.00792}{\emph{Katz type
   {$p$}-adic {$L$}-functions for primes {$p$} non-split in the {CM} field}},
 preprint, 2019, \path{arXiv:1905.00792}.

 \bibitem[AIS15]{andreatta-iovita-stevens:overconvES}
 F.~Andreatta, A.~Iovita, and G.~Stevens,
 \articlehref{http://dx.doi.org/10.1017/S1474748013000364}{\emph{Overconvergent
   {E}ichler--{S}himura isomorphisms}}, J. Inst. Math. Jussieu \textbf{14}
 (2015), no.~2, 221--274. \MR{3315057}

 \bibitem[BC09]{bellaichechenevier09}
 J.~Bella{\"i}che and G.~Chenevier,
 \articlehref{http://smf4.emath.fr/Publications/Asterisque/2009/324/html/smf_ast_324.php}{\emph{Families
   of {G}alois representations and {S}elmer groups}}, Ast{\'e}risque
 \textbf{324} (2009), 1--314. \MR{2656025}

 \bibitem[BDP13]{BDP}
 M.~Bertolini, H.~Darmon, and K.~Prasanna,
 \articlehref{http://dx.doi.org/10.1215/00127094-2142056}{\emph{Generalized
   {H}eegner cycles and {$p$}-adic {R}ankin {$L$}-series}}, Duke Math. J.
 \textbf{162} (2013), no.~6, 1033--1148. \MR{3053566}

 \bibitem[Bra11]{brakocevic11}
 M.~Brako\v{c}evi\'c,
 \articlehref{http://dx.doi.org/10.1093/imrn/rnq275}{\emph{Anticyclotomic
   {$p$}-adic {$L$}-function of central critical {R}ankin--{S}elberg
   {$L$}-value}}, Int. Math. Res. Notices \textbf{2011} (2011), no.~21,
 4967--5018. \MR{2852303}

 \bibitem[BCK19]{BCK}
 A.~Burungale, F.~Castella, and C.-H. Kim,
 \articlehref{http://arxiv.org/abs/1908.09512}{\emph{A proof of
   {P}errin--{R}iou's {H}eegner point main conjecture}}, preprint, 2019,
 \path{arXiv:1908.09512}.

 \bibitem[BL19]{BL19}
 K.~B\"uy\"ukboduk and A.~Lei,
 \articlehref{http://arxiv.org/abs/1907.04086}{\emph{Interpolation of
   generalized {H}eegner cycles in {C}oleman families}}, preprint, 2019,
 \path{arXiv:1907.04086}.

 \bibitem[Buz07]{buzzard}
 K.~Buzzard,
 \articlehref{http://dx.doi.org/10.1017/CBO9780511721267.004}{\emph{Eigenvarieties}},
 {$L$}-functions and {G}alois representations ({D}urham, 2004), LMS Lecture
 Notes, vol. 320, Cambridge Univ. Press, 2007, pp.~59--120. \MR{2392353}

 \bibitem[Cas19]{castella:padicvar}
 F.~Castella, \articlehref{http://dx.doi.org/10.1017/S1474748019000094}{\emph{On
   the {$p$}-adic variation of {H}eegner points}}, J. Inst. Math. Jussieu
 (2019), in press.

 \bibitem[CH18]{castellahsieh}
 F.~Castella and M.-L. Hsieh,
 \articlehref{http://dx.doi.org/10.1007/s00208-017-1517-3}{\emph{Heegner
   cycles and {$p$}-adic {$L$}-functions}}, Math. Ann. \textbf{370} (2018),
 no.~1-2, 567--628. \MR{3747496}

 \bibitem[Col97]{coleman97}
 R.~Coleman,
 \articlehref{http://dx.doi.org/10.1007/s002220050127}{\emph{{$p$}-adic
   {B}anach spaces and families of modular forms}}, Invent. Math. \textbf{127}
 (1997), no.~3, 417--479. \MR{1431135}

 \bibitem[CE98]{colemanedixhoven98}
 R.~Coleman and B.~Edixhoven,
 \articlehref{http://dx.doi.org/10.1007/s002080050140}{\emph{On the
   semi-simplicity of the {$U_p$}-operator on modular forms}}, Math. Ann.
 \textbf{310} (1998), no.~1, 119--127. \MR{1600034}

 \bibitem[CM98]{coleman-mazur}
 R.~Coleman and B.~Mazur,
 \articlehref{http://dx.doi.org/10.1017/CBO9780511662010.003}{\emph{The
   eigencurve}}, Galois representations in arithmetic algebraic geometry
 ({D}urham, 1996), London Math. Soc. Lecture Note Ser., vol. 254, Cambridge
 Univ. Press, 1998, pp.~1--113. \MR{1696469}

 \bibitem[DR14]{darmonrotger14}
 H.~Darmon and V.~Rotger,
 \articlehref{http://dx.doi.org/10.24033/asens.2227}{\emph{Diagonal cycles and
   {E}uler systems {I}: a {$p$}-adic {G}ross--{Z}agier formula}}, Ann. Sci.
 \'Ecole Norm. Sup. (4) \textbf{47} (2014), no.~4, 779--832. \MR{3250064}

 \bibitem[Dis19]{disegni}
 D.~Disegni, \emph{The universal {$p$}-adic {G}ross--{Z}agier formula},
 preprint, available from
 \url{https://disegni-daniel.perso.math.cnrs.fr/univ.pdf}, 2019.

 \bibitem[GGX19]{arizonafolks}
 A.~Graham, D.~Gulotta, and Y.~Xu,
 \articlehref{http://arxiv.org/abs/1905.08002}{\emph{Bounding {S}elmer groups
   for the {R}ankin--{S}elberg convolution of {C}oleman families}}, Canad. J.
 Math. (2019), to appear, \path{arXiv:1905.08002}.

 \bibitem[GZ86]{gross-zagier}
 B.~Gross and D.~Zagier,
 \articlehref{http://dx.doi.org/10.1007/BF01388809}{\emph{Heegner points and
   derivatives of \protect{${L}$}-series}}, Invent. Math. \textbf{84} (1986),
 no.~2, 225--320. \MR{0833192}

 \bibitem[Han15]{hansen}
 D.~Hansen, \articlehref{http://arxiv.org/abs/1508.03982}{\emph{Iwasawa theory
   of overconvergent modular forms {I}: {C}ritical {$p$}-adic {$L$}-functions}},
 preprint, 2015, \path{arXiv:1508.03982}.

 \bibitem[How07]{howard:variation}
 B.~Howard,
 \articlehref{https://0-doi-org.pugwash.lib.warwick.ac.uk/10.1007/s00222-
  006-0007-0}{\emph{Variation of {H}eegner points in {H}ida families}}, Invent.
 Math. \textbf{167} (2007), no.~1, 91--128. \MR{2264805}

 \bibitem[Kat04]{kato04}
 K.~Kato,
 \articlehref{http://smf4.emath.fr/en/Publications/Asterisque/2004/295/html/smf_ast_295_117-290.html}{\emph{{$P$}-adic
   {H}odge theory and values of zeta functions of modular forms}},
 Ast{\'e}risque \textbf{295} (2004), 117--290, Cohomologies $p$-adiques et
 applications arithm{\'e}tiques. III. \MR{2104361}

 \bibitem[KPX14]{KPX}
 K.~Kedlaya, J.~Pottharst, and L.~Xiao,
 \articlehref{http://dx.doi.org/10.1090/S0894-0347-2014-00794-3}{\emph{Cohomology
   of arithmetic families of {$(\varphi,\Gamma)$}-modules}}, J. Amer. Math. Soc.
 \textbf{27} (2014), no.~4, 1043--1115. \MR{3230818}

 \bibitem[Kin15]{kings15}
 G.~Kings,
 \articlehref{http://dx.doi.org/10.1017/CBO9781316163757.013}{\emph{Eisenstein
   classes, elliptic {S}oul{\'e} elements and the {$\ell$}-adic elliptic
   polylogarithm}}, The {B}loch--{K}ato conjecture for the {R}iemann zeta
 function (J.~Coates, A.~Raghuram, A.~Saikia, and R.~Sujatha, eds.), London
 Math. Soc. Lecture Note Ser., vol. 418, Cambridge Univ. Press, 2015.
 \MR{3497682}

 \bibitem[KLZ17]{KLZ2}
 G.~Kings, D.~Loeffler, and S.~L. Zerbes,
 \articlehref{http://dx.doi.org/10.4310/CJM.2017.v5.n1.a1}{\emph{{R}ankin--{E}isenstein
   classes and explicit reciprocity laws}}, Cambridge J. Math. \textbf{5}
 (2017), no.~1, 1--122. \MR{3637653}

 \bibitem[KLZ20]{KLZ1a}
 \bysame,
 \articlehref{http://dx.doi.org/10.1353/ajm.2020.0002}{\emph{{R}ankin--{E}isenstein
   classes for modular forms}}, Amer. J. Math. \textbf{142} (2020), no.~1,
 79--138.

 \bibitem[Kob18]{kobayashi18}
 S.~Kobayashi, \emph{A p-adic interpolation of generalized {H}eegner cycles and
  integral {P}errin-{R}iou twist}, preprint, available from
 \url{https://sites.google.com/view/shinichikobayashi}, 2018.

 \bibitem[Kol90]{kolyvagin:euler_systems}
 V.~A. Kolyvagin, \emph{Euler systems}, The Grothendieck Festschrift, Vol.\ II,
 Birkh\"auser, Boston, MA, 1990, pp.~435--483. \MR{1106906}

 \bibitem[Liu15]{liu15}
 R.~Liu, \articlehref{http://dx.doi.org/10.4171/cmh/372}{\emph{Triangulation of
   refined families}}, Comment. Math. Helv. \textbf{90} (2015), no.~4, 831--904.

 \bibitem[Loe20]{DL-deformations}
 D.~Loeffler, \articlehref{http://arxiv.org/abs/2003.13738}{\emph{P-adic
   {$L$}-functions in universal deformation families}}, preprint, 2020,
 \path{arXiv:2003.13738}.

 \bibitem[LSZ17]{LSZ:gsp4}
 D.~Loeffler, C.~Skinner, and S.~L. Zerbes,
 \articlehref{http://arxiv.org/abs/1706.00201}{\emph{Euler systems for
   {$\operatorname{GSp}(4)$}}}, preprint, 2017, \path{arXiv:1706.00201}.

 \bibitem[LZ16]{loeffler-zerbes:coleman}
 D.~Loeffler and S.~L. Zerbes,
 \articlehref{http://dx.doi.org/10.1186/s40687-016-0077-6}{\emph{Rankin--{E}isenstein
   classes in {C}oleman families}}, Res. Math. Sci. \textbf{3} (2016), no.~29.
 \MR{3552987}

 \bibitem[Nak14]{nakamura}
 K.~Nakamura,
 \articlehref{http://dx.doi.org/10.1017/S1474748013000078}{\emph{Iwasawa
   theory of de {R}ham {$(\varphi,\Gamma)$}-modules over the {R}obba ring}}, J.
 Inst. Math. Jussieu \textbf{13} (2014), no.~1, 65--118. \MR{3134016}

 \bibitem[Nek92]{nekovar:kolyvagin}
 J.~Nekov\'a\v{r},
 \articlehref{http://dx.doi.org/10.1007/BF01231883}{\emph{Kolyvagin's method
   for {C}how groups of {K}uga--{S}ato varieties}}, Invent. Math. \textbf{107}
 (1992), no.~1, 99--125. \MR{1135466}

 \bibitem[Ota20]{ota20}
 K.~Ota, \articlehref{http://dx.doi.org/10.1016/j.jnt.2019.08.005}{\emph{Big
   {H}eegner points and generalized {H}eegner cycles}}, J. Number Theory
 \textbf{208} (2020), 305--334. \MR{4032299}

 \bibitem[PR87]{perrinriou87}
 B.~Perrin-Riou,
 \articlehref{http://www.numdam.org/item?id=BSMF_1987__115__399_0}{\emph{Fonctions
   {$L$} {$p$}-adiques, th\'{e}orie d'{I}wasawa et points de {H}eegner}}, Bull.
 Soc. Math. France \textbf{115} (1987), no.~4, 399--456. \MR{928018}

 \bibitem[Pot13]{pottharst13}
 J.~Pottharst,
 \articlehref{http://dx.doi.org/10.2140/ant.2013.7.1571}{\emph{Analytic
   families of finite-slope {S}elmer groups}}, Algebra \& Number Theory
 \textbf{7} (2013), no.~7, 1571--1612. \MR{3117501}

 \bibitem[Sch86]{schoen1986}
 C.~Schoen,
 \articlehref{http://dx.doi.org/10.1215/S0012-7094-86-05343-3}{\emph{Complex
   multiplication cycles on elliptic modular threefolds}}, Duke Math. J.
 \textbf{53} (1986), no.~3, 771--794. \MR{0860672}

 \bibitem[Tor19]{torzewski19}
 A.~Torzewksi,
 \articlehref{http://dx.doi.org/10.1007/s00229-019-01150-9}{\emph{Functoriality
   of motivic lifts of the canonical construction}}, Manuscripta Math. (2019).

\end{thebibliography}
\end{document}